\newcommand{\C}{\mathbb{C}}
\newcommand{\R}{\mathbb{R}}
\newcommand{\N}{\mathbb{N}} 
\newcommand{\br}[1]{\left( #1 \right)}
\newcommand{\brs}[1]{\left[ #1 \right]}
\newcommand{\norm}[1]{\left\lVert #1 \right\rVert}
\newcommand{\abs}[1]{\left\lvert #1 \right\rvert}
\newcommand{\lb}[0]{\left\lbrace}
\newcommand{\rb}[0]{\right\rbrace}
\def\Xint#1{\mathchoice
   {\XXint\displaystyle\textstyle{#1}}%
   {\XXint\textstyle\scriptstyle{#1}}%
   {\XXint\scriptstyle\scriptscriptstyle{#1}}%
   {\XXint\scriptscriptstyle\scriptscriptstyle{#1}}%
   \!\int}
\def\XXint#1#2#3{{\setbox0=\hbox{$#1{#2#3}{\int}$}
     \vcenter{\hbox{$#2#3$}}\kern-.5\wd0}}
\def\ddashint{\Xint=}
\def\dashint{\Xint-}
\newcommand{\dyadicM}{ \mathcal{M}^{\mathscr{D}}} %
\newcommand{\1}{ \mathbbm{1}}   %
\newcommand{\D}{ \, \mathrm{d}} %
\renewcommand{\roman}[1]{%
  \textup{\uppercase\expandafter{\romannumeral#1}}%
}
\providecommand{\fint}{\dashint} %
\theoremstyle{plain}
\newtheorem{thm}{Theorem}[section]  %
\newtheorem{prop}[thm]{Proposition}      %
\newtheorem{lem}[thm]{Lemma}
\newtheorem{cor}[thm]{Corollary}
\theoremstyle{plain} %
\newtheorem{lemma}[thm]{Lemma} 
\newtheorem{proposition}[thm]{Proposition} 
\newtheorem{theorem}[thm]{Theorem} 
\newtheorem{corollary}[thm]{Corollary}
\theoremstyle{definition}
\newtheorem{deff}[thm]{Definition}
\newtheorem{assumption}[thm]{Assumption}
\newtheorem{define}[thm]{Definition}
\theoremstyle{remark}
\newtheorem{rmk}[thm]{Remark}
\newtheorem{remark}[thm]{Remark}
\numberwithin{equation}{subsection} %
\title[Quadratic sparse domination for Square Functions]
{Quadratic sparse domination and Weighted Estimates for non-integral Square Functions}
\author[Bailey]{Julian Bailey}
\address{Julian Bailey:
 School of Mathematics\\
  University of Birmingham\\
  Edgbaston\\
  Birmingham\\
  B15 2TT\\
  England}
\email{julianbailey230@gmail.com}
\author[Brocchi]{Gianmarco Brocchi}
\address{
  Gianmarco Brocchi:
  School of Mathematics\\
  University of Birmingham\\
  Edgbaston\\
  Birmingham\\
  B15 2TT\\
  England}
\email{G.Brocchi@pgr.bham.ac.uk}
\author[Reguera]{Maria Carmen Reguera}
\address{
  Maria Carmen Reguera:
  School of Mathematics\\
  University of Birmingham\\
  Edgbaston\\
  Birmingham\\
  B15 2TT\\
  England}
\email{M.Reguera@bham.ac.uk}
\thanks{The first and third authors are supported by
  the UK Engineering and Physical Sciences Research Council (EPSRC) grant EP/P009239/1.
  The second author is supported by
  the (EPSRC) grant  EP/L016516/1
  for the University of Birmingham.} %
\date{}
\begin{document}
\subjclass[2010]{42B20, 42B37}

\keywords{Elliptic operator, Sparse bounds, Sharp weighted estimates,
  Square functions}

\begin{abstract}
  We prove a quadratic sparse domination result 
  for general non-integral square functions $S$. That is, we
  prove an estimate of the form
  \begin{equation*}
    \int_{M} (S f)^{2} g \D{\mu} \le c \sum_{P \in \mathcal{S}}
    \br{\dashint_{5 P} \abs{f}^{p_{0}} \D{\mu}}^{2/p_{0}}
    \br{\dashint_{5 P} \abs{g}^{q_{0}^*}\D{\mu}}^{1/q_{0}^*}
    \abs{P},
  \end{equation*}
  where $q_{0}^{*}$ is the H\"{o}lder conjugate of $q_{0}/2$,
  $M$ is the underlying doubling space and $\mathcal{S}$ is a
  sparse collection of cubes on $M$. Our result will cover both square functions associated with
  divergence form elliptic operators and those associated with the
  Laplace--Beltrami operator.
  This sparse domination allows us to derive optimal norm estimates in the weighted space $L^{p}(w)$.
\end{abstract}

\maketitle

\section{Introduction}
\label{sec:Introduction}
 Recent years have seen a surge of activity in the weighted theory of singular integrals that has resulted in the resolution of some major conjectures such as the $A_2$ Conjecture \cite{MR2912709}, the Muckenhoupt--Wheeden Conjecture \cite{MR2799801} and the resolution of the two weight problem for the Hilbert transform \cites{MR3285858, MR3285857}. Accompanying these achievements is the development of new core techniques such as the representation of singular integrals by dyadic operators \cite{MR2912709} or the sparse domination of singular integrals \cites{MR3521084, MR4007575}. 

The sparse domination of an operator provides, at a glance, a complete
picture of the unweighted and weighted estimates with precise tracking
of the dependence on the weight characteristic.
Its use in harmonic analysis
was introduced by Lerner in \cite{MR2721744}, where a decomposition of
an arbitrary measurable function was obtained in terms of its local
mean oscillations.
It has since been extended to
a great number of different contexts spanning and reproducing a large
portion of harmonic analysis. To call attention to some of the most celebrated results, there have
been articles published
covering the domination of Calder\'{o}n--Zygmund singular integral operators
\cites{MR3085756,MR3625108,MR3484688,MR4007575}, multilinear singular integrals, rough
singular integrals, variational Carleson, Bochner--Riesz multipliers,
Walsh--Fourier multipliers, spherical maximal function and also the $T1$ sparse
domination of singular integrals. For more details on these and other
applications we refer the reader to Section 8 of the survey paper
\cite{zbMATH07215908} and the references therein. In this article we
are interested in the sparse domination of square function operators.

The sparse domination of classical square function operators was
first considered in \cite{MR2770437}. In this article it was
discovered that in order to obtain sharp weighted estimates for square
functions from a
sparse domination result, the sparse techniques applied to singular integral operators had to be adjusted to account for the quadratic
nature of the square function. Thus, instead of a ``linear'' sparse
domination result, one must aim for a stronger ``quadratic'' sparse domination theorem. This idea was also explored in \cite{Brocchi} where a
quadratic result with minimal $T1$-type assumptions is proved. Similar
ideas are also investigated in the work of Lorist \cite{Lorist}, where
sparse domination is proved for general vector-valued operators.

Since the turn of the century, fuelled by applications to boundary
value problems and the epic contest of ideas
surrounding the Kato conjecture, there has been a sustained and pronounced interest in weighted estimates for non-integral singular
operators that are beyond the realm of Calder\'{o}n--Zygmund
theory. Some of the most prominent examples of which are operators attached
to the divergence form elliptic operator $L = - \mathrm{div} (A
\nabla)$, where $A$ is bounded and elliptic with complex coefficients.
For instance, neither the Riesz transforms $\nabla L^{-\frac{1}{2}}$
nor the constituent operators $\{\sqrt{t} \nabla e^{-t L}\}_{t>0}$ of the square function
  \begin{equation*}
    G_{L} f = \br{\int^{\infty}_{0} \abs{\sqrt{t} \nabla e^{-t L} f}^{2} \frac{\D{t}}t}^{1/2}
  \end{equation*}
possess integral kernels that satisfy any meaningful estimates and, as
such, are deserving of the title ``non-integral''. As a result of this
characteristic, and in contrast to the classical setting of the Laplacian operator $-\Delta$, these operators will fail to be bounded on $L^{p}(\R^{n})$ for $p$ in the entire
interval range $(1,\infty)$. Instead, as proved in
\cite{auscher2007necessary}, boundedness will occur if and only if $p$
is contained within a restricted
subinterval of $(1,\infty)$ that will depend on the perturbation $A$. Similarly, for boundedness on the weighted space
$L^{p}(w)$, one must also consider a restricted range of $p \in (1,\infty)$. For a detailed
investigation into such results the reader is referred to the seminal
series of papers by P. Auscher and J. M. Martell,
\cites{AuscherMartellIGeneral,AuscherMartellII,AuscherMartellIII,AuscherMartellIV}.

The sparse domination methods developed for
Calder\'{o}n--Zygmund operators in \cites{MR3085756,MR3625108,MR3484688} 
automatically imply boundedness on $L^{p}(\R^{n})$ for $p$ in the full
range $(1,\infty)$. It then follows that the classical sparse domination is
particularly ill-suited to non-integral singular operators.
In the article \cite{bernicot2016sharp}, the authors F. Bernicot,
D. Frey and S. Petermichl introduced a linear sparse domination framework
that was adapted to non-integral singular operators in the sense that
the sparse object dominating the operator would only be bounded on a
restricted range.  This linear sparse domination allowed for sharp weighted estimates to be
produced for a wide range of operators associated with
$L$ that included the Riesz transforms $\nabla
L^{-\frac{1}{2}}$.

As stated earlier for the classical setting of the Laplacian,
the linear sparse domination in \cite{bernicot2016sharp}
does not imply the best weighted bounds for square functions
for $p>2$.
The ultimate objective of this article is thus
to prove a quadratic sparse domination theorem for non-integral
square functions. This, in turn, will yield weighted estimates for
square functions similar to $G_{L}$. These weighted estimates will be optimal in the sense that they will
reproduce the sharp weighted estimates of \cite{MR2770437} for
$G_{-\Delta}$. Similarly, they will also reproduce optimal weighted
estimates for $G_{L}$ when $L = - \mathrm{div}(A \nabla)$ and $A$ is
real-valued, a result that was first proved  by T. A. Bui and X. Duong in
\cite{MR4058541}.

Motivated by finding a uniform setting that will include several examples of square functions,
we consider the following general framework.
The underlying space $(M,d,\mu)$ is a locally compact separable metric space $(M,d)$ equipped
with a Borel measure $\mu$ that is finite on compact sets and strictly
positive on any non-empty open set. For a subset $B \subset M$ of non-vanishing and finite measure,
we denote $\lvert B \rvert \coloneqq \mu(B)$.

The measure $\mu$ will be assumed to satisfy the doubling property,
\begin{equation}
  \label{eqtn:Doubling1}
  \abs{B(x,2 r)} \lesssim \abs{B(x,r)}
\end{equation}
for all $x \in M$ and $r > 0$, where $B(x,s)$ denotes the ball of radius $s > 0$ centered at a
point $x \in M$ and  $X \lesssim Y$ will be used throughout the paper to signify that
there exists a constant $C>0$ such that $X \le C Y$.

There will then exist some $\nu > 0$ for which
\begin{equation}
  \label{eqtn:Doubling2}
  \abs{B(x,r)} \lesssim \br{\frac{r}{s}}^{\nu} \abs{B(x,s)} \qquad
  \forall \, x \in M, \, r \geq s > 0.
\end{equation}

It will be assumed that there exists some non-decreasing function
$\varphi : (0,\infty) \rightarrow (0,\infty)$ with $\varphi(1) = 1$ for which
\begin{equation}
  \label{eqtn:Growth}
\abs{B(x,r)} \simeq \varphi\br{\frac{r}{s}} \abs{B(x,s)}
\end{equation}
for all $x \in M$ and $r, \, s > 0$, where $X \simeq Y$ means that both $X \lesssim Y$ and $Y \lesssim X$ hold. This technical condition has been
imposed in order to prove boundedness of a certain maximal operator
that is essential to our proof. This point will be elaborated upon
further in Remark
\ref{rmk:Assumptions} and Section \ref{sec:Maximal}.

Let $L$ be an unbounded operator on $L^{2}(M,\mu)$ satisfying the
below assumption.
  
\begin{assumption}
  \label{assum:L}
  $L$ is an injective linear operator on $L^{2}(M,\mu)$ with dense domain
  $\mathcal{D}_{2}(L) \subset L^{2}(M,\mu)$. $L$ is 
$\omega$-accretive for some $0 \leq \omega < \pi / 2$ and there
exists some $1 \leq p_{0} < 2 < q_{0} \leq \infty$  and $c > 0$ such that for all
balls $B_{1}, \, B_{2}$ of radius $\sqrt{t}$,
$$
\norm{e^{-t L}}_{L^{p_{0}}(B_{1}) \rightarrow L^{q_{0}}(B_{2})}
\lesssim \abs{B_{1}}^{-\frac{1}{p_{0}}} \abs{B_{2}}^{\frac{1}{q_{0}}}
e^{-c \frac{d(B_{1},B_{2})^{2}}{t}}.
$$
\end{assumption}

 From \cref{assum:L}, it follows that $L$ is a maximal accretive
operator on $L^{2}(M,\mu)$, $L$ possesses a bounded
holomorphic functional calculus on $L^{2}(M,\mu)$ and $-L$ is the
generator of an analytic semigroup $(e^{-t L})_{t > 0}$ on
$L^{2}(M,\mu)$.

Throughout this article,
we consider square function operators
associated with $L$. These will be defined to be operators $S$ that satisfy the
following set of assumptions.

\begin{assumption}
  \label{assum:S}
  \begin{enumerate}
  \item[(a)] The operator $S$ is sublinear and bounded on  $L^{2}(M,\mu)$.   
  \item[(b)] \emph{(Off-diagonal estimates for the constituent operators)}
    The operator $S$ is of the form
    $$
    Sf(x) := \br{\int^{\infty}_{0} \abs{\mathcal{Q}_{t}f(x)}^{2} \, \frac{\D{t}}{t}}^{\frac{1}{2}},
    $$
    where $\lb \mathcal{Q}_{t} \rb_{t > 0}$ is a collection of bounded operators
    on $L^{2}(M,\mu)$ which satisfy the property that for balls $B_{1},
    \, B_{2}$ of radius $\sqrt{t}$,
    $$
    \norm{\mathcal{Q}_{t}}_{L^{p_{0}}(B_{1}) \rightarrow L^{q_{0}}(B_{2})}
    \lesssim \abs{B_{1}}^{-\frac{1}{p_{0}}} \abs{B_{2}}^{\frac{1}{q_{0}}}
    \br{1 + \frac{d(B_{1},B_{2})^{2}}{t}}^{-(\nu + 1)}.
    $$

  \item[(c)] \emph{(Cancellation with respect to $L$)}
    There exists $A_{0} > 0$ and $N_{0} \in \N$ such that for all
    integers $N \geq N_{0}$,
    $$
    \mathcal{Q}_{s} (t L)^{N} e^{-t L} = \frac{s^{A_{0}} t^{N}}{(s +
      t)^{A_{0} + N}} \Theta_{s + t}^{(N)},
    $$
    where $\lbrace \Theta_{r}^{(N)} \rbrace_{r > 0}$ is a collection of bounded
    operators on $L^{2}(M,\mu)$ that satisfies off-diagonal estimates
    at all scales
    in the sense that
    \begin{equation*}
      \big\lVert \Theta_{r}^{(N)} \big\rVert_{L^{p_{0}}(B_{1}) \rightarrow L^{q_{0}}(B_{2})}
      \lesssim \abs{B_{1,\sqrt{r}}}^{-\frac{1}{p_{0}}} \abs{B_{2,\sqrt{r}}}^{\frac{1}{q_{0}}} \br{1 + \frac{d(B_{1},B_{2})^{2}}{r}}^{-
        \frac{\nu + 1}{2}}
    \end{equation*}    
    for all balls $B_{1}, \, B_{2}  \subset M$ and $r > 0$, where
    $B_{i,\sqrt{r}} := (\sqrt{r}/r(B_{i})) B_{i}$ for $i = 1, \, 2$ and  for a ball $B = B(x,r)$
and $t > 0$, we will use the notation $t B$ to represent the
$t$-dilate of $B$, $t B \coloneqq B(x,t r)$.

\item[(d)] \emph{(Cotlar type inequality)} There exists an exponent $p_{1} \in [p_{0},2)$ such that
  for all $x \in M$ and $r > 0$
  $$
\br{\dashint_{B(x,r)} \abs{S e^{-r^{2} L} f}^{q_{0}} \,
  \D{\mu}}^{\frac{1}{q_{0}}} \lesssim \inf_{y \in B(x,r)}
\mathcal{M}_{p_{1}}(Sf)(y) + \inf_{y \in B(x,r)} \mathcal{M}_{p_{1}}(f)(y),
$$
where we define
 $\dashint_{B}f \D{\mu} \coloneqq |B|^{-1} \int_{B}f \D{\mu}$ for $f\in L^1_{\mathrm{loc}}(M,\mu)$ and 
we denote by $\mathcal{M}$ the uncentered Hardy--Littlewood maximal function
and $\mathcal{M}_pf \coloneqq (\mathcal{M} \lvert f\rvert^p
)^{1/p}$ for any $p\ge 1$. 
\end{enumerate}
\end{assumption}

\begin{rmk}
  \label{rmk:Assumptions}
  As our work is intended to build upon the article
  \cite{bernicot2016sharp}, it will be instructive to compare our
  assumptions with the hypotheses of \cite{bernicot2016sharp}. In both
  our article and \cite{bernicot2016sharp}, the assumptions imposed upon
  the underlying operator $L$ are identical. For the operator $S$, we
  have also assumed $L^{2}$-boundedness and a Cotlar
  type inequality. However, we have included the additional assumption that $S$ is of
  the form of a square function composed of operators $\mathcal{Q}_{t}$
  that satisfy off-diagonal bounds. Also, the cancellative condition of
  $S$ with
  respect to $L$, Assumption 1.1(b) of \cite{bernicot2016sharp}, has
  instead been replaced by a cancellative condition of the constituent operators
  $\mathcal{Q}_{t}$.

  In Section \ref{sec:Maximal}, using the growth
  condition imposed upon our metric space \eqref{eqtn:Growth}, it
  will be proved that the assumed cancellative condition for the
  $\mathcal{Q}_{t}$ operators does in fact imply the cancellative
  condition of $S$ with respect to $L$. This allows us to deduce that
  the operators under consideration in our article
  are a restricted subclass of the operators considered by
  \cite{bernicot2016sharp}. Indeed, the additional growth condition of our
  metric space \eqref{eqtn:Growth} has been assumed with the sole
  purpose of ensuring that we are working strictly within the setting of
  \cite{bernicot2016sharp}. This will allow us to utilise some of the
  intermediary results from \cite{bernicot2016sharp} 
  without having to reprove them under a different cancellation condition. This will be of particular use to
  us in Section \ref{sec:Maximal} when we come to prove
  the boundedness of a certain maximal function operator that is essential to
  our proof.
\end{rmk}

\begin{rmk}
One does not have to search for long before encountering examples of square function operators that
satisfy the previous set of assumptions. For instance, the square
functions associated with an elliptic operator $L = - \mathrm{div} A \nabla$, denoted by $g_{L}$
and $G_{L}$ and discussed in \cite{AuscherMartellIII}, and square
functions associated with the Laplace--Beltrami $\Delta$ satisfy
the above conditions. We discuss these examples in detail in Section 3.
\end{rmk}

In order to make sense of the concept of sparse domination and precisely
state our main theorem we need to define the notion of a sparse
family of cubes.
We consider a system of dyadic cubes $\mathscr{D}$ on the metric space $(M,d)$.
\begin{define}%
  A collection of dyadic cubes $\mathcal{S} \subseteq \mathscr{D}$ is
  $\frac12$-sparse if there exists a disjoint collection of sets $\{
  F_P \,:\, P \in\mathcal{S}\}$ such that for every $P \in
  \mathcal{S}$ we have $F_{P} \subset P$ and $\lvert F_P\rvert > \frac12 \lvert P\rvert$.
\end{define}

\begin{theorem}\label{t.main}
  Let $p_0<2<q_0$ and consider operators $L$ and $S$ that satisfy
  Assumptions \ref{assum:L} and \ref{assum:S} for this choice of
  exponents.
  For any $f$ and $g$ in $C^\infty_c(M)$
  there exists a sparse family $\mathcal{S} \subseteq \mathscr{D}$ 
  such that
  \begin{equation}\label{e.main}
     \int_{M} (Sf)^2  g \,\D{\mu}
     \le c \sum_{P\in\mathcal{S}} \left( \dashint_{5P} \lvert f \rvert^{p_0} \D{\mu} \right)^{2/p_0}  \left(\dashint_{5P} \lvert g\rvert^{q_0^*} \D{\mu} \right)^{1/q_0^*} |P|,
  \end{equation}
  where $q*:=\left (\frac{q}{2} \right)'$ is the H\"{o}lder conjugate
  of $\frac{q}{2}$, and $c$ is a positive constant independent of $f$ and $g$.
\end{theorem}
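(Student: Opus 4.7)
The plan is to follow the now-standard recursive stopping-time construction for sparse domination, adapted to the quadratic setting and to the non-integral nature of $S$. Since $f,g \in C_c^\infty(M)$ have compact support, a pigeonholing over shifted dyadic grids reduces matters to proving \eqref{e.main} with both functions supported in a single large cube $Q_0 \in \mathscr{D}$. The sparse family $\mathcal{S}$ is then built recursively: put $Q_0 \in \mathcal{S}$, and for each $Q \in \mathcal{S}$ take as principal children $\mathcal{F}(Q)$ the maximal dyadic $P \subsetneq Q$ on which at least one of the stopping inequalities
\[
\br{\dashint_{5P}\abs{f}^{p_0}\D{\mu}}^{1/p_0} > \Lambda \br{\dashint_{5Q}\abs{f}^{p_0}\D{\mu}}^{1/p_0}, \qquad \br{\dashint_{5P}\abs{g}^{q_0^{*}}\D{\mu}}^{1/q_0^{*}} > \Lambda \br{\dashint_{5Q}\abs{g}^{q_0^{*}}\D{\mu}}^{1/q_0^{*}},
\]
or an auxiliary condition controlling $\mathcal{M}_{p_1}(Sf)$ on $P$ by a corresponding quantity attached to $5Q$, is violated. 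For $\Lambda$ large, the weak-type bounds of $\mathcal{M}_{p_1}$ and of the auxiliary maximal function established in \cref{sec:Maximal} force $\sum_{P \in \mathcal{F}(Q)}\abs{P} \le \abs{Q}/2$, so $\mathcal{S}$ is sparse with pairwise disjoint \emph{good sets} $E_Q := Q \setminus \bigcup_{P \in \mathcal{F}(Q)} P$.

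Telescoping $\int_{Q_0}(Sf)^2 g\D{\mu}$ along $\mathcal{S}$ and splitting $f = f\1_{5Q} + f\1_{(5Q)^c}$ at each scale reduces the theorem to the single-scale bound
\[
\int_{E_Q}\abs{S(f\1_{5Q})}^2 g\D{\mu} \lesssim \br{\dashint_{5Q}\abs{f}^{p_0}\D{\mu}}^{2/p_0}\br{\dashint_{5Q}\abs{g}^{q_0^{*}}\D{\mu}}^{1/q_0^{*}}\abs{Q},
\]
together with a tail estimate controlling $S(f\1_{(5Q)^c})$ on $Q$. The tail is handled by expanding $\abs{Sf}^2 = \int_0^\infty \abs{\mathcal{Q}_t f}^2 \D{t}/t$, inserting the off-diagonal bounds of \cref{assum:S}(b) over dyadic annuli of $5Q$, and summing a geometric series whose convergence is guaranteed by the matching of the decay exponent $\nu+1$ against the doubling dimension from \eqref{eqtn:Doubling2}.

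For the local bound, decompose $f_Q := f\1_{5Q}$ via the identity $I = \brs{I - (I - e^{-r_Q^2 L})^M} + (I - e^{-r_Q^2 L})^M$, with $r_Q$ comparable to the sidelength of $Q$ and $M \in \N$ a sufficiently large integer. The first summand is a finite linear combination of operators $Se^{-kr_Q^2 L}f_Q$ for $1 \le k \le M$, to which the Cotlar-type inequality \cref{assum:S}(d) applies directly: the stopping-time construction together with routine maximal-function manipulations controls the right-hand side of Cotlar on $E_Q$ by a multiple of the $L^{p_0}$-average of $f$ on $5Q$, and H\"older with exponents $q_0/2$ and $q_0^*$ then pairs this with $g$ to produce the claimed quadratic contribution.

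The main technical obstacle is the cancellative term $S(I-e^{-r_Q^2 L})^M f_Q$. Expanding the square function yields
\[
\abs{S(I-e^{-r_Q^2 L})^M f_Q(x)}^2 = \int_0^\infty \abs{\mathcal{Q}_t (I-e^{-r_Q^2 L})^M f_Q(x)}^2 \frac{\D{t}}{t}.
\]
Writing $(I-e^{-r_Q^2 L})^M = (r_Q^2 L)^M \int_{[0,1]^M} e^{-(\sigma_1 + \cdots + \sigma_M) r_Q^2 L}\D\vec{\sigma}$ and applying \cref{assum:S}(c) inside this representation, one recasts $\mathcal{Q}_t(I-e^{-r_Q^2 L})^M$ as an average of operators $\Theta^{(M)}_{t+s r_Q^2}$ carrying the joint decay factor of order $\br{t/(t+r_Q^2)}^{A_0}\br{r_Q^2/(t+r_Q^2)}^M$. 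For $M$ sufficiently large, this factor is square-integrable against $\D{t}/t$ on $(0,\infty)$; combined with the off-diagonal bounds for $\Theta^{(M)}$, Minkowski's inequality in $t$ (available since $q_0 \ge 2$), and summation over dyadic annuli of $5Q$ (convergent by \eqref{eqtn:Doubling1} and \eqref{eqtn:Growth}), this yields the averaged $L^{p_0} \to L^{q_0}$ estimate
\[
\br{\dashint_Q \abs{S(I-e^{-r_Q^2 L})^M f_Q}^{q_0}\D{\mu}}^{1/q_0} \lesssim \br{\dashint_{5Q}\abs{f}^{p_0}\D{\mu}}^{1/p_0},
\]
from which H\"older against $g$ in $L^{q_0^{*}}(Q)$ and the stopping bound on $g$ complete the local estimate. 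Producing this joint cancellation factor cleanly from the interaction between $\mathcal{Q}_t$ and $(I-e^{-r_Q^2 L})^M$, and carrying out the annular summation uniformly in the scale $t$, is the most delicate step of the argument; the remainder is then standard sparse-domination machinery.
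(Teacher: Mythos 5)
Your skeleton contains all the right ingredients (reduction to one cube, a stopping-time recursion, Cotlar for the non-cancellative part, \cref{assum:S}(c) for the cancellative part, annular summation against the $\nu+1$ decay), but the way you assemble them breaks at the step you yourself flag as most delicate, and the failure is structural rather than technical. The claimed local estimate $\br{\dashint_{Q}\abs{S(I-e^{-r_Q^2L})^{M}f_Q}^{q_0}\D{\mu}}^{1/q_0}\lesssim\br{\dashint_{5Q}\abs{f}^{p_0}\D{\mu}}^{1/p_0}$ is false for genuinely non-integral square functions. After Minkowski one must control $\int_0^\infty\br{\dashint_Q\abs{\mathcal{Q}_t(I-e^{-r_Q^2L})^{M}f_Q}^{q_0}\D{\mu}}^{2/q_0}\tfrac{\D{t}}{t}$. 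Writing $(I-e^{-r^2L})^{M}=(r^2)^{M}\int_{[0,1]^M}L^{M}e^{-\sigma r^2L}\D{\vec{\sigma}}$ and applying \cref{assum:S}(c) gives $r^{2M}t^{A_0}(t+\sigma r^2)^{-A_0-M}\Theta^{(M)}_{t+\sigma r^2}$; the scalar factor is indeed square-integrable in $\tfrac{\D{t}}{t}$, but $\Theta^{(M)}_{\rho}$ only satisfies ball-to-ball $(p_0,q_0)$ bounds at scale $\sqrt{\rho}$, and aggregating these over the whole cube $Q$ of sidelength $r\gg\sqrt{\rho}$ costs a counting factor of order $(r/\sqrt{\rho})^{\nu}$ (this is \cref{lemma:counting_R_balls} and \cref{lemma:corona}; it cannot be removed, since removing it would amount to global $L^{p_0}\to L^{q_0}$ boundedness of the constituent operators, which is exactly what fails in the non-integral setting). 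For $t\ll r_Q^2$ the portion of the $\vec{\sigma}$-integral with $\sum_i\sigma_i\lesssim 1$ then produces a bound of size $(r_Q^2/t)^{\alpha}$ for some $\alpha>0$ with no compensating smallness, so the $t$-integral diverges at $0$. Morally, $S(I-e^{-r_Q^2L})^{M}$ restricted to scales $t<r_Q^2$ is just $S$ at those scales, and a local $(p_0,q_0)$ reverse H\"older inequality for it would force boundedness of $S$ outside the admissible range. The same defect reappears in your tail estimate: for $t\gtrsim\ell(P)^2$ the factor $\br{1+d(P,S_k(P))^2/t}^{-(\nu+1)}$ is $\simeq 1$, so the annular sum and the $t$-integral do not converge from \cref{assum:S}(b) alone.

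The paper's architecture is designed precisely to avoid ever estimating these two pieces. It splits the $t$-integral of the square function at $\ell(P)^2$: the local small-scale piece $\int_P\int_0^{\ell(P)^2}\abs{\mathcal{Q}_t(f\1_{5P})}^2\tfrac{\D{t}}{t}\,g\D{\mu}$ is the quantity fed back into the recursion (not $\int_P(Sf)^2g\D{\mu}$), while the large-scale piece $t>\ell(P)^2$ is controlled by the truncated grand maximal operator $S^{*}f=\sup_{B\ni x}\br{\dashint_B\abs{S^{[r(B)^2,\infty)}f}^{q_0}\D{\mu}}^{1/q_0}$ evaluated on the parent of the stopping cube. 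The decomposition $I=P^{(N)}_{r^2}+(I-P^{(N)}_{r^2})$, the cancellation \cref{assum:S}(c) and the Cotlar inequality \cref{assum:S}(d) all enter only in \cref{sec:Maximal}, in proving the weak $(p_0,p_0)$ bound for $S^{*}$ — and there the cancellation is exploited exclusively in the regime $t>r^2$, where it genuinely helps. To repair your argument you would need to build the truncation scale $\ell(P)^2$ into the object being iterated and to include $S^{*}$ (not merely $\mathcal{M}_{p_1}(Sf)$) among your stopping conditions; at that point you would essentially have reproduced the paper's proof.
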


The right hand side of \eqref{e.main} is the sparse form natural to
the square function. We observe that the bilinear sparse form obtained
differs from the linear sparse domination results where the $L^{q_{0}'}$ average of $g$
is used instead (c.f. \cite{bernicot2016sharp}). This is due to the non-linear nature of the problem at hand. Analogous sparse forms appear when controlling vector-valued operators, as seen in the work of Lorist \cite{Lorist}. In fact, as the operators we consider satisfy the hypotheses from
\cite{bernicot2016sharp}, it follows that 
{\cite[Thm.~5.7]{bernicot2016sharp}} will be valid for
$S$. This result states that for any $f$ and $g$ in $C^\infty_c(M)$
  there exists a sparse family $\mathcal{S} \subseteq \mathscr{D}$ 
  for which
\begin{equation}
  \label{eqtn:BFP}
  \abs{\int_{M} S f \cdot g \, \D{\mu}}
  \lesssim \sum_{P \in \mathcal{S}} \br{\dashint_{5 P} \abs{f}^{p_{0}}
  \D{\mu}}^{1/p_{0}} \br{\dashint_{5 P} \abs{g}^{q_{0}'} \D{\mu}}^{1/q_{0}'} \abs{P}.
\end{equation}

The essence of our sparse domination result is that, under the
additional square function hypotheses assumed above, the previous
sparse bound can be improved to a quadratic sparse domination bound
that is uniquely suited to square function operators. 

Our proof strategy requires the weak boundedness at the endpoint of
a``grand maximal function'' operator associated with the square function. This
strategy is an adaptation of Lerner's work on singular integrals \cite{MR3484688}
to our setting, which itself is an elaboration of
Lacey's elementary proof from \cite{MR3625108}.  The weak-type boundedness
of our grand maximal operator will be obtained by demonstrating
that our operator is pointwise controlled from above by a related
maximal operator that was
introduced in \cite{bernicot2016sharp}. The weak boundedness of this
alternative grand maximal operator was proved in
\cite{bernicot2016sharp} under their setting. Since, as will be shown
in Section \ref{sec:Maximal}, we are working strictly within their
setting, this will then allow us to conclude that our grand maximal
operator is also weakly bounded at the endpoint.

Next we give an account of the weighted estimates
that we obtain for our square functions via the sparse domination (Theorem \ref{t.main}).
It is understood that if the operator at hand maps $L^p$ to $L^p$ for a restricted range of exponents $p$,
the relevant classes of weights will involve the intersection
of Muckenhoupt and Reverse H\"older weights \cite{AuscherMartellIGeneral}.
We define them precisely.

A weight $w$ is a positive locally integrable function. We say that a weight $w$ is in the Muckenhoupt $A_p$ class for $1<p<\infty$ and we denote it by $w\in A_p$ if and only if
\begin{equation*}
  [w]_{A_p}:=\sup_{Q \,\,\text{cube }} \dashint_{Q} w \D{\mu}
  \left(\dashint_{Q} w^{1 - p'} \D{\mu} \right)^{p-1}<\infty,
\end{equation*}
where $p'=p/(p-1)$ is the dual exponent of $p$.
We say that a weight $w$ belongs to the reverse H\"older class $RH_p$ for $p>1$ if 
\begin{equation*}
  [w]_{RH_p}:= \sup_{Q \,\,\text{cube }} \left(\dashint_{Q} w^p \D{\mu} \right)^{1/p} \left( \dashint_{Q} w \D{\mu} \right)^{-1}<\infty.
\end{equation*}

We can now state our second result.
\begin{theorem} \label{thm:weight_bounds}
  Fix $p_0<2<q_0$. For any sparse family
  $\mathcal{S} \subset \mathscr{D}$,
  functions $f,\,g \in L^{1}_{\operatorname{loc}}(\D{\mu})$,
  $p \in (2,q_{0})$ and weight $w\in A_{\frac{p}{p_{0}}}\cap RH_{\left(\frac{q_{0}}{p}\right)^{'}}$ we have
  \begin{equation}\label{e.secondmain}
    \sum_{P \in \mathcal{S}} \left(\dashint_{5 P} \abs{f}^{p_0}\D{\mu}\right)^{2/p_0} \left(\dashint_{5 P}  \abs{g}^{q_{0}^{*}} \D{\mu} \right)^{1/q_{0}^{*}} \abs{P} 
    \leq C_0 \Big(\brs{w}_{A_{\frac{p}{p_{0}}}} \cdot \brs{w}_{RH_{\big(\frac{q_{0}}{p}\big)'}}\Big)^{2\gamma(p)} \norm{f}_{L^{p}(w)}^{2} \norm{g}_{L^{p^{*}}(\sigma)},
  \end{equation}
  where
  \begin{equation*}
    \gamma(p) \coloneqq \max \br{\frac{1}{p - p_{0}}, \br{\frac{q_{0}}{p}}' \frac{1}{2q_{0}^{*}}} \,\,\, \text{ and } \,\,\, \sigma \coloneqq w^{1-p^{*}}.
  \end{equation*}
 The constant $C_0$ is independent of both the weight and the sparse
  collection, and the dependence of this estimate on the weight characteristic is sharp.
\end{theorem}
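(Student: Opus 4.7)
My plan is a three-step reduction: sparse-to-maximal integral, weighted Hölder split, and Buckley-type sharp maximal bounds, followed by a limited-range weight-duality argument and a sharp Rubio de Francia extrapolation. First, using the $\tfrac{1}{2}$-sparseness ($\abs{P}\le 2\abs{F_P}$ with $F_P\subset P\subset 5P$ pairwise disjoint) together with the pointwise bounds $\br{\dashint_{5P}\abs{f}^{p_0}\D\mu}^{1/p_0}\le \mathcal{M}_{p_0}f(x)$ and $\br{\dashint_{5P}\abs{g}^{q_0^*}\D\mu}^{1/q_0^*}\le \mathcal{M}_{q_0^*}g(x)$ for $x\in F_P$, the sparse sum is majorized by $2\int_M(\mathcal{M}_{p_0}f)^2\,\mathcal{M}_{q_0^*}g\,\D\mu$. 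Noting that $p^*=(p/2)'$ gives $2/p+1/p^*=1$ and that $\sigma=w^{1-p^*}$ gives the factorization $w^{2/p}\sigma^{1/p^*}\equiv 1$, Hölder's inequality with exponents $p/2$ and $p^*$ then splits the integral as $\norm{\mathcal{M}_{p_0}f}_{L^p(w)}^2\,\norm{\mathcal{M}_{q_0^*}g}_{L^{p^*}(\sigma)}$.

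\textbf{Weighted maximal bounds.} I then apply Buckley's sharp maximal inequality (via $\mathcal{M}_r=\mathcal{M}(\abs{\cdot}^r)^{1/r}$ on $L^{p/r}$) to both factors:
\[
\norm{\mathcal{M}_{p_0}f}_{L^p(w)}\lesssim [w]_{A_{p/p_0}}^{1/(p-p_0)}\norm{f}_{L^p(w)}, \quad \norm{\mathcal{M}_{q_0^*}g}_{L^{p^*}(\sigma)}\lesssim [\sigma]_{A_{p^*/q_0^*}}^{1/(p^*-q_0^*)}\norm{g}_{L^{p^*}(\sigma)}.
\]
The remaining factor $[\sigma]_{A_{p^*/q_0^*}}$ is then translated into the mixed characteristic $[w]_{A_{p/p_0}}[w]_{RH_{(q_0/p)'}}$ via the standard limited-range weight-duality identity (cf.\ Auscher--Martell): $w\in A_{p/p_0}\cap RH_{(q_0/p)'}$ is equivalent to $\sigma\in A_{p^*/q_0^*}$ with a sharp algebraic correspondence between the characteristics.

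\textbf{Main obstacle and sharpness.} The subtle point is that a direct product of the two Buckley constants yields the \emph{sum} $\frac{1}{p-p_0}+\frac{(q_0/p)'}{2q_0^*}$ in the exponent of the mixed characteristic, whereas $\gamma(p)$ is their \emph{maximum}. To upgrade from sum to max I will invoke sharp limited-range Rubio de Francia extrapolation: establish the estimate at a well-chosen base exponent $p_1\in(p_0,q_0)$ at which only one of the two terms in the max is active, then extrapolate to all $p\in(p_0,q_0)$ with sharp tracking of weight dependence. Finally, sharpness of the exponent $2\gamma(p)$ will be verified by testing against power weights $w(x)=\abs{x}^\alpha$ on $\R^n$ with the classical Littlewood--Paley square function, choosing $\alpha$ close to either the $A_{p/p_0}$- or the $RH_{(q_0/p)'}$-endpoint of admissibility in turn to realize each branch of the max.
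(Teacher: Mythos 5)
Your first three steps are correct as far as they go: the sparse-to-maximal reduction via the disjoint sets $F_P$, the Hölder split using $w^{2/p}\sigma^{1/p^*}=1$, and Buckley's bounds all hold. But they provably cannot yield the stated exponent, and the proposed repair does not close the gap. The product of the two Buckley constants produces (after translating $[\sigma]_{A_{p^*/q_0^*}}$ into the mixed characteristic of $w$) the \emph{sum} $\frac{2}{p-p_0}+\br{\frac{q_0}{p}}'\frac{1}{q_0^*}$ in the exponent. Since both summands are strictly positive for every $p\in(2,q_0)$ (the second vanishes only if $q_0=2$), there is \emph{no} base exponent at which "only one term is active": at the critical index $\mathfrak{p}$ the two terms coincide, so your bound there carries exponent $4\gamma(\mathfrak{p})$, exactly twice the target $2\gamma(\mathfrak{p})$. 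Extrapolation cannot repair this, for two reasons. First, the sharp restricted-range extrapolation (Theorem \ref{thm:Extrapolation}) has $\beta(p,q)\ge 1$ with equality at $p=q$, so it can only propagate — never improve — the constant you start with; starting from the sum at any base point you end with at least the sum there. Second, Theorem \ref{thm:Extrapolation} applies to pairs with matching $L^p(w)$ norms on both sides, whereas \eqref{e.secondmain} is a form bound against $\norm{f}_{L^p(w)}^2\norm{g}_{L^{p^*}(\sigma)}$ in which the $q_0^*$-average of $g$ is not linear in $g$; dualising it into an operator inequality amenable to extrapolation is itself a nontrivial step you have not supplied. (Extrapolation is used in the paper only afterwards, for Corollary \ref{cor:Weighted}, and only from the single exponent $\mathfrak{p}$ where the two branches of $\gamma$ agree.)

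The structural reason your approach stalls is that Hölder decouples $f$ and $g$ into \emph{unweighted} maximal functions measured in $L^p(w)$ and $L^{p^*}(\sigma)$, which forces a Buckley constant from each factor. The paper's proof avoids this by never invoking Buckley: it introduces $v\coloneqq w^{(q_0/p)'}$, $r\coloneqq\phi(p)$ and the dual weight $u\coloneqq v^{1-r'}$, extracts the characteristic $\brs{v}_{A_r}$ \emph{exactly once} from the product $\br{\dashint_{\bar P}v}^{1/q_0^*}\br{\dashint_{\bar P}u}^{2/p_0}$ (distributing the leftover power $\kappa(p)$ or $\bar\kappa(p)$ onto whichever average has the right sign, which is what produces the two cases $p\gtrless\mathfrak{p}$ and hence the maximum), and then controls the remaining averages by the \emph{weighted} dyadic maximal operators $\dyadicM_{p_0,u}$ and $\dyadicM_{q_0^*,v}$, which are bounded on $L^p(u\D\mu)$ and $L^{p^*}(v\D\mu)$ with constants independent of the weights by \eqref{eq:bound_M_w}. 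If you want to keep your outline, you must replace the double Buckley step with an argument of this type; as written, the proof establishes only the weaker exponent $\frac{2}{p-p_0}+\br{\frac{q_0}{p}}'\frac{1}{q_0^*}$, which your own sharpness examples (power weights, which are indeed the right tool, as in Section \ref{sec:Sharpness}) would show is not attained.
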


Expanding further upon the above theorem, the result is sharp in the
sense that the dependence on the weight characteristic $\brs{w}_{A_{p/p_{0}}} \brs{w}_{RH_{(q_{0}/p)'}}$
can be matched at least asymptotically with the right choice of
functions, weights and sparse form. A detailed proof of this sharpness
will be presented in \cref{sec:Sharpness}.
The above theorem, when combined with our other main result, Theorem
\ref{t.main}, allow us to obtain as a corollary the following sharp weighted
result for non-integral square functions. It is important to note that
the combination of Theorems \ref{t.main} and \ref{thm:weight_bounds}
only produces the below weighted bounds for $p \in (2, q_{0})$. The
weighted estimates for the full range $p \in (p_{0},q_{0})$
follows from this on applying a restricted range extrapolation theorem
provided by Auscher and Martell in 
 \cite[Thm.~4.9]{AuscherMartellIGeneral}.

\begin{corollary}
  \label{cor:Weighted}
  Let $p_0<2<q_0$ and consider operators $L$ and $S$ that satisfy
  Assumptions \ref{assum:L} and \ref{assum:S} for this choice of exponents. For $p \in (p_{0},q_{0})$ and  $w \in A_{\frac{p}{p_{0}}} \cap
  RH_{\br{\frac{q_{0}}{p}}'}$ the square function $S$ is bounded on $L^{p}(w)$
  with
  \begin{equation}
    \label{eqtn:Bounded}
    \norm{S}_{L^{p}(w)} \lesssim \br{\brs{w}_{A_{\frac{p}{p_{0}}}} \cdot \brs{w}_{RH_{(\frac{q_{0}}{p})'}}}^{\gamma(p)},
  \end{equation}
  where $\gamma(p)$ is as defined in Theorem \ref{thm:weight_bounds}.
\end{corollary}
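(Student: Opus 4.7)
The proof of this corollary is essentially a mechanical combination of \cref{t.main} and \cref{thm:weight_bounds}, together with duality for $L^{p/2}(w)$ and the Auscher--Martell restricted range extrapolation to cover the exponents $p\le 2$.

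The plan is to first treat $p\in(2,q_0)$ directly. For $f\in C^\infty_c(M)$ and a nonnegative $g\in C^\infty_c(M)$, \cref{t.main} produces a sparse family $\mathcal{S}\subseteq\mathscr{D}$ for which $\int_M (Sf)^2 g\,\D\mu$ is dominated by the bilinear sparse form appearing on the right-hand side of \eqref{e.main}. Feeding the same sparse family into \cref{thm:weight_bounds} and chaining the two estimates yields
\begin{equation*}
  \int_M (Sf)^2 g\,\D\mu \lesssim \Big([w]_{A_{p/p_0}}\cdot[w]_{RH_{(q_0/p)'}}\Big)^{2\gamma(p)} \norm{f}_{L^p(w)}^2 \norm{g}_{L^{p^*}(\sigma)},
\end{equation*}
with $\sigma=w^{1-p^*}$. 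The next step is to recognise that the pairing $(h,g)\mapsto\int_M hg\,\D\mu$ realises $L^{p^*}(\sigma)$ as the dual of $L^{p/2}(w)$: indeed a direct computation of the exponent in $\sigma=w^{1-p^*}=w^{-2/(p-2)}$ shows that $\|h\|_{L^{p/2}(w)}=\sup_{\|g\|_{L^{p^*}(\sigma)}\le 1}|\int hg\,\D\mu|$. Applying this with $h=(Sf)^2$ and taking the supremum over $g$ gives
\begin{equation*}
  \norm{Sf}_{L^p(w)}^2 = \bignorm{(Sf)^2}_{L^{p/2}(w)} \lesssim \Big([w]_{A_{p/p_0}}\cdot[w]_{RH_{(q_0/p)'}}\Big)^{2\gamma(p)} \norm{f}_{L^p(w)}^2,
\end{equation*}
which is \eqref{eqtn:Bounded} on the range $p\in(2,q_0)$ (after a density argument to extend from $C^\infty_c$).

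To reach the remaining range $p\in(p_0,2]$, I invoke the restricted range extrapolation theorem \cite[Thm.~4.9]{AuscherMartellIGeneral}: starting from the weighted estimate just established at some fixed exponent $p_1\in(2,q_0)$, the theorem automatically propagates boundedness, together with the sharp $\gamma(p)$-type quantitative dependence on $[w]_{A_{p/p_0}}\cdot[w]_{RH_{(q_0/p)'}}$, to every $p\in(p_0,q_0)$ and every weight in $A_{p/p_0}\cap RH_{(q_0/p)'}$.

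The only genuinely non-trivial ingredient is the correct bookkeeping in the duality step, since one must verify that the weight $\sigma=w^{1-p^*}$ appearing in \cref{thm:weight_bounds} really is the dual weight of $w$ for the space $L^{p/2}(w)$; this is purely an arithmetic check on the exponent $-2p^*/p=1-p^*$ and presents no real obstacle. The only other subtle point is to ensure that the sparse family $\mathcal{S}$ furnished by \cref{t.main} is permitted in \cref{thm:weight_bounds}; but the latter is valid for \emph{any} sparse family, so the two results compose cleanly without extra work.
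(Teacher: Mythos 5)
Your treatment of the range $p\in(2,q_{0})$ is correct and composes \cref{t.main} with \cref{thm:weight_bounds} exactly as intended: the duality identification of $L^{p^{*}}(\sigma)$ with the dual of $L^{p/2}(w)$ is right (the exponent check $-2p^{*}/p=1-p^{*}$ works out), and since \cref{thm:weight_bounds} holds for an arbitrary sparse family the $f,g$-dependence of $\mathcal{S}$ causes no trouble. This is in fact slightly more direct than the paper, which only argues at the single critical exponent $\mathfrak{p}=2+p_{0}/q_{0}^{*}$ and reaches the rest of $(2,q_{0})$ by extrapolation as well.

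The gap is in the extrapolation step for $p\in(p_{0},2]$. \cref{thm:Extrapolation} takes as input a bound of the form $\norm{Sf}_{L^{q}(w)}\le C\brs{w^{(q_{0}/q)'}}_{A_{\phi(q)}}^{\alpha}\norm{f}_{L^{q}(w)}$ and returns the exponent $\beta(p,q)\alpha$ on $\brs{w^{(q_{0}/p)'}}_{A_{\phi(p)}}$; it does not ``automatically'' return $\gamma(p)$. Two things go wrong with your bookkeeping. First, you feed in the product-form bound $\br{\brs{w}_{A_{p_{1}/p_{0}}}\brs{w}_{RH_{(q_{0}/p_{1})'}}}^{\gamma(p_{1})}$; by \cref{lem:WeightProperties}(ii) the product characteristic and $\brs{w^{(q_{0}/p_{1})'}}_{A_{\phi(p_{1})}}^{1/(q_{0}/p_{1})'}$ only control each other up to squaring, so converting your hypothesis into the form required by \cref{thm:Extrapolation} costs a factor $2$ in $\alpha$, and after extrapolating and converting back via \eqref{eqtn:WeightProperty} you land on $2\gamma(p)$ rather than $\gamma(p)$. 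The paper avoids this by extrapolating the \emph{stronger} internal estimate \eqref{e.strongsecondmain}, which is already stated in terms of $\brs{w^{(q_{0}/p)'}}_{A_{\phi(p)}}^{2\gamma(p)/(q_{0}/p)'}$, and only passing to the product characteristic at the very end (the lossless direction of \eqref{eqtn:WeightProperty}). Second, the base exponent cannot be ``some fixed $p_{1}\in(2,q_{0})$'': writing $\alpha(p)\coloneqq\gamma(p)/(q_{0}/p)'=\omega(p)/q_{0}$ for $p\le\mathfrak{p}$ with $\omega(p)=(q_{0}-p)/(p-p_{0})$, one has $\beta(p,p_{1})\alpha(p_{1})=\alpha(p)$ for $p\le 2$ only when $\alpha(p_{1})=\omega(p_{1})/q_{0}$, which by \eqref{eqtn:Critical} forces $p_{1}\le\mathfrak{p}$; for $p_{1}>\mathfrak{p}$ one gets the strictly larger exponent $\frac{\omega(p)}{\omega(p_{1})}\cdot\frac{1}{2q_{0}^{*}}$. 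The paper extrapolates precisely from $q=\mathfrak{p}$ and then verifies by hand that $\beta(p,\mathfrak{p})/(2q_{0}^{*})$ equals $\gamma(p)/(q_{0}/p)'$ in both regimes $p\ge\mathfrak{p}$ and $p<\mathfrak{p}$; that verification is the actual content of the proof and cannot be skipped.
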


The result is sharp in the sense that for certain square functions, such as $S=g_{-\Delta}$  or $S=g_{L}$ when $L = - \mathrm{div} A \nabla$ and $A$ is
real-valued, the operator norm can be attained asymptotically, see \cite{MR2770437} and \cite{MR4058541} respectively. Sharpness can also be deduced from the asymptotic behaviour of the unweighted estimates \cite{FreyBas}. Unfortunately, these asymptotics are not easy to exactly compute for our non-integral square functions. 
However, the estimate \eqref{eqtn:Bounded} implies an upper bound
on the asymptotic behaviour of the unweighted norm $\lVert S \rVert_{L^p \to L^p}$,
see \cref{subsec:asymptotic_behaviour}.
In particular, when such asymptotic behaviour is known to match the upper bound,
the weighted estimates in \cref{cor:Weighted} are sharp.
For example, this is the case when $L = - \Delta$.

\subsection*{Structure of the paper} The paper is distributed as follows.
Section 2 contains some preliminary results that will be of use later in the paper.
Section 3 will discuss the examples that fit the assumptions and that one should keep in mind as references.
The proof of Theorem \ref{t.main} requires us to understand the boundedness properties of a grand maximal operator
associated with the corresponding square functions.
These boundedness properties are included in Section 4.
Section 5 is dedicated to the proof of our main result, Theorem \ref{t.main}.
Section 6 considers weighted estimates for the sparse forms found
in Section 5 and, in particular, proves \cref{thm:weight_bounds}.
Finally, Section 7 is dedicated to the proof of the sharpness of \cref{thm:weight_bounds} when $p>2$.

\vspace{0.1in}

\section{Preliminaries}
\label{sec:Prelims}

In this section we gather some of the main results
about dyadic analysis in measure metric spaces,
off-diagonal estimates for a family of operators,
and properties of Muckenhoupt and reverse Hölder weight classes.

\subsection{Dyadic Analysis on a Doubling Metric Space}
\label{subsec:Dyadic_on_metric_spaces}

We recall some well-known definitions and facts from dyadic harmonic
analysis as written in \cite{bernicot2016sharp}. For detailed
information on the construction of dyadic systems of cubes in doubling
metric spaces, the interested reader is referred to
\cite{HytKairema} and references therein.

\begin{deff} 
 \label{def:DyadicSystem} 
 A dyadic system of cubes in $(M,\mu)$, with parameters $0 < c_{0} \leq
 C_{0} < \infty$ and $\delta \in (0,1)$, is a family of open subsets
 $\br{Q^{l}_{\alpha}}_{\alpha \in \mathcal{A}_{l}, l \in \mathbb{Z}}$ that
 satisfies the following properties:

\begin{enumerate}
\item[$\bullet$] For each $l \in \mathbb{Z}$, there exists a subset $Z_{l}$
  with $\mu(Z_{l}) = 0$ such that
$$
M = \bigsqcup_{\alpha \in \mathcal{A}_{l}} Q^{l}_{\alpha} \bigsqcup Z_{l};
$$
\item[$\bullet$] If $l \geq k$, $\alpha \in \mathcal{A}_{k}$ and
  $\beta \in \mathcal{A}_{l}$ then either $Q^{l}_{\beta} \subseteq
  Q^{k}_{\alpha}$ or $Q^{k}_{\alpha} \cap Q^{l}_{\beta} = \emptyset$;
\item[$\bullet$] For every $l \in \mathbb{Z}$ and $\alpha \in
  \mathcal{A}_{l}$, there exists a point $z^{l}_{\alpha}$ with the
  property that
$$
B(z^{l}_{\alpha}, c_{0} \delta^{l}) \subseteq Q^{l}_{\alpha} \subseteq
B(z^{l}_{\alpha}, C_{0} \delta^{l}).
$$
\end{enumerate}
\end{deff}

The below theorem asserts the existence of adjacent systems of dyadic
cubes for a doubling metric space. For a proof of this result, refer
to \cite{HytKairema} and references therein.

\begin{thm}[{\cite[Thm. 4.1]{HytKairema}}]
 \label{thm:Dyadic} 
 There exists $0 < c_{0} \leq C_{0} < \infty$, $\delta \in (0,1)$,
 finite constants $K = K(c_{0},C_{0},\delta)$ and $C = C(\delta)$,  and a
 finite collection of dyadic systems $\mathscr{D}^{b}$, $b = 1,\cdots, K$,
 that satisfies the following property. For any ball $B = B(x,r)
 \subseteq M$, there exists $b \in \lb 1, \cdots, K \rb$ and $Q \in
 \mathscr{D}^{b}$ such that 
 \begin{equation*}
   B \subseteq Q \quad and \quad \mathrm{diam}(Q) \leq C r.
 \end{equation*}
\end{thm}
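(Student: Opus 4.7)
The plan is to follow the original argument of Hytönen--Kairema, which in turn refines Christ's classical construction of dyadic cubes on a space of homogeneous type. The construction proceeds in two logical stages: first, building a single system of dyadic cubes satisfying \cref{def:DyadicSystem}; second, producing a finite family of such systems with the covering property asserted in the theorem.

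For the first stage, fix $\delta \in (0,1)$ small enough (to be chosen later). For each level $l \in \mathbb{Z}$, extract a maximal $\delta^l$-separated set $\{z^l_\alpha\}_{\alpha \in \mathcal{A}_l} \subset M$; by maximality, the balls $B(z^l_\alpha, \delta^l)$ cover $M$, while the balls $B(z^l_\alpha, \delta^l/2)$ are pairwise disjoint. The cubes $Q^l_\alpha$ are then produced by Christ's recursive procedure: assign to each center $z^{l+1}_\beta$ a unique ``parent'' center $z^l_\alpha$ minimising $d(z^{l+1}_\beta, z^l_\alpha)$ (using a measurable tie-breaking rule), and define $Q^l_\alpha$ as the union (up to a null set) of all descendants $Q^{l+k}_\gamma$, $k \geq 0$, assigned to $z^l_\alpha$. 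A standard geometric bootstrap using the doubling property \eqref{eqtn:Doubling1} yields constants $0 < c_0 \leq C_0 < \infty$ such that $B(z^l_\alpha, c_0 \delta^l) \subseteq Q^l_\alpha \subseteq B(z^l_\alpha, C_0 \delta^l)$, verifying \cref{def:DyadicSystem}.

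For the second stage, the difficulty is that in a single system $\mathscr{D}$ a given ball $B(x,r)$ may straddle cube boundaries at every level, so no cube of diameter $\simeq r$ contains it. The Hytönen--Kairema remedy is to produce $K$ systems $\mathscr{D}^1, \dots, \mathscr{D}^K$ indexed by a finite set of ``shifts'', obtained by perturbing the reference points $z^l_\alpha$ along finitely many admissible displacement patterns (parameterised, say, by locally chosen elements of a finite combinatorial alphabet determined by $\delta$, $c_0$ and $C_0$). The key observation is that if $\delta^{l+1} \leq r < \delta^l$, then $B(x,r)$ is contained in some cube of level $l-m$ of $\mathscr{D}^b$ for a uniform $m$, provided $x$ lies at distance $\gtrsim \delta^{l-m}$ from the boundary of every level-$(l-m)$ cube of $\mathscr{D}^b$. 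A pigeonholing / covering argument using the doubling property shows that among the $K$ shifted systems at least one always achieves this, which yields the diameter bound $\mathrm{diam}(Q) \leq C(\delta) r$.

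The main obstacle is controlling the cardinality $K$: one must show that a finite number of shift patterns suffices uniformly over all balls $B(x,r) \subseteq M$. This amounts to a careful geometric analysis of how the boundary strips of the Christ cubes propagate between levels, and to a quantitative application of the doubling condition to bound the number of distinct local combinatorial configurations. Once this is in place, the inner/outer ball property for each shifted system is inherited from the first stage, so each $\mathscr{D}^b$ is a dyadic system in the sense of \cref{def:DyadicSystem}, and the theorem follows.
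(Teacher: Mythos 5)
The paper does not actually prove this statement: it is imported verbatim from Hyt\"{o}nen--Kairema \cite{HytKairema}, and the text explicitly defers the proof to that reference. So the only meaningful comparison is between your outline and the argument in \cite{HytKairema}.

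Your first stage (maximal $\delta^l$-separated nets, Christ-type parent assignment, the inner/outer ball property via doubling) is a faithful and essentially correct summary of the construction of a single dyadic system. The problem is the second stage, which is where the entire content of the theorem lives, and there your proposal stops at exactly the point where a proof would have to begin. The sentences ``A pigeonholing / covering argument using the doubling property shows that among the $K$ shifted systems at least one always achieves this'' and ``The main obstacle is controlling the cardinality $K$\dots Once this is in place, \dots the theorem follows'' assert the conclusion rather than derive it. In a general doubling metric space there is no group of translations, so one cannot literally ``shift'' the nets; Hyt\"{o}nen and Kairema instead construct, for each level, a finite collection of alternative choices of center points (selected from a fixed finer net, with $K$ controlled by the geometric doubling constant) together with compatible parent relations, and then verify by an explicit geometric argument --- not mere pigeonholing --- that every ball $B(x,r)$ lies well inside some cube of comparable generation in at least one of the resulting systems. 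None of that is carried out or even precisely formulated in your sketch, and it is not a routine verification: it is the theorem. As written, the proposal is a correct description of the strategy of the cited proof but contains a genuine gap at its central step, so it cannot stand as a proof on its own. Since the paper itself treats this as a black-box citation, the honest options are either to cite \cite{HytKairema} as the paper does, or to reproduce their construction of the adjacent systems in full.
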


From this point forward we fix a dyadic collection
$\mathscr{D} \coloneqq \cup_{b = 1}^{K} \mathscr{D}^{b}$ as in the previous theorem. 
Let $w$ be a weight on $M$.
For $P\in\mathscr{D}$ we denote by $\ell(P)$ the side length of $P$.
The uncentered dyadic maximal function $\dyadicM_{p,w}$ of exponent $p
\in [1,\infty)$ is defined by
\begin{equation*}
  \dyadicM_{p,w} f(x) \coloneqq \sup_{Q\in\mathscr{D}} \left( \frac{1}{w(Q)}\int_{Q} \lvert f(y) \rvert^{p} w(y) \D{y} \right)^{1/p} \1_Q(x),
\end{equation*}
where the notation $\mathbbm{1}_{E}$ is used to denote the
characteristic function of a set $E \subset M$, and $w(E) \coloneqq \int_E w \D{\mu}$.
When $w \equiv 1$, $\dyadicM_{p,w}$ will just be the usual dyadic
maximal function of exponent $p$ and the shorthand notation
$\dyadicM_{p} = \dyadicM_{p,1}$ will be employed. Similarly, we
will also use the notation $\dyadicM_{w} = \dyadicM_{1,w}$.
It is known that  $\dyadicM_{p}$ is of weak-type $(p,p)$ and strong $(q,q)$ for all $q>p$, see \cite{CoifWeiss}.
Moreover, $\dyadicM_w$ is bounded on $L^p(w)$ for all $p \in [1,\infty)$ with a constant independent of the weight, 
\begin{equation}
  \lVert \dyadicM_w f \rVert_{L^p(w)} \le p' \lVert f \rVert_{L^p(w)}.\label{eq:bound_M_w}   
\end{equation}

\subsection{Off-Diagonal Estimates}

In this section, we define three different notions of off-diagonal estimates that
will be used throughout this article. For an extensive and detailed
account of off-diagonal estimates for operator families, the reader is
referred to \cite{AuscherMartellII}. Throughout this section, we will
consider exponents $1 \leq p_{0} < 2 \leq q_{0} \leq \infty$.

\begin{define}[Off-diagonal estimates at scale $\sqrt{t}$]
  \label{def:off-diagonal_ests}
  A family of operators $\{T_t\}_{t>0}$ is said to satisfy $(p_0,q_0)$ off-diagonal estimates
  at scale $\sqrt{t}$
  if for any two balls $B_1,B_2$ of radius $\sqrt{t}$ we have
  \begin{equation*}
    \left(\fint_{B_2} \lvert T_t (f\1_{B_1})\rvert^{q_0} \D{\mu} \right)^{1/q_0} \lesssim \rho\Big(\frac{d(B_1,B_2)}{\sqrt{t}}\Big) \left( \fint_{B_1} \lvert f\rvert^{p_0} \D{\mu} \right)^{1/p_0}
  \end{equation*}
  where $\rho \colon [0,\infty) \to (0,1] $ is a non increasing function such that
  $\rho(0)=1$ and $\lim_{x\to\infty}\lvert x\rvert^a \rho(x)=0$ for some $a \ge 0$.
\end{define}
\begin{remark} Some comments are in order.
  \begin{itemize}
  \item Examples of $\rho$ that we will use are the Gaussian
    $\rho(x) = e^{-c\lvert x\rvert^2}$ and
    $\rho(x) = \langle x \rangle^{-c(\nu+1)}$ where
    $\langle x \rangle^{s} = (1 + \lvert x \rvert^2)^{s/2}$ is the
    Japanese bracket.  For the Gaussian case, the  positive constant $c$ is not relevant and
    may change from line to line.
    See also comments after \cite[Def.~2.1]{AuscherMartellII}.
    For our sparse domination, the choice $\rho(x)=\langle x\rangle^{-2(\nu + 1)}$ suffices.
    
  \item Off-diagonal estimates are stable under composition. That is, if
    $T_{t}$ satisfies $(p_{1},p_{2})$ off-diagonal estimates at scale
    $\sqrt{t}$ and $S_{t}$ satisfies $(p_{2},p_{3})$ off-diagonal
    estimates at scale $\sqrt{t}$ then $S_{t} T_{t}$ will satisfy
    $(p_{1},p_{3})$ off-diagonal estimates at scale $\sqrt{t}$. Again,
    the value of $c$ or $s$ may change.

  \item For $p_{0} \leq p \leq q \leq q_{0}$, H\"{o}lder's inequality
    implies that if an operator family satisfies $(p_{0},q_{0})$
    off-diagonal estimates at scale $\sqrt{t}$ then it will also
    satisfy $(p,q)$ estimates.
    
  \item Off-diagonal estimates for $p\le q$ do not imply $L^p-L^q$ boundness of $T_t$,
    see \cite{AuscherMartellII}.
  \end{itemize}
\end{remark}

In order to apply off-diagonal estimates,
we often need to decompose the support of a function $f$
into finitely overlapping balls with radius to match the scale.

\begin{define}
  We say that a collection of balls $\mathcal{B}$ has finite overlap
  if there exists a finite constant $\Lambda_{\mathcal{B}}$ such that
  \begin{equation*}
    \lVert \sum_{B \in \mathcal{B}} \1_B \rVert_{L^\infty} = \Lambda_{\mathcal{B}} .
  \end{equation*}  
\end{define}

\begin{rmk}\label{rmk:finite_overlap_balls}
  Let $\mathcal{B}$ be a collection of 
  finite overlapping balls
  covering a set $\Omega$.
  Then
  \begin{equation*}
    \sum_{B \in \mathcal{B}} \mu(B) = \int_{\Omega} \sum_{B \in \mathcal{B}} \1_B \D{\mu} \le \Lambda_{\mathcal B} \; \mu(\Omega) .
  \end{equation*}
\end{rmk}

\begin{lemma}\label{lemma:counting_R_balls}
  Let $\Omega \subset M$ be an open set, and 
  let $\mathcal{R}$ be a family of finite overlapping balls, with the same radius, %
  covering $\Omega$. If there exists $m \in \mathbb{N}$ such that
  $m R \supset \Omega$ for all $R \in \mathcal{R}$,
  then for any $f \in L^{p_0}(\Omega)$, $p_0 \ge 1$, we have
  \begin{equation}\label{eq:counting_R_balls}
    \sum_{R\in\mathcal{R}} \left( \fint_R \lvert f\rvert^{p_0} \D{\mu} \right)^{1/p_0} \lesssim m^\nu \left( \fint_{\Omega} \lvert f\rvert^{p_0} \D{\mu} \right)^{1/p_0} .
  \end{equation}  
\end{lemma}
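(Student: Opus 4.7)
The plan is to combine two consequences of the doubling hypothesis — a lower bound on $\abs{R}$ and a bound on the cardinality $N \coloneqq \#\mathcal{R}$ — with a discrete Hölder inequality and the finite overlap estimate of \cref{rmk:finite_overlap_balls}. Write $r$ for the common radius of the balls in $\mathcal{R}$. The hypothesis $\Omega \subset mR$ combined with the polynomial growth estimate \eqref{eqtn:Doubling2} immediately gives
\begin{equation*}
  \abs{\Omega} \le \abs{mR} \lesssim m^{\nu} \abs{R},
  \qquad\text{hence}\qquad
  \abs{R} \gtrsim m^{-\nu}\abs{\Omega}
  \quad\text{for every } R \in \mathcal{R}.
\end{equation*}

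For the cardinality bound I would fix any $y \in \Omega$ and use $y \in mR = B(x_R,mr)$ to conclude that $d(x_R,y) < mr$ for every $R$, so that every ball of $\mathcal{R}$ is contained in the single super-ball $B(y,(m+1)r)$. Invoking finite overlap together with \eqref{eqtn:Doubling2} once more,
\begin{equation*}
  N \cdot \min_{R} \abs{R} \le \sum_{R\in\mathcal{R}} \abs{R} \le \Lambda_{\mathcal{R}}\, \abs{B(y,(m+1)r)} \lesssim m^{\nu} \min_{R} \abs{R},
\end{equation*}
where the last step uses the doubling comparison of $B(y,(m+1)r)$ with any $R$ whose center is within $mr$ of $y$. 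Therefore $N \lesssim m^{\nu}$.

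With these two geometric facts in hand, the discrete Hölder inequality applied to $N$ terms gives
\begin{equation*}
  \sum_{R\in\mathcal{R}} \br{\fint_R \abs{f}^{p_0} \D{\mu}}^{1/p_0}
  \le N^{1/p_0'} \br{\sum_{R\in\mathcal{R}} \fint_R \abs{f}^{p_0} \D{\mu}}^{1/p_0},
\end{equation*}
while the lower bound $\abs{R}^{-1} \lesssim m^{\nu}\abs{\Omega}^{-1}$ together with finite overlap (applied after extending $f$ by zero outside $\Omega$) produce
\begin{equation*}
  \sum_{R\in\mathcal{R}} \fint_R \abs{f}^{p_0} \D{\mu}
  \lesssim \frac{m^{\nu}}{\abs{\Omega}} \sum_{R} \int_R \abs{f}^{p_0} \D{\mu}
  \lesssim \Lambda_{\mathcal{R}}\, m^{\nu} \fint_{\Omega} \abs{f}^{p_0} \D{\mu}.
\end{equation*}
Multiplying through, the two factors of $m^{\nu}$ recombine via Hölder as $m^{\nu/p_0' + \nu/p_0} = m^{\nu}$, which is \eqref{eq:counting_R_balls}. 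The only non-mechanical step is the cardinality bound $N \lesssim m^{\nu}$: this is where the geometric hypothesis $mR \supset \Omega$ enters in an essential way, and once all balls have been confined to a common super-ball, the $m^{\nu}$-rate is forced by doubling and the rest of the argument is routine.
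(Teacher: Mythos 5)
Your proof is correct and follows essentially the same route as the paper: a discrete H\"{o}lder inequality over the $N = \#\mathcal{R}$ terms, the lower bound $\abs{R} \gtrsim m^{-\nu}\abs{\Omega}$ from $\Omega \subset mR$ and \eqref{eqtn:Doubling2}, the cardinality bound $N \lesssim m^{\nu}$, and finite overlap, with the two powers of $m^{\nu}$ recombining exactly as in the paper. The only cosmetic difference is that you obtain $N \lesssim m^{\nu}$ by confining all the balls to a super-ball $B(y,(m+1)r)$, whereas the paper gets it directly from $N \cdot \min_R \abs{R} \le \sum_R \abs{R} \lesssim \abs{\Omega} \lesssim m^{\nu}\min_R\abs{R}$; both are fine.
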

\begin{proof}
  For $p_0 >1$,
  H\"{o}lder's inequality implies that
  \begin{align*}
    \sum_{R \in \mathcal{R}} \br{\dashint_{R} \abs{f}^{p_{0}} \D{\mu}}^{\frac{1}{p_{0}}}
    &\leq \br{\sup_{R \in \mathcal{R}} \frac{1}{\abs{R}}}^{\frac{1}{p_{0}}}
      \br{\sum_{R \in \mathcal{R}} \int_{R} \abs{f}^{p_{0}} \D{\mu}}^{\frac{1}{p_{0}}} \br{\sum_{R \in \mathcal{R}}1}^{\frac{1}{p_{0}'}} \\
    &= \br{\sup_{R \in \mathcal{R}} \frac{\lvert \Omega\rvert}{\abs{R}}}^{\frac{1}{p_{0}}}
      \br{\dashint_{\Omega} \abs{f}^{p_{0}}\D{\mu}}^{\frac{1}{p_{0}}} \br{\# \mathcal{R}}^{\frac{1}{p_{0}'}}.
  \end{align*}
  Since $m R \supset \Omega$ for all $R \in \mathcal{R}$,
  the doubling property implies that
  \begin{equation*}
    \br{\sup_{R \in \mathcal{R}} \frac{\lvert \Omega\rvert}{\abs{R}}}^{\frac{1}{p_{0}}}
    \br{\# \mathcal{R}}^{\frac{1}{p_{0}'}}
    \lesssim \sup_{R \in \mathcal{R}}\frac{\lvert m R\rvert}{\abs{R}} \lesssim m^\nu.
  \end{equation*}
  The case $p_{0} = 1$ is even simpler since it does not require the
  use of H\"{o}lder's inequality nor an estimate on the cardinality
  $\# \mathcal{R}$.
\end{proof}

\begin{remark} \label{rmk:off-diag_larger_scale}
  If $T_s$ satisfies $(p_0,q_0)$ off-diagonal estimates at scale $\sqrt{s}$,
  then it satisfies
  \begin{equation}\label{eq:off-diag_larger_scale}
    \left(\fint_{B(r)} \lvert T_s (f\1_{B_1})\rvert^{q_0} \D{\mu} \right)^{1/q_0} \lesssim \rho\Big(\frac{d(B_1,B(r))}{s}\Big) \left( \fint_{B_1} \lvert f\rvert^{p_0} \D{\mu}\right)^{1/p_0},
  \end{equation}
  for balls $B(r)$ of radius $r \ge \sqrt{s}$ and $B_{1}$ of radius $\sqrt{s}$.
  \begin{proof}[Proof of \eqref{eq:off-diag_larger_scale}]
    It's enough to cover the larger ball $B(r)$ with a collection $\mathcal{B}$ of smaller, finite overlapping
    balls of radius $\sqrt{s}$.
    \begin{align*}
      \left( \fint_{B(r)} \lvert T_s(f\mathbbm{1}_{B_{1}}) \rvert^{q_0} \D{\mu} \right)^{1/q_0}
      & = \left( \sum_{B \in \mathcal{B}} \frac{\lvert B\rvert}{\lvert B(r)\rvert } \fint_{B} \lvert T_s(f\mathbbm{1}_{B_{1}}) \rvert^{q_0} \D{\mu} \right)^{1/q_0} \\
      & \le  \left( \sum_{B \in \mathcal{B}} \frac{\lvert B\rvert}{\lvert B(r)\rvert } \right)^{1/q_0}  \left( \sup_{B \in \mathcal{B}} \fint_{B} \lvert T_s(f\mathbbm{1}_{B_{1}}) \rvert^{q_0} \D{\mu} \right)^{1/q_0} \\
      \text{( by \cref{rmk:finite_overlap_balls} )} & \le  \Lambda_{\mathcal{B}}^{1/q_0}  \sup_{B \in \mathcal{B}} \left( \fint_{B} \lvert T_s(f\mathbbm{1}_{B_{1}}) \rvert^{q_0} \D{\mu} \right)^{1/q_0} 
    \end{align*}

    We can use off-diagonal estimates at scale $\sqrt{s}$,
    \begin{equation*}
      \sup_{B \in \mathcal{B}} \left(  \fint_{B} \lvert T_s(f\mathbbm{1}_{B_{1}}) \rvert^{q_0} \D{\mu} \right)^{1/q_0} \lesssim \sup_{B \in \mathcal{B}} \rho\left(\frac{d(B,B_1)}{s}\right)  \left( \fint_{B_1} \lvert f \rvert^{p_0} \D{\mu} \right)^{1/p_0} .
    \end{equation*}      
    and 
    the supremum of $\rho(d(B,B_1)/s)$ over $B \in \mathcal{B}$ is at most
    $\rho\big(d(B(r),B_1)/s\big)$.
  \end{proof}

\end{remark}

We denote the generated averaging operator by $P_t \coloneqq e^{-tL}$.
This is used as an approximation of the identity at scale $\sqrt{t}$,
since for any $p \in (p_0,q_0)$ we have
\begin{equation*}
  \lim_{t \to 0}  \lVert f - e^{-tL}f \rVert_{L^p} = 0 \quad\text{ and } \quad \lim_{ t \to \infty} \lVert e^{-tL} f \rVert_{L^p} = 0 .
\end{equation*}

For $\alpha > 0$, we also consider the family of operators
$Q_t^{(\alpha)} \coloneqq c_{\alpha}^{-1}(tL)^{\alpha}
e^{-tL}$ with $c_{\alpha} = \int^{\infty}_{0} s^{\alpha} e^{-s}\frac{\D{s}}{s}$.
These operators will satisfy an adapted Calderón reproducing formula
for functions in $f \in L^p$ with $p \in (p_0,q_0)$, namely
\begin{equation*}%
  f = \int_0^\infty Q_t^{(\alpha)} f \frac{\D{t}}t .
\end{equation*}
Also define
$$
P_{t}^{(\alpha)} := \int^{\infty}_{1} Q_{st}^{(\alpha)} \, \frac{\D{s}}{s}.
$$
Then $P_t^{(\alpha)}$ is related to the operator
$Q_{t}^{(\alpha)}$ through $t\partial_t P_t^{(\alpha)} = - Q_t^{(\alpha)}$.
We also have that as $L^{p}$-bounded operators,
\begin{equation*}
  P_t^{(\alpha)} = \mathrm{Id} + \int_0^t Q_s^{(\alpha)} \frac{\D{s}}s . 
\end{equation*}

\begin{remark}
  Note that   for any integer $N \in\mathbb{N}$
  the operators $P_t^{(N)}$ and $Q_t^{(N)}$ satisfy off-diagonal estimates at scale $\sqrt{t}$ for all $t>0$.
\end{remark}

\begin{define}[Off-diagonal estimates at all scales]
  \label{def:OffDiagonalAllScales}
  A family of operators $\{T_t\}_{t>0}$ is said to satisfy $(p_0,q_0)$
  off-diagonal estimates at all scales
  if for all balls $B_{1}, \, B_{2}$ of radius $r_1,r_2$ we have
  \begin{equation*}
    \norm{T_{t}}_{L^{p_{0}}(B_{1}) \rightarrow L^{q_{0}}(B_{2})}
    \lesssim \big\lvert B_{1,\sqrt{t}}\big\rvert^{-\frac{1}{p_{0}}}
    \big\lvert B_{2,\sqrt{t}}\big\rvert^{\frac{1}{q_{0}}} \rho \br{\frac{d(B_{1},B_{2})}{\sqrt{t}}},
  \end{equation*}
  where
  $B_{i,\sqrt{t}} := (\sqrt{t}/r_i) B_{i}$ for $i = 1, \, 2$
  and 
  $\rho \colon [0,\infty) \to (0,1] $ is a non increasing function such that
  $\rho(0)=1$ and $\lim_{x\to\infty}\lvert x\rvert^a \rho(x)=0$ for some $a \ge 0$.
\end{define}

It is trivial to see that off-diagonal estimates at all scales implies off-diagonal estimates at scale $\sqrt{t}$. This
stronger condition is used in our cancellation hypothesis, Assumption
\ref{assum:S}(c).

Let $\psi : (0,\infty) \rightarrow (0,\infty)$ be a non-decreasing
function. A space of homogeneous type $(M,\mu)$ is said to be of
$\psi$-growth if
\begin{equation*}
  \abs{B(x,r)} = \mu(B(x,r)) \simeq \psi(r)
\end{equation*}
uniformly for all $x \in M$ and $r > 0$. Notice that this condition is
weaker than \eqref{eqtn:Growth}. For spaces of $\psi$-growth, one encounters
another notion of off-diagonal estimate. These types of estimates are studied in
\cite{AuscherMartellII}. 

\begin{define}[Full off-diagonal estimates]
  \label{def:FullOffDiag}
 Suppose that $(M,\mu)$ is of $\psi$-growth. A family of operators
 $\{T_t\}_{t>0}$ is said to satisfy $(p_0,q_0)$
  full off-diagonal estimates if for all closed sets $E, \, F$ we have
  \begin{equation*}
   \norm{T_{t}}_{L^{p}(E) \rightarrow L^{p}(F)}
    \lesssim \psi(\sqrt{t})^{\frac{1}{q_{0}} - \frac{1}{p_{0}}}\rho
    \br{\frac{d(E,F)}{\sqrt{t}}},
  \end{equation*}
  where $\rho \colon [0,\infty) \to (0,1] $ is a non increasing function such that
  $\rho(0)=1$ and $\lim_{x\to\infty}\lvert x\rvert^a \rho(x)=0$ for some $a \ge 0$.
\end{define}

\begin{rmk}
  \label{rmk:EquivalentOD}
  It is not difficult to show that for spaces of $\psi$-growth,
  the three different notions of off-diagonal estimates, Definitions
  \ref{def:off-diagonal_ests}, \ref{def:OffDiagonalAllScales} and
  \ref{def:FullOffDiag}, are all equivalent for a particular choice of $\rho$.
\end{rmk}

\subsection{Weight classes}
\label{subsec:weight_classes}
We recall some basic properties of the Muckenhoupt and
reverse H\"{o}lder weight classes as defined in the
introduction. Refer to \cite{Johnson1991} for further information.

\begin{lem}
  \label{lem:WeightProperties}
  The following properties of the weight classes $A_{p}$ and $RH_{q}$ are true.
  \begin{enumerate}
  \item[(i)] For $p \in (1,\infty)$, a weight $w$ will be contained in
    the class $A_{p}$ if and only if $w^{1 - p'} \in A_{p'}$. Moreover,
    \begin{equation*}
      \big[w^{1-p'}\big]_{A_{p'}} = \big[w\big]^{p' - 1}_{A_{p}}.
    \end{equation*}
  \item[(ii)] For $q \in [1,\infty]$ and $s \in [1,\infty)$, a weight
    $w$ will be contained in $A_{q} \cap RH_{s}$ if and only if $w^{s}
    \in A_{s(q - 1) + 1}$. Moreover,
    \begin{equation*}
      \max\{\brs{w}^{s}_{A_{q}} , \brs{w}^{s}_{RH_{s}}\} \le \brs{w^{s}}_{A_{s(q - 1) + 1}}    
      \leq \brs{w}^{s}_{A_{q}} \brs{w}^{s}_{RH_{s}}.
    \end{equation*} 

  \end{enumerate}
\end{lem}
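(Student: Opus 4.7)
The plan is to verify both statements by purely algebraic manipulations of the defining averages, using only Hölder/Jensen inequalities together with the elementary identities $(p-1)(p'-1)=1$ and $(1-p)(1-p')=1$.

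For part (i), I would set $\sigma := w^{1-p'}$ and observe, via $(1-p)(1-p')=1$, that $\sigma^{1-p}=w$. Then the definition of the $A_{p'}$ characteristic gives
\[
[\sigma]_{A_{p'}}=\sup_Q\Big(\fint_Q w^{1-p'}\Big)\Big(\fint_Q w\Big)^{p'-1}.
\]
On the other hand, raising $[w]_{A_p}$ to the power $p'-1$ and invoking $(p-1)(p'-1)=1$ to simplify the exponent on $\fint_Q w^{1-p'}$ yields exactly the same expression. This simultaneously proves the equivalence $w\in A_p \Leftrightarrow \sigma\in A_{p'}$ and the quantitative identity.

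For part (ii), I would set $r:=s(q-1)+1$, so that $r-1=s(q-1)$ and $1-r' = -1/(q-1) = 1-q'$; hence $w^{s(1-r')}=w^{1-q'}$. Writing out the $A_r$ characteristic of $w^s$ and factoring the integrand produces, for every cube $Q$, the key identity
\[
\Big(\fint_Q w^s\Big)\Big(\fint_Q w^{1-q'}\Big)^{s(q-1)}
=\Big[\Big(\fint_Q w^s\Big)^{1/s}\Big(\fint_Q w\Big)^{-1}\Big]^s\cdot \Big[\Big(\fint_Q w\Big)\Big(\fint_Q w^{1-q'}\Big)^{q-1}\Big]^s,
\]
in which the first bracket is the local $RH_s$-quotient and the second is the local $A_q$-quotient of $w$. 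Since the supremum of a product is bounded by the product of suprema, the upper bound $[w^s]_{A_r}\le [w]_{A_q}^s\,[w]_{RH_s}^s$ follows immediately.

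The only mildly subtle point, and the one I would flag as the main obstacle, is the reverse inequality, because a supremum of a product is strictly smaller than the product of the suprema in general. I would handle the two lower bounds in (ii) separately by Jensen/Hölder. Jensen's inequality gives $(\fint_Q w^s)^{1/s}\ge \fint_Q w$, so the first bracket in the factorization is $\ge 1$; taking the supremum of what remains yields $[w]_{A_q}^s \le [w^s]_{A_r}$. Conversely, Hölder's inequality gives the well-known lower bound $(\fint_Q w)(\fint_Q w^{1-q'})^{q-1}\ge 1$, so the second bracket is $\ge 1$; taking the supremum of what remains yields $[w]_{RH_s}^s\le [w^s]_{A_r}$. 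Combining these three inequalities produces the stated bounds, together with the qualitative characterization $w\in A_q\cap RH_s \Leftrightarrow w^s\in A_{s(q-1)+1}$.
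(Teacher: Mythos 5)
Your argument is correct, and it is essentially the standard proof of this lemma; the paper itself does not prove it but simply cites Johnson--Neugebauer \cite{Johnson1991}, where the same algebraic factorization of the local $A_{s(q-1)+1}$ quotient of $w^{s}$ into the $s$-th powers of the local $A_{q}$ and $RH_{s}$ quotients of $w$, combined with Jensen and H\"older to show each factor is at least $1$, is the underlying mechanism. Two minor points: the intermediate identity should read $s(1-r') = -1/(q-1) = 1-q'$ rather than $1-r' = -1/(q-1)$ (your conclusion $w^{s(1-r')} = w^{1-q'}$ is nevertheless the right one, and the displayed factorization checks out); and as written the computation covers $q \in (1,\infty)$, so the endpoint cases $q=1$ and $q=\infty$ in the statement require the usual conventions for $[w]_{A_{1}}$ and $[w]_{A_{\infty}}$, after which the same argument goes through (in the paper the lemma is only ever invoked with $q = p/p_{0} \in (1,\infty)$, so this is harmless).
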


For $1\leq p_{0} < 2 < q_{0} \leq \infty$ and  $p \in (p_{0},q_{0})$ define
$$
\phi(p) := \br{\frac{q_{0}}{p}}' \br{\frac{p}{p_{0}} - 1} + 1.
$$
The dependence of $\phi$ on $p_{0}$ and $q_{0}$ will be kept
implicit. From the previous lemma, we get that a weight $w$ will be contained in the class
$A_{\frac{p}{p_{0}}} \cap RH_{(\frac{q_{0}}{p})'}$ if and only if
$w^{(\frac{q_{0}}{p})'}$ is contained in $A_{\phi(p)}$ and it will be
true that
\begin{equation}
  \label{eqtn:WeightProperty}
\brs{w^{(\frac{q_{0}}{p})'}}_{A_{\phi(p)}} \leq
\br{\brs{w}_{A_{\frac{p}{p_{0}}}}  \brs{w}_{RH_{(\frac{q_{0}}{p})'}}}^{(\frac{q_{0}}{p})'}.
\end{equation}

In the article \cite{AuscherMartellIGeneral}, the authors P. Auscher and
J. M. Martell proved a
restricted range extrapolation result that allowed one to obtain
$L^{p}(w)$-boundedness for the full range of $p \in (p_{0},q_{0})$ and
$w \in A_{\frac{p}{p_{0}}} \cap RH_{(\frac{q_{0}}{p})'}$ directly
from the $L^{q}(w)$-boundedness for all $w \in A_{\frac{q}{p_{0}}} \cap RH_{(\frac{q_{0}}{q})'}$ of a single index
$q \in (p_{0},q_{0})$. In their result, they do not state
the dependence of the bound on the weight characteristic
$\brs{w^{(\frac{q_{0}}{p})'}}_{\phi(p)}$. However, through careful
inspection of their proof and by tracing the relevant constants, it is not difficult to see that their
extrapolation result will have the following sharp dependence on this
weight characteristic.

As in \cite{cruz2004extrapolation},
$\mathcal{F}$ denotes a family of ordered pairs of non-negative,
measurable functions $(f, g)$.

\begin{thm}[Sharp Restricted Range Extrapolation {\cite[Thm.~4.9]{AuscherMartellIGeneral}}] 
  \label{thm:Extrapolation} 
  Let $0 < p_{0} < q_{0} \leq \infty$. Suppose that there exists
  $q$ with $p_{0} \leq q < q_{0}$
  such that for $(f,g) \in \mathcal{F}$,  
  \begin{equation*}
    \norm{f}_{L^{q}(w)} \leq C \brs{w^{(\frac{q_{0}}{q})'}}_{A_{\phi(q)}}^{\alpha} \norm{g}_{L^{q}(w)}
    \quad \text{ for all } w\in A_{\frac{q}{p_{0}}} \cap RH_{(\frac{q_{0}}{q})'},
  \end{equation*}
  for some $\alpha > 0$ and $C > 0$ independent of the weight.
  Then, for all $p_{0} < p < q_{0}$ and $(f,g) \in \mathcal{F}$ we have
  
  \begin{equation*}
    \norm{f}_{L^{p}(w)}
    \leq C' \brs{w^{(\frac{q_{0}}{p})'}}_{A_{\phi(p)}}^{\beta(p,q)\cdot\alpha} \norm{g}_{L^{p}(w)}
    \quad \text{ for all } w \in A_{\frac{p}{p_{0}}} \cap RH_{(\frac{q_{0}}{p})'},
  \end{equation*}
  
  where $\beta(p,q) \coloneqq \max \br{1, \frac{(q_{0} - p)(q -
      p_{0})}{(q_{0} - q)(p - p_{0})}}$ and $C' > 0$ is independent
  of the weight.
\end{thm}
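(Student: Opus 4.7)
The plan is to follow the strategy of the original extrapolation theorem in \cite{AuscherMartellIGeneral}, which is based on Rubio de Francia's iteration algorithm, but keep explicit track of every appearance of the weight characteristic. By \cref{lem:WeightProperties}(ii), the map $w \mapsto w^{(q_{0}/p)'}$ sends $A_{p/p_{0}} \cap RH_{(q_{0}/p)'}$ bijectively onto $A_{\phi(p)}$, with the sharp quantitative comparison \eqref{eqtn:WeightProperty}. This lets us reformulate both the hypothesis and the conclusion as quantitative weighted inequalities governed by a single $A_{\phi(p)}$-type characteristic, thereby reducing the restricted-range extrapolation to a question about sharp Muckenhoupt extrapolation after a suitable change of exponent.

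The reduction proceeds by splitting into two cases according to whether $p>q$ or $p<q$. In the case $p > q$, one writes the $L^p(w)$-norm via duality against a unit vector $h \in L^{(p/q)'}(w)$, normalizes, and then applies the Rubio de Francia algorithm to construct an $A_{1}$-dominating weight $Rh$ using the Hardy--Littlewood maximal operator in the weighted space $L^{(p/q)'}(w^{(q_{0}/p)'})$. Substituting $w \cdot (Rh)^{(p/q)'-?}$ into the hypothesis at exponent $q$ yields the desired estimate at exponent $p$; the sharp tracking comes from combining Buckley's bound $\|\mathcal{M}\|_{L^{r}(u)} \lesssim [u]_{A_{r}}^{1/(r-1)}$ with Lemma \ref{lem:WeightProperties}(i), (ii). The case $p < q$ is handled analogously, replacing the original weight by its dual weight $\sigma$ and invoking the adjoint form of Rubio de Francia's algorithm. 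In both cases, one ends with a product of $A_{\phi(p)}$-characteristics raised to a power that one collects and identifies.

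The final, more delicate, step is the bookkeeping that identifies the accumulated exponent with $\beta(p,q) \cdot \alpha$. Each application of Buckley's inequality contributes a factor $1/(\phi(q)-1)$, the Rubio de Francia iterator contributes one extra weight power, and the exponential relationship $\phi(p) = (q_{0}/p)' (p/p_{0} - 1) + 1$ between $p$, $p_{0}$, $q_{0}$ is what makes the powers collapse into the clean expression $\max\bigl(1, \tfrac{(q_{0}-p)(q-p_{0})}{(q_{0}-q)(p-p_{0})}\bigr)$. Concretely, one checks that when $p>q$ the exponent is $(\phi(q)-1)/(\phi(p)-1) \cdot \alpha$ multiplied by the extrapolation correction, and an elementary algebraic computation using $\phi(r)-1 = (q_{0}/r)'(r/p_{0}-1)$ reduces this to $\beta(p,q) \cdot \alpha$; the case $p<q$ gives exponent $1 \cdot \alpha$, matching the $\max$.

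The main obstacle is not conceptual but bookkeeping: one must verify that no hidden factors of $[w]$ have been absorbed into the implicit constants at any step of the Rubio de Francia construction, especially at the moment when Buckley's sharp bound is invoked on the transformed weight $w^{(q_{0}/p)'}$. Provided this tracking is carried out faithfully, the argument of \cite[Thm.~4.9]{AuscherMartellIGeneral} produces exactly the claimed dependence $\bigl[w^{(q_{0}/p)'}\bigr]_{A_{\phi(p)}}^{\beta(p,q)\alpha}$.
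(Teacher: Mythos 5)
Your overall strategy is the right one, and it is in fact the only ``proof'' the paper itself offers: Theorem \ref{thm:Extrapolation} is not proved in the text but is obtained by rerunning the Rubio de Francia argument of \cite[Thm.~4.9]{AuscherMartellIGeneral} while tracking the weight characteristic, exactly as you propose, with Lemma \ref{lem:WeightProperties}(ii) and \eqref{eqtn:WeightProperty} used to translate between $A_{p/p_{0}}\cap RH_{(q_{0}/p)'}$ and $A_{\phi(p)}$. So there is no conflict of method.

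However, the quantitative bookkeeping --- which is the entire content of the theorem as stated --- is wrong as you have written it. Since $\phi(r)-1=\frac{q_{0}}{q_{0}-r}\cdot\frac{r-p_{0}}{p_{0}}$ is increasing in $r$, one has $\frac{\phi(q)-1}{\phi(p)-1}=\frac{(q_{0}-p)(q-p_{0})}{(q_{0}-q)(p-p_{0})}>1$ exactly when $p<q$; hence $\beta(p,q)=1$ for $p\geq q$ (upward extrapolation carries the exponent $\alpha$ unchanged) and $\beta(p,q)=\frac{\phi(q)-1}{\phi(p)-1}$ for $p<q$ (downward extrapolation is where the loss occurs). You assert the opposite assignment: that $p>q$ produces the ratio $\frac{\phi(q)-1}{\phi(p)-1}$ and that $p<q$ produces $1$. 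Carried through, your sketch would give an exponent strictly smaller than $\alpha$ in one regime and would miss the genuine loss in the other, so it does not reproduce $\beta(p,q)\cdot\alpha$. In addition, the one step where the sharp power is actually generated --- the precise rescaled weight $w\cdot(Rh)^{\gamma}$ that is fed back into the hypothesis at exponent $q$ --- is left with an undetermined exponent (your ``$(p/q)'-?$''), and Buckley's bound for the maximal operator must be applied in the correct dual weighted space ($L^{(p/q)'}(w)$ when $p>q$, and on the $\sigma$-side when $p<q$) for the powers to collapse. Until that substitution is pinned down and the two cases are matched to the correct halves of the $\max$, the argument does not establish the stated exponent.
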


\section{Applications}
\label{sec:Applications}

In this section, we consider two distinct applications of our
quadratic sparse domination result and Corollary \ref{cor:Weighted}. For the first application, weighted estimates for square functions associated with
divergence form elliptic operators will be proved. For the particular case of the Laplacian
operator $-\Delta$, this will allow us to recover the estimates from \cite{MR2770437} that are known to be sharp in the $A_{p}$-characteristic constant. The second example that we will look at are square
functions associated with the Laplace--Beltrami operator on a
Riemannian manifold.

\subsection{Elliptic Operators}

Fix $n \in \N \setminus \lb 0 \rb$
and consider the Euclidean space $\mathbb{R}^n$ with the Lebesgue measure.
This is a space of $\psi$-growth, so all definitions of off-diagonal estimates
are equivalent, see \cref{rmk:EquivalentOD}.

Let $A$ be a $n \times n$
matrix-valued function on $\R^{n}$ that is bounded and elliptic in the
sense that
$$
\mathrm{Re} \langle A(x) \xi, \xi
  \rangle_{\C^{n}} \geq \lambda \abs{\xi}^{2},
$$
for some $\lambda > 0$, for all $\xi, \, x \in
\R^{n}$. Consider the divergence
form elliptic operator
$$
L = - \mathrm{div} A \nabla,
$$
defined through its corresponding sesquilinear form as a densely
defined and maximally accretive operator on $L^{2}(\R^{n})$. The
operator $L$ generates an analytic semigroup $\lb e^{-z L} \rb_{z \in
  \Sigma_{\pi/2 - \theta}}$, where
$$
\theta \coloneqq \sup \lb \abs{\mathrm{arg} \langle L f, f \rangle} : f \in \mathcal{D}_{2}(L) \rb.
$$
Let
$g_{L}$ and $G_{L}$ denote the square function operators associated with
$L$ defined by
$$
g_{L}f \coloneqq \br{\int^{\infty}_{0} \abs{(t L)^{\frac{1}{2}} e^{-t L}}^{2} \,
  \frac{\D{t}}{t}}^{\frac{1}{2}} \quad \text{and} \quad G_{L}f \coloneqq \br{\int^{\infty}_{0}
  \abs{\sqrt{t} \nabla e^{-t L}f}^{2} \, \frac{\D{t}}{t}}^{\frac{1}{2}}.
$$
In the
articles \cite{AuscherMartellII} and \cite{AuscherMartellIII},
off-diagonal estimates for operator families associated with these
square functions were
studied in great detail. The below proposition outlines some
properties of such off-diagonal estimates that will be required in
order to apply Corollary \ref{cor:Weighted} to these two square functions.

\begin{prop}[{\cite[Prop.~3.3]{AuscherMartellIII}}]
 \label{prop:OffDiagonalIntervals} 
 For $m \in \N$ and $0 < \mu < \pi/2 - \theta$, there exists maximal intervals
 $\mathcal{J}^{m}(L)$ and $\mathcal{K}^{m}(L)$ in $[1,\infty]$ satisfying the below properties.\\
 
 \begin{enumerate}
 \item[$\bullet$] If $p_{0}, \, q_{0} \in \mathcal{J}^{m}(L)$ with $p_{0}\leq q_{0}$
   then $\lb (z L)^{m} e^{-z L} \rb_{z \in \Sigma_{\mu}}$
   satisfies $(p_{0},q_{0})$ full off-diagonal estimates. \\

 \item[$\bullet$] If $p_{0}, \, q_{0} \in \mathcal{K}^{m}(L)$ with $p_{0}\leq q_{0}$
   then $\lb \sqrt{z} \nabla (z L)^{m} e^{-z L} \rb_{z \in \Sigma_{\mu}}$
   satisfies $(p_{0},q_{0})$ full off-diagonal estimates. \\

 \item[$\bullet$] The interiors $\mathrm{int} \, \mathcal{J}^{m}(L)$
   and $\mathrm{int} \, \mathcal{K}^{m}(L)$ are independent of $m \in
   \N$. \\

 \item[$\bullet$] The inclusion $\mathcal{K}^{m}(L) \subseteq
   \mathcal{J}^{m}(L)$ is satisfied for any $m \in \N$. \\

 \item[$\bullet$] The point $p=2$ is contained in  $\mathcal{K}^{m}(L)$.  
 \end{enumerate}
 \end{prop}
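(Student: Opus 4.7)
The plan is to follow the strategy of Auscher--Martell. First, I would \emph{define} the intervals as the maximal subsets of $[1,\infty]$ such that the stated off-diagonal property holds for every admissible pair with both endpoints in the set. That these form genuine intervals then follows from interpolation: Stein's analytic interpolation theorem applied to the holomorphic family $z \mapsto (zL)^m e^{-zL}$ on $\Sigma_\mu$, combined with Riesz--Thorin between endpoint full off-diagonal estimates, produces all intermediate exponents. The analogous argument works for $\mathcal{K}^m(L)$ using the family $z \mapsto \sqrt{z}\nabla (zL)^m e^{-zL}$.

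The starting point $2 \in \mathcal{K}^m(L)$ is the classical Gaffney--Davies estimate. Given a Lipschitz weight $\phi$ with $\|\nabla\phi\|_\infty \le 1$, I would conjugate to form $L_\eta \coloneqq e^{\eta\phi} L e^{-\eta\phi}$; expanding the sesquilinear form shows that $L_\eta$ differs from $L$ by terms of order $\eta$ and $\eta^2$, yielding the growth bound $\|e^{-zL_\eta}\|_{2 \to 2} \le e^{C\eta^2|z|}$. Specialising $\phi(x) = d(x,E)$ and optimising in $\eta$ produces Gaussian $L^2$ off-diagonal decay for $e^{-zL}$, which transfers to $(zL)^m e^{-zL}$ via Cauchy's integral formula on the sector, and the matching gradient bound at $p = 2$ follows from a Caccioppoli inequality on annular neighbourhoods of the target set. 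Independence of the interiors on $m$ I would obtain from the composition identity $(zL)^{m'} e^{-zL} = c_{m,m'}\,(zL)^{m'-m} e^{-zL/2} \cdot (zL)^m e^{-zL/2}$ (both factors belonging to the same family up to constants), together with the preservation of full off-diagonal estimates under composition when the intermediate exponent matches; the reverse direction, passing from higher $m'$ back to lower $m$, uses Cauchy's integral formula applied to the holomorphic family $z \mapsto (zL)^k e^{-zL}$ to express a lower power as a contour integral of higher ones.

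The most delicate step is the inclusion $\mathcal{K}^m(L) \subseteq \mathcal{J}^m(L)$. For $m \ge 1$, integration by parts gives the factorisation $(zL)^m e^{-zL} f = -\mathrm{div}\bigl(zA\nabla (zL)^{m-1} e^{-zL} f\bigr)$, so that the boundedness of $A$ together with duality reduces the semigroup off-diagonal estimate at index $m$ to a gradient off-diagonal estimate at index $m-1$, which pivots on the $p = 2$ endpoint and then bootstraps across the interval. For $m = 0$ the inclusion is subtler: here one must convert bounds on $\sqrt{z}\,\nabla e^{-zL} f$ into bounds on $e^{-zL} f$ itself, and this requires a Poincar\'e-type argument at the natural scale $\sqrt{z}$, controlling the mean of $e^{-zL} f$ over off-diagonal annuli with the help of the ellipticity of $A$. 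Performing this Poincar\'e step uniformly for all $p$ in the interior of $\mathcal{K}^m(L)$ is where I expect the main technical obstacle; Auscher--Martell handle it through their systematic self-improvement machinery for full off-diagonal estimates, which is the ingredient I would import wholesale rather than reprove.
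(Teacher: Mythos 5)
This statement is not proved in the paper at all: it is quoted verbatim from \cite{AuscherMartellIII}*{Prop.~3.3} and used as a black box (the only related argument the authors supply is the separate Remark~\ref{rmk:HigherOrder}, which derives the inclusions $\mathcal{J}^{0}(L) \subset \mathcal{J}^{1}(L) \subset \mathcal{J}^{m}(L)$ by composition). So there is no internal proof to compare against; your proposal has to be measured against the Auscher--Martell source. Against that benchmark, several of your ingredients are the right ones: the Davies perturbation $L_{\eta} = e^{\eta\phi} L e^{-\eta\phi}$ with $\phi(x) = d(x,E)$ and optimisation in $\eta$ is indeed how the $L^{2}$ Gaffney estimates (hence $2 \in \mathcal{K}^{m}(L)$, via Caccioppoli for the gradient) are obtained, and the $m$-independence of the interiors does come from composition in one direction and a Cauchy integral representation of lower powers by higher ones in the other. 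Two caveats: defining the intervals as ``maximal sets where the property holds'' and then invoking Stein/Riesz--Thorin does not immediately give an interval, because full off-diagonal estimates are a two-parameter family of $L^{p}(E) \to L^{q}(F)$ bounds with decay factors, and interpolating them requires the dedicated composition/interpolation lemmas for off-diagonal families from \cite{AuscherMartellII}, not the classical interpolation theorems applied naively.

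The genuine gap is in your treatment of $\mathcal{K}^{m}(L) \subseteq \mathcal{J}^{m}(L)$. The identity $(zL)^{m} e^{-zL} f = -\operatorname{div}\br{zA\nabla (zL)^{m-1} e^{-zL} f}$ does not reduce the semigroup estimate at index $m$ to a gradient estimate at index $m-1$: the operator $\operatorname{div}$ is not bounded on $L^{q}$, and removing it by duality transfers the burden to gradient bounds for the \emph{adjoint} family $\sqrt{\bar z}\nabla e^{-\bar z L^{*}}$, i.e.\ to the interval $\mathcal{K}(L^{*})$, which is a priori unrelated to $\mathcal{K}(L)$ (the duality relation links $\mathcal{J}(L)$ with $\mathcal{J}(L^{*})$, not $\mathcal{K}(L)$ with itself). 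Moreover you concede that the $m=0$ case --- converting bounds on $\sqrt{z}\nabla e^{-zL}f$ into bounds on $e^{-zL}f$ --- would be handled by ``importing wholesale'' the Auscher--Martell self-improvement machinery, which is precisely the content of the proposition being proved; as written, the hardest bullet of the statement is deferred rather than established. A correct route (the one in the source) deduces $\mathcal{K}^{m}(L) \subseteq \mathcal{J}^{m}(L)$ from the interior characterisations of the endpoints of the two intervals together with $L^{p}$--$L^{2}$ hypercontractivity of the semigroup, not from the divergence-form factorisation.
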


\vspace*{0.1in}

\begin{rmk}
  \label{rmk:HigherOrder}
  Observe that for any $m \geq 1$, $\mathcal{J}^{1}(L) \subset
  \mathcal{J}^{m}(L)$. This follows from the decomposition
  $$
  (t L)^{m} e^{-t L} = (t L) e^{-t L/m} \cdots (t L) e^{-t L / m},
  $$
  the partition
  $$
  [p_{0},q_{0}] = [p_{0},p^{(1)}] \cup [p^{(1)},p^{(2)}] \cup \cdots \cup
  [p^{(m-1)},q_{0}]
  $$
  where $p^{(i)} \coloneqq p_{0} + i (q_{0} - p_{0})/m$, and the property that
  full off-diagonal estimates are stable under composition
  (c.f. {\cite[Thm.~2.3~(b)]{AuscherMartellIII}}).%

  It is also not difficult to see that $\mathcal{J}^{0}(L) \subset
  \mathcal{J}^{1}(L)$. Indeed, consider the expression
  $$
  t L e^{-t L} = e^{-\frac{t}{3}L} \cdot (t L) e^{-\frac{t}{3} L}
  \cdot e^{-\frac{t}{3} L}.
  $$
  For $p_{0}, \, q_{0} \in \mathcal{J}^{0}(L)$ with $p_{0} < 2 < q_{0}$,
  H\"{o}lder's inequality implies that $e^{-\frac{t}{3}L}$ satisfies
  both $(p_{0},2)$ and $(2,q_{0})$ full off-diagonal estimates. It is
  also well-known that $t L e^{-\frac{t}{3} L}$ satisfies $(2,2)$ full
  off-diagonal estimates. The stability of full off-diagonal estimates under
  composition then implies that $t L e^{-t L}$ satisfies
  $(p_{0},q_{0})$ full off-diagonal estimates.
\end{rmk}

 Applying Corollary
\ref{cor:Weighted} to the operators $L$ and $g_{L}$ will produce the
following weighted result.

\begin{prop} 
 \label{prop:Elliptic} 
 Let $p_{0}, \, q_{0} \in \mathcal{J}^{0}(L)$ with $p_{0} < 2 <  q_{0}$. Then, for any $p \in
 (p_{0}, q_{0})$ and $w \in A_{\frac{p}{p_{0}}} \cap RH_{(\frac{q_{0}}{p})'}$,
 $$
\norm{g_{L}}_{L^{p}(w)} \lesssim \br{\brs{w}_{A_{\frac{p}{p_{0}}}}
  \cdot \brs{w}_{RH_{(\frac{q_{0}}{p})'}}}^{\gamma(p)},
$$
where $\gamma(p)$ is as defined in Corollary \ref{cor:Weighted}.
\end{prop}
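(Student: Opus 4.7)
The strategy is to verify that $L = -\mathrm{div}\, A\nabla$ and $S = g_L$ satisfy \cref{assum:L} and \cref{assum:S} for the chosen pair $p_0, q_0$, and then to invoke \cref{cor:Weighted} directly. Since $\R^n$ with Lebesgue measure is of $\psi$-growth with $\psi(r) = r^n$, the geometric hypotheses \eqref{eqtn:Doubling1}--\eqref{eqtn:Growth} hold automatically, and the three notions of off-diagonal estimate introduced in \cref{def:off-diagonal_ests,def:OffDiagonalAllScales,def:FullOffDiag} all coincide by \cref{rmk:EquivalentOD}. Once both assumptions are checked, \cref{cor:Weighted} will apply verbatim and yield the stated weighted bound for $g_L$.

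For \cref{assum:L}, maximal accretivity of $L$ with angle $\theta < \pi/2$, density of $\mathcal{D}_2(L) \subset L^2(\R^n)$, and injectivity (if $Lf = 0$ for $f \in L^2$, the form ellipticity forces $\nabla f \equiv 0$, hence $f = 0$) are classical properties of divergence-form elliptic operators. Since $p_0, q_0 \in \mathcal{J}^0(L)$, \cref{prop:OffDiagonalIntervals} with $m = 0$ supplies the required $(p_0,q_0)$ off-diagonal bound for $\{e^{-tL}\}_{t>0}$ with Gaussian decay.

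For \cref{assum:S}, set $\mathcal{Q}_t \coloneqq c^{-1}(tL)^{1/2} e^{-tL}$. Part (a) is immediate: $g_L$ is sublinear by definition and $L^2$-bounded by the bounded $H^\infty$-functional calculus of $L$. Part (b) follows by factoring $\mathcal{Q}_t = (tL)^{1/2} e^{-tL/2} \cdot e^{-tL/2}$ and composing the off-diagonal bounds for each factor supplied by \cref{prop:OffDiagonalIntervals} and the composition argument of \cref{rmk:HigherOrder}. Part (c) is the cleanest step: the functional calculus gives the exact identity
\begin{equation*}
  \mathcal{Q}_s (tL)^N e^{-tL} = \frac{s^{1/2} t^N}{(s+t)^{N+1/2}}\, \Theta^{(N)}_{s+t}, \qquad \Theta^{(N)}_r \coloneqq (rL)^{N+1/2} e^{-rL},
\end{equation*}
so $A_0 = 1/2$, and the required off-diagonal estimates at all scales for $\Theta^{(N)}_r$ come again from \cref{prop:OffDiagonalIntervals}. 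Part (d), the Cotlar-type inequality for $g_L e^{-r^2 L}$, is established in the standard way by splitting the defining $t$-integral of $g_L$ at $t = r^2$, using off-diagonal decay of $\mathcal{Q}_t e^{-r^2 L}$ against $r$-scale balls to dominate the resulting pieces by $\mathcal{M}_{p_1}(g_L f)$ and $\mathcal{M}_{p_1}(f)$ for a suitable $p_1 \in [p_0, 2)$; an argument of precisely this form is already carried out in \cite{bernicot2016sharp}, and this is by far the most delicate point of the verification. With both assumptions in place, \cref{cor:Weighted} produces the stated estimate.
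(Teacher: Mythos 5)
Your overall strategy is exactly the paper's: verify Assumptions \ref{assum:L} and \ref{assum:S} for $L$ and $g_{L}$ and then invoke \cref{cor:Weighted}. The verification of \cref{assum:L}, part (a), and the algebraic identity in part (c) with $A_{0} = 1/2$ and $\Theta_{r}^{(N)} = (rL)^{N+\frac12}e^{-rL}$ all match the paper.

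The gap is in how you source the off-diagonal estimates for the half-integer powers of $L$. \cref{prop:OffDiagonalIntervals} covers the families $(zL)^{m}e^{-zL}$ only for \emph{integer} $m \in \N$, so it does not ``supply'' bounds for the factor $(tL)^{1/2}e^{-tL/2}$ in your factorisation of $\mathcal{Q}_{t}$, nor for $\Theta^{(N)}_{r} = (rL)^{N+\frac12}e^{-rL}$ in part (c); as written, the composition argument does not close. You need an extra input for the fractional power: either the $(2,2)$ off-diagonal estimates for $(tL)^{1/2}e^{-tL}$ (which would let you run a three-factor composition as in \cref{rmk:HigherOrder} --- this is how the paper treats the Laplace--Beltrami case), or, as the paper does here, the subordination formula
\begin{equation*}
  (tL)^{\frac12}e^{-tL}f = \frac{1}{\sqrt{\pi}}\,\sqrt{t}\int_{0}^{\infty} L e^{-(s+t)L}f\,\frac{\D{s}}{\sqrt{s}},
\end{equation*}
which together with Minkowski's inequality reduces part (b) to the integer case, i.e.\ to $p_{0},\,q_{0} \in \mathcal{J}^{0}(L) \subset \mathcal{J}^{1}(L)$. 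A smaller point: the Cotlar-type inequality in part (d) is not proved in \cite{bernicot2016sharp} (there it is a hypothesis); the paper instead cites the proof of \cite[Thm.~7.2~(a)]{AuscherMartellIII}, which gives precisely
$\br{\dashint_{B(x,r)}\abs{g_{L}e^{-r^{2}L}f}^{q_{0}}\D{y}}^{1/q_{0}} \lesssim \inf_{y\in B(x,r)}\mathcal{M}_{p_{0}}(g_{L}f)(y)$, so that one may take $p_{1} = p_{0}$ and no term $\mathcal{M}_{p_{1}}(f)$ is needed.
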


\begin{proof}  
 To prove the theorem, it is sufficient to check that the hypotheses
 of  Corollary \ref{cor:Weighted}, namely Assumptions \ref{assum:L}
 and \ref{assum:S}, are valid for the operators $L$ and
 $g_{L}$ and the indices $p_{0}, \, q_{0}$. Assumption \ref{assum:L}
 is clearly valid since the definition of $\mathcal{J}^{0}(L)$ implies
 that the semigroup $e^{-t L}$
 will satisfy $(p_{0},q_{0})$ full off-diagonal
 estimates.

 It remains to prove the validity of Assumption \ref{assum:S}.
Part (a), the $L^{2}$-boundedness of $g_{L}$, follows from
 the fact that $L$ possesses a bounded holomorphic functional
 calculus on $L^{2}$. Assumption \ref{assum:S}(b), the off-diagonal
 estimates of the operator family $ (t L)^{\frac{1}{2}} e^{-t L}$, can
 be proved if we combine the fact that $p_{0}, \, q_{0} \in
 \mathcal{J}^{0}(L) \subset \mathcal{J}^{1}(L)$ together with the relation
 
 \begin{equation*}
   (t L)^{\frac{1}{2}} e^{-t L} f = \frac{1}{\sqrt{\pi}} \sqrt{t} \int^{\infty}_{0} L 
   e^{-(s + t)L} f \, \frac{\D{s}}{\sqrt{s}}
 \end{equation*}
and Minkowski's inequality. Assumption \ref{assum:S}(c) follows on
observing that
\begin{align*}\begin{split}  
 \mathcal{Q}_{s} (t L)^{N} e^{-t L} &= (s L)^{\frac{1}{2}} e^{-s L} (t
 L)^{N} e^{-t L} \\
 &= \frac{s^{\frac{1}{2}}t^{N}}{(s + t)^{N + \frac{1}{2}}} ((s + t) L)^{N + \frac{1}{2}} e^{- (s + t)L}
\end{split}\end{align*}
and that since $p_{0}, \, q_{0} \in \mathcal{J}^{1}(L)$ the operator family
$\Theta_{r}^{(N)} = (r L)^{N + \frac{1}{2}} e^{-r L}$ will possess
$(p_{0},q_{0})$ full off-diagonal bounds for any $N \geq N_{0} = 0$ by
the argument of Remark \ref{rmk:HigherOrder}. Finally, for Assumption
\ref{assum:S}(d), in the proof of \cite[Thm.~7.2~(a)]{AuscherMartellIII} %
it was proved that
for any ball $B(x,r)$ we have
\begin{equation}
  \br{\dashint_{B(x,r)} \abs{g_{L} e^{-r^{2}L} f}^{q_{0}} \D{y}}^{\frac{1}{q_{0}}} \lesssim \sum_{j \geq 1} c(j)
  \br{\dashint_{2^{j + 1} B(x,r)} \abs{g_{L}f}^{p_{0}} \D{y}}^{\frac{1}{p_{0}}},\label{eq:AM3_for_g}
\end{equation}
for some sequence of numbers $c(j) > 0$ that satisfies $\sum_{j
  \geq 1} c(j) \lesssim 1$. This clearly implies that
$$
\br{\dashint_{B(x,r)} \abs{g_{L} e^{-r^{2} L}f}^{q_{0}} \D{y}}^{\frac{1}{q_{0}}} \lesssim \inf_{y \in B(x,r)} \mathcal{M}_{p_{0}}(g_{L}f)(y),
$$
and thus Assumption \ref{assum:S}(d) is valid.
\end{proof}

Similarly, Corollary \ref{cor:Weighted} can be applied to the
square function $G_{L}$.

\begin{prop} 
 \label{prop:Elliptic2} 
 Let $p_{0}, \, q_{0} \in \mathcal{K}^{0}(L)$ with $p_{0} < 2 < q_{0}$. Then, for any $p \in
 (p_{0}, q_{0})$ and $w \in A_{\frac{p}{p_{0}}} \cap RH_{(\frac{q_{0}}{p})'}$,
 $$
\norm{G_{L}}_{L^{p}(w)} \lesssim \br{\brs{w}_{A_{\frac{p}{p_{0}}}}
  \cdot \brs{w}_{RH_{(\frac{q_{0}}{p})'}}}^{\gamma(p)}.
$$
\end{prop}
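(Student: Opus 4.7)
The plan is to mirror the argument used for $g_L$ in \cref{prop:Elliptic}, applying \cref{cor:Weighted} to the pair $(L, G_L)$ with exponents $p_0, q_0 \in \mathcal{K}^0(L)$ after verifying Assumptions \ref{assum:L} and \ref{assum:S}. Assumption \ref{assum:L} is immediate: by \cref{prop:OffDiagonalIntervals} we have $\mathcal{K}^0(L) \subseteq \mathcal{J}^0(L)$, so the semigroup $\{e^{-tL}\}_{t>0}$ satisfies $(p_0, q_0)$ full off-diagonal estimates at these exponents.

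For \cref{assum:S} with $\mathcal{Q}_t := \sqrt{t}\,\nabla e^{-tL}$, the $L^2$-boundedness in (a) follows from the accretivity of $L$: integrating the identity $\tfrac{d}{dt}\|e^{-tL}f\|_2^2 = -2\,\mathrm{Re}\langle Le^{-tL}f, e^{-tL}f\rangle$ together with the ellipticity bound $\mathrm{Re}\langle Lu, u\rangle \geq \lambda\|\nabla u\|_2^2$ yields $\|G_L f\|_2^2 \leq (2\lambda)^{-1}\|f\|_2^2$. Part (b) is exactly the content of $p_0, q_0 \in \mathcal{K}^0(L)$. Part (c) follows from the algebraic identity
\begin{equation*}
\mathcal{Q}_s (tL)^N e^{-tL} = \sqrt{s}\,\nabla (tL)^N e^{-(s+t)L} = \frac{s^{1/2} t^N}{(s+t)^{N + 1/2}}\, \Theta^{(N)}_{s+t},
\end{equation*}
where $\Theta^{(N)}_r := \sqrt{r}\,\nabla (rL)^N e^{-rL}$, so that $A_0 = 1/2$ and $N_0 = 0$. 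To check that $\Theta^{(N)}_r$ satisfies full off-diagonal estimates at scale $\sqrt{r}$ for every $N \geq 0$, I would factor $(rL)^N e^{-rL}$ into a product of one gradient semigroup factor and $N$ further factors of the form $(rL/(N+1))e^{-rL/(N+1)}$ times $e^{-rL/(N+1)}$, and then invoke the stability of full off-diagonal estimates under composition exactly as in \cref{rmk:HigherOrder}, using that each factor lies either in $\mathcal{K}^0(L)$ or $\mathcal{J}^0(L)$.

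The main obstacle, and the place where I would need to lean most heavily on the existing literature, is the Cotlar-type inequality in Assumption \ref{assum:S}(d). My plan here is to invoke the gradient analogue of \eqref{eq:AM3_for_g}, proved in \cite{AuscherMartellIII} for $p_0, q_0 \in \mathcal{K}^0(L)$ with $p_0 < 2 < q_0$, namely
\begin{equation*}
\br{\dashint_{B(x,r)} \abs{G_L e^{-r^2 L} f}^{q_0} \D{y}}^{1/q_0} \lesssim \sum_{j \geq 1} c(j) \br{\dashint_{2^{j+1}B(x,r)} \abs{G_L f}^{p_0} \D{y}}^{1/p_0},
\end{equation*}
with $\sum_j c(j) \lesssim 1$. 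This immediately gives the pointwise bound $\br{\dashint_{B(x,r)} \abs{G_L e^{-r^2 L} f}^{q_0} \D{y}}^{1/q_0} \lesssim \inf_{y \in B(x,r)} \mathcal{M}_{p_0}(G_L f)(y)$, which is Assumption \ref{assum:S}(d) with $p_1 = p_0$. With all four parts of Assumption \ref{assum:S} established, \cref{cor:Weighted} yields the stated weighted bound for $G_L$.
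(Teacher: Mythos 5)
Your proposal is correct and follows essentially the same route as the paper: verify Assumptions \ref{assum:L} and \ref{assum:S} for $\mathcal{Q}_t = \sqrt{t}\,\nabla e^{-tL}$ using $\mathcal{K}^0(L)\subset\mathcal{J}^0(L)$, the same algebraic identity with $A_0=1/2$ and a factorisation of $\Theta^{(N)}_r$ handled by stability of full off-diagonal estimates under composition (the paper splits into two factors $\sqrt{r}\nabla e^{-rL/2}\cdot (rL)^N e^{-rL/2}$ rather than your $N+1$, which is immaterial), and cite \cite[Thm.~7.2~(b)]{AuscherMartellIII} for the Cotlar inequality before applying \cref{cor:Weighted}. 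The only cosmetic difference is that you derive the $L^2$-bound in (a) explicitly by integration by parts where the paper cites \cite{auscher2007necessary}.
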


\begin{proof}  
In order to apply Corollary \ref{cor:Weighted}, it is sufficient to
show that $G_{L}$ satisfies Assumptions \ref{assum:S} and
\ref{assum:L}. Assumption \ref{assum:L} is implied by $p_{0}, \, q_{0}
\in \mathcal{K}^{0}(L) \subset \mathcal{J}^{0}(L)$. 

Let us now demonstrate the validity of Assumption \ref{assum:S}. The
$L^{2}$-boundedness of $G_{L}$, Assumption \ref{assum:S}(a), follows from the
ellipticity condition of $A$ and a straightforward integration by
parts argument that can be found in
{\cite[pg.~74]{auscher2007necessary}}. Assumption \ref{assum:S}(b) is
implied by the condition $p_{0}, \, q_{0} \in \mathcal{K}^{0}(L)$. For
Assumption \ref{assum:S}(c), notice that
\begin{align*}\begin{split}  
 \mathcal{Q}_{s} Q_{t}^{(N)} &= \sqrt{s} \nabla e^{-s L} (t L)^{N}
 e^{-t L} \\
 &= \frac{s^{\frac{1}{2}} t^{N}}{(s + t)^{N + \frac{1}{2}}} \sqrt{s +
   t} \nabla \br{(s + t) L}^{N} e^{-(s + t)L} \\
&=:  \frac{s^{\frac{1}{2}} t^{N}}{(s + t)^{N + \frac{1}{2}}} \Theta^{(N)}_{s
+ t}. 
 \end{split}\end{align*}
Also observe that
$$
\Theta^{(N)}_{r} = \sqrt{r} \nabla e^{- r L / 2} (r L)^{N} e^{-r L/2}.
$$
As $p_{0}, \, q_{0} \in \mathcal{K}^{0}(L)$, it follows from
H\"{o}lder's inequality that the
operator family $\sqrt{r} \nabla e^{-r L/2}$ will satisfy $(2,q_{0})$ full off-diagonal estimates. Similarly,
since $\mathcal{K}^{0}(L)  \subset
\mathcal{J}^{N}(L)$ for any $N \geq N_{0} = 0$, the family $(r L)^{N} e^{-r L/2}$ satisfies $(p_{0},2)$ full off-diagonal bounds. It then follows from
the stability of full off-diagonal bounds under composition that the
family $\Theta_{r}^{(N)}$ will satisfy $(p_{0},q_{0})$ full
off-diagonal bounds. This proves that
Assumption \ref{assum:S}(c) is satisfied.

 Finally, for Assumption
 \ref{assum:S}(d), in the proof of \cite[Thm.~7.2~(b)]{AuscherMartellIII} %
 it was proved that for any ball $B(x,r)$ we have
 \begin{equation*}
   \br{\dashint_{B(x,r)} \abs{G_{L} e^{-r^{2}L} f}^{q_{0}} \D{y}}^{\frac{1}{q_{0}}} \lesssim \sum_{j \geq 1} d(j)
   \br{\dashint_{2^{j + 1} B(x,r)} \abs{G_{L}f}^{p_{0}} \D{y}}^{\frac{1}{p_{0}}},
 \end{equation*}
for some sequence of numbers $d(j) > 0$ that satisfies $\sum_{j
  \geq 1} d(j) \lesssim 1$. This clearly implies that
\begin{equation}
  \label{eq:AM3_for_G}
  \br{\dashint_{B(x,r)} \abs{G_{L} e^{-r^{2} L}f}^{q_{0}} \D{y}}^{\frac{1}{q_{0}}} \lesssim \inf_{y \in B(x,r)} \mathcal{M}_{p_{0}}(G_{L}f)(y),
\end{equation}
and thus Assumption \ref{assum:S}(d) is valid.
 \end{proof}

 \begin{rmk}\label{rmk:example_Laplacian}
   For $A = I$ we have $L = -\Delta$ and it is then known that
   $\mathcal{J}^{0}(L) = \mathcal{K}^{0}(L) = [1,\infty]$. We can then take $p_{0} = 1$ and $q_{0} = \infty$ in
   Propositions \ref{prop:Elliptic} and \ref{prop:Elliptic2}. This will produce
   the weighted estimates
   \begin{align*}\begin{split}  
       \norm{g_{-\Delta}}_{L^{p}(w)}, \ \norm{G_{-\Delta}}_{L^{p}(w)}
       &\lesssim \br{\brs{w}_{A_{p}} \brs{w}_{RH_{1}}}^{\max \br{\frac{1}{p
             - 1},\frac{1}{2}}} \\
       &= \brs{w}^{\max \br{\frac{1}{p - 1}, \frac{1}{2}}}_{A_{p}}
     \end{split}\end{align*}
   for all $w \in A_{p} \cap RH_{1} = A_{p}$. For both square functions, it is known that these
   estimates are optimal in the sense that they will not hold
   for an exponent of $\brs{w}_{A_{p}}$ any smaller than the above exponent.  This provides a new proof
   of weighted boundedness of the standard square functions
   associated with $-\Delta$ with optimal dependence on the constant
   $\brs{w}_{A_{p}}$. For the original proof using local mean oscillation
   techniques, the reader is referred to \cite{MR2770437}.
 \end{rmk}

   \begin{rmk}
If $A$ is real-valued then it is known that $\mathcal{J}^{0}(L) =
[1,\infty]$ (c.f. \cite{AuscherMartellIII}). Proposition \ref{prop:Elliptic} will then imply that
$$
 \norm{g_{L}}_{L^{p}(w)} \lesssim \brs{w}^{\max \br{\frac{1}{p - 1}, \frac{1}{2}}}_{A_{p}}
 $$
 for all $w \in A_{p}$. This result was first proved by Bui and Duong
 in \cite{MR4058541}.
     \end{rmk}

     \subsection{Laplace--Beltrami}

Let $M$ be a complete, connected, non-compact Riemannian manifold. It
will be assumed that the Riemannian measure $\mu$ satisfies the volume
doubling property. In addition, it will also be assumed that there exists a function $\psi :(0,\infty)
\rightarrow (0,\infty)$ for which
\begin{equation*}
\abs{B(x,r)} =  \mu(B(x,r)) \simeq \psi(r)
\end{equation*}
uniformly for all $x \in M$ and $r > 0$. That is, the manifold is of
$\psi$-growth. Enforcing this stronger growth condition will allow us
to interchange our different notions of off-diagonal estimates
(c.f. Remark \ref{rmk:EquivalentOD}).
Consider the Laplace--Beltrami operator $\Delta$
defined as an unbounded operator on $L^{2}(M,\mu)$ through the
integration by parts formula
$$
\langle \Delta f, f \rangle = \norm{\abs{\nabla f}}^{2}_{2}
$$
for $f \in C^{\infty}_{0}(M)$, where $\nabla$ is the Riemannian
gradient. The positivity of $\Delta$ implies that it will generate an
analytic semigroup $e^{-t \Delta}$ on
$L^{2}(M,\mu)$.

Recall that the heat kernel $k_{t}(x,y)$ of $\Delta$
is said to satisfy Gaussian upper bounds if there exists $c > 0$ such that
$$
k_{t}(x,y) \lesssim \frac{1}{\abs{B(x,\sqrt{t})}} e^{-c \frac{d^{2}(x,y)}{t}}
$$
for all $x, \, y  \in M$ and $t > 0$. This is a very common
assumption that is imposed when considering the boundedness of singular
operators on Riemannian manifolds. For further information refer to
\cite{coulhon1999riesz}, \cite{auscher2004riesz} or \cite{AuscherMartellIV}. Consider the square function
$g_{\Delta}$ defined through,
\begin{equation*}
  g_{\Delta} f \coloneqq \left(\int^{\infty}_{0} \abs{(t \Delta)^{\frac{1}{2}}e^{-t\Delta}f}^2 \frac{\D{t}}{t}\right)^{1/2}. 
\end{equation*}
The boundedness for square functions of this form on unweighted
$L^{p}(M)$ with $1 < p \leq 2$ was previously proved in
\cite{LPSmanifold}. Let us consider the weighted case on the full
range of $p \in (1,\infty)$.

\begin{prop} 
 \label{prop:LBg} 
 Suppose that the heat kernel for $M$ satisfies Gaussian upper
 bounds. Then, for any $p \in (1,\infty)$ and $w \in A_{p}$,
 $$
\norm{g_{\Delta}}_{L^{p}(w)} \lesssim \brs{w}_{A_{p}}^{\max
  \br{\frac{1}{2}, \frac{1}{p - 1}}}.
 $$
\end{prop}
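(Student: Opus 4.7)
The strategy is to invoke \cref{cor:Weighted} with the endpoint exponents $p_{0} = 1$ and $q_{0} = \infty$. Under this choice the weight class $A_{p/p_{0}} \cap RH_{(q_{0}/p)'}$ collapses to $A_{p} \cap RH_{1} = A_{p}$, and the exponent $\gamma(p)$ reduces to $\max\bigl(\frac{1}{p-1}, \frac{1}{2}\bigr)$, which matches the exponent claimed in the statement. Thus the entire task reduces to verifying that $L = \Delta$ and $S = g_{\Delta}$ satisfy \cref{assum:L} and \cref{assum:S} with $(p_{0}, q_{0}) = (1,\infty)$.

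Verification of \cref{assum:L} is essentially automatic. The Laplace--Beltrami operator $\Delta$ is densely defined, non-negative and self-adjoint, hence $0$-accretive, and injective on $L^{2}(M,\mu)$ because $M$ is non-compact with doubling measure, so constants are not $L^{2}$. The assumed Gaussian upper bound on the heat kernel $k_{t}$, after integration over balls of radius $\sqrt{t}$, is exactly the required $(1,\infty)$ off-diagonal estimate at scale $\sqrt{t}$. By the standard analyticity argument, the same type of Gaussian bound propagates to the time-derivatives $(t\Delta)^{N} e^{-t\Delta}$ for every $N \geq 0$.

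For \cref{assum:S}, parts (a)--(c) follow almost verbatim from the argument used in the proof of \cref{prop:Elliptic}. Part (a), the $L^{2}$-boundedness of $g_{\Delta}$, is immediate from the bounded holomorphic functional calculus enjoyed by any non-negative self-adjoint operator. Part (b), the off-diagonal bound for $\mathcal{Q}_{t} = (t\Delta)^{1/2} e^{-t\Delta}$, follows from the subordination identity
\begin{equation*}
  (t\Delta)^{1/2} e^{-t\Delta}
  = \frac{\sqrt{t}}{\sqrt{\pi}} \int_{0}^{\infty} \Delta \, e^{-(s+t)\Delta} \, \frac{\D{s}}{\sqrt{s}}
\end{equation*}
combined with Minkowski's inequality and the Gaussian off-diagonal bound for $r \Delta\, e^{-r\Delta}$. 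Part (c), the cancellation, reduces via the composition identity
\begin{equation*}
  \mathcal{Q}_{s}\,(t\Delta)^{N} e^{-t\Delta}
  = \frac{s^{1/2} t^{N}}{(s+t)^{N+1/2}}\,\Theta_{s+t}^{(N)}, \qquad
  \Theta_{r}^{(N)} := (r\Delta)^{N+1/2} e^{-r\Delta},
\end{equation*}
to $(1,\infty)$ off-diagonal estimates at all scales for $\Theta_{r}^{(N)}$, which again follow from the Gaussian bounds on the heat kernel and its time derivatives.

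The main obstacle is part (d), the Cotlar-type inequality. Since $q_{0} = \infty$, the left-hand side is an essential supremum over $B(x,r)$. Here I would adapt the argument behind \eqref{eq:AM3_for_g} from \cite[Thm.~7.2~(a)]{AuscherMartellIII}: split the time integral defining $g_{\Delta} e^{-r^{2}\Delta} f$ at $t = r^{2}$, decompose $f$ dyadically into annular pieces around $B = B(x,r)$, and estimate each piece with the Gaussian off-diagonal bounds established above. The small-time contribution is absorbed into $\mathcal{M}_{1}(g_{\Delta} f)$ via the semigroup property together with a local reabsorption, while the large-time contribution is dominated by $\mathcal{M}_{1}(f)$ by direct Gaussian kernel estimates. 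The $\psi$-growth hypothesis enters through uniform annular volume comparisons, which guarantee convergence of the resulting geometric series over the annuli and produce the infimum form of the estimate required by \cref{assum:S}(d). Once (d) is in place, \cref{cor:Weighted} delivers the claimed weighted bound.
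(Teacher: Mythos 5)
Your proposal is correct and follows essentially the same route as the paper: reduce to \cref{cor:Weighted} with $(p_{0},q_{0})=(1,\infty)$ and verify Assumptions \ref{assum:L} and \ref{assum:S} from the Gaussian heat kernel bounds, with part (d) handled by transplanting the Auscher--Martell argument behind \eqref{eq:AM3_for_g}. The only cosmetic difference is that for Assumption \ref{assum:S}(b) you use the subordination formula (as the paper does for $g_{L}$ in \cref{prop:Elliptic}), whereas the paper's proof of this proposition instead factors $(t\Delta)^{1/2}e^{-t\Delta}$ as $e^{-t\Delta/3}\cdot(t\Delta)^{1/2}e^{-t\Delta/3}\cdot e^{-t\Delta/3}$ and uses stability of full off-diagonal estimates under composition; both are valid.
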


\begin{proof}  
 This result will follow from \cref{cor:Weighted} provided
 that Assumptions
 \ref{assum:L} and \ref{assum:S} are verified to hold with $p_{0} = 1$ and
 $q_{0} = \infty$.

 For Assumption \ref{assum:L}, it is known that the
 heat kernel satisfying Gaussian upper bounds
 is equivalent to the semigroup
 $e^{-t\Delta}$ satisfying $(1,\infty)$ full off-diagonal
 estimates. For proof, the reader is referred to
 {\cite[Prop.~2.2]{AuscherMartellII}} and
 {\cite[Prop.~3.3]{AuscherMartellII}}. Thus Assumption \ref{assum:L}
 will be valid.

 For Assumption \ref{assum:S}(a), the $L^{2}$-boundedness of $g_{\Delta}$
 follows from the bounded holomorphic
 functional calculus of $\Delta$ on $L^{2}$. For Assumption \ref{assum:S}(b),
 notice that
 $$
(t \Delta)^{\frac{1}{2}} e^{-t \Delta} = e^{-\frac{t}{3} \Delta} \cdot (t
\Delta)^{\frac{1}{2}} e^{-\frac{t}{3} \Delta} \cdot e^{-\frac{t}{3} \Delta}. 
$$
Observe that since the semigroup $e^{-t \Delta}$ satisfies
$(1,\infty)$ full off-diagonal estimates, $e^{-\frac{t}{3} \Delta}$ will
satisfy both $(1,2)$ and
$(2,\infty)$ full off-diagonal bounds by H\"{o}lder's inequality. At
the same time, $(t\Delta)^{\frac{1}{2}}e^{-t\Delta}$
is well-known to satisfy $(2,2)$ full off-diagonal
bounds (c.f. {\cite[pg.~930]{auscher2004riesz}} and the references
therein). It then follows from the stability of full off-diagonal
bounds under composition ({\cite[Thm.~2.3~(b)]{AuscherMartellII}}) %
that $(t \Delta)^{\frac{1}{2}} e^{-t\Delta}$ satisfies $(1,\infty)$ full off-diagonal bounds.
This proves that Assumption \ref{assum:S}(b) is satisfied.

Assumption \ref{assum:S}(c) follows from the expression
$$
\mathcal{Q}_{s} (t\Delta)^{N} e^{-t \Delta} = \frac{s^{\frac{1}{2}}
  t^{N}}{(s + t)^{N + \frac{1}{2}}} \brs{(s + t) \Delta}^{N +
  \frac{1}{2}} e^{-(s + t)\Delta}
$$
and the fact that
the operator family $\{(r \Delta)^{N +  \frac{1}{2}}e^{- r \Delta}\}_{r>0}$
satisfies $(1,\infty)$ full off-diagonal
bounds by an argument similar to that of Remark
\ref{rmk:HigherOrder}.

Finally, the validity of Assumption
\ref{assum:S}(d)  can be proved in an identical manner to the argument
used to obtain \eqref{eq:AM3_for_g}.
This argument can be found in {\cite[\S
  7]{AuscherMartellIII}} on pages 729--730. This argument in the elliptic
setting follows from a combination of the off-diagonal estimates of
the constituent operators, the fact that the constituent operators are
expressible in terms of the semigroup and a variation of the
Marcinkiewicz--Zygmund theorem \cite{GrafakosClassical}*{Thm.~5.5.1}. All three of these components will
hold for our square function in this Riemannian manifold setting and thus the argument will be valid.
 \end{proof}

 Next, we will apply our sparse result to the square function
 \begin{equation*}
   G_{\Delta}f \coloneqq \left(\int^{\infty}_{0} \abs{\sqrt{t} \nabla e^{-t\Delta}}^{2} \frac{\D{t}}{t}\right)^{1/2}.
 \end{equation*}
Define
$$
q_{+} \coloneqq \sup \lb p \in (1,\infty) : \norm{ \abs{ \nabla
  \Delta^{-\frac{1}{2}} f }}_{p} \lesssim \norm{f}_{p}  \rb.
$$
The weighted boundedness of the Riesz transforms operator $\nabla
\Delta^{-\frac{1}{2}}$ on $L^{p}(M,w\D{\mu})$ was considered for $p \in
(1,q_{+})$ in \cite{AuscherMartellIV}. Owing to the strong connection
between the Riesz transforms and the square function $G_{\Delta}$,
the range $(1,q_{+})$ will also be a natural
interval over which to consider the boundedness of $G_{\Delta}$.
It was proved in \cite{coulhon1999riesz} that $q_{+} \geq 2$. In the below proposition we
assume this inequality to be strict.

\begin{prop} 
  \label{prop:LBG}
  Assume that the heat kernel of $M$ satisfies Gaussian upper bounds and that $q_{+} >
  2$. Let $2 < q_{0} < q_{+}$ and $p \in [1,q_{0})$. Then for any $w \in A_{p} \cap RH_{(\frac{q_{0}}{p})'}$,
 $$
\norm{G_{\Delta}}_{L^{p}(w)} \lesssim
\br{\brs{w}_{A_{p}} \cdot \brs{w}_{RH_{(\frac{q_{0}}{p})'}}}^{\gamma(p)}.
 $$
\end{prop}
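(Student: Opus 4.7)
The plan is to apply \cref{cor:Weighted} with $p_{0} = 1$ and the given $q_{0} \in (2,q_{+})$. This requires verifying that the pair $(\Delta, G_{\Delta})$ satisfies \cref{assum:L} and \cref{assum:S} for these exponents. Because the manifold is of $\psi$-growth, \cref{rmk:EquivalentOD} allows us to move freely between the three notions of off-diagonal estimates. \cref{assum:L} is already established in \cref{prop:LBg}: the Gaussian upper bounds on the heat kernel are equivalent to full $(1,\infty)$ off-diagonal bounds for $\{e^{-t\Delta}\}_{t>0}$, hence in particular to $(1,q_{0})$ bounds. The $L^{2}$-boundedness required in \cref{assum:S}(a) is the standard integration-by-parts estimate $\|G_{\Delta}f\|_{2}^{2} \simeq \langle \Delta f,f\rangle = \|\nabla f\|_{2}^{2}$, identical to the argument used in \cref{prop:Elliptic2}.

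The key new input, and the main obstacle in the proof, is \cref{assum:S}(b): namely, that $\{\sqrt{t}\nabla e^{-t\Delta}\}_{t>0}$ satisfies $(1,q_{0})$ off-diagonal estimates at scale $\sqrt{t}$. Since $q_{0} < q_{+}$, the Riesz transform $\nabla \Delta^{-1/2}$ is bounded on $L^{q_{0}}$, and it is known (see for instance \cite{auscher2004riesz}) that under Gaussian bounds this is equivalent to the full $(q_{0},q_{0})$ off-diagonal property for the family $\sqrt{t}\nabla e^{-t\Delta}$. We then factor
\begin{equation*}
  \sqrt{t}\nabla e^{-t\Delta} = \bigl(\sqrt{t}\nabla e^{-t\Delta/2}\bigr) \cdot e^{-t\Delta/2},
\end{equation*}
where the right factor is $(1,q_{0})$ off-diagonal by the Gaussian bounds and Hölder's inequality, and the left factor is $(q_{0},q_{0})$ off-diagonal by the previous argument. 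Stability of full off-diagonal estimates under composition then yields the desired $(1,q_{0})$ bounds.

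For \cref{assum:S}(c), we mimic the elliptic computation in \cref{prop:Elliptic2}:
\begin{equation*}
  \mathcal{Q}_{s}(t\Delta)^{N}e^{-t\Delta}
  = \frac{s^{1/2}t^{N}}{(s+t)^{N+1/2}}\,\Theta^{(N)}_{s+t},
  \qquad
  \Theta^{(N)}_{r} = \sqrt{r}\nabla e^{-r\Delta/2}\cdot (r\Delta)^{N} e^{-r\Delta/2}.
\end{equation*}
Here $(r\Delta)^{N}e^{-r\Delta/2}$ satisfies $(1,2)$ full off-diagonal bounds at all scales (by the argument of \cref{rmk:HigherOrder} applied to the Gaussian-bounded semigroup, noting that $\mathcal{J}^{0}(\Delta)=[1,\infty]$), while $\sqrt{r}\nabla e^{-r\Delta/2}$ satisfies $(2,q_{0})$ full off-diagonal bounds by the argument above. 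Composition yields $(1,q_{0})$ off-diagonal bounds for $\Theta^{(N)}_{r}$ at all scales, as required. The constants $A_{0}=1/2$ and $N_{0}=0$ play the same role as in the elliptic proof.

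Finally, for the Cotlar-type inequality \cref{assum:S}(d), we transpose to our setting the argument used by Auscher and Martell in the proof of \cite[Thm.~7.2(b)]{AuscherMartellIII} to obtain \eqref{eq:AM3_for_G}. That argument relies only on the off-diagonal estimates for the constituent family $\sqrt{t}\nabla e^{-t\Delta}$, the expressibility of $e^{-t\Delta}$ via the semigroup, and a vector-valued Marcinkiewicz--Zygmund argument \cite{GrafakosClassical}*{Thm.~5.5.1}, each of which is available in our Riemannian setting under Gaussian bounds and the assumption $q_{0} < q_{+}$. This gives
\begin{equation*}
  \Bigl(\dashint_{B(x,r)}\abs{G_{\Delta}e^{-r^{2}\Delta}f}^{q_{0}}\D{\mu}\Bigr)^{1/q_{0}}
  \lesssim \inf_{y\in B(x,r)}\mathcal{M}_{p_{0}}(G_{\Delta}f)(y),
\end{equation*}
so \cref{assum:S}(d) holds. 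Applying \cref{cor:Weighted} then produces the claimed weighted bound, with the exponent $\gamma(p)=\max\bigl(1/(p-1),(q_{0}/p)'/(2q_{0}^{*})\bigr)$ matching the general formula.
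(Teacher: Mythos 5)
Your proposal is correct and follows the same overall strategy as the paper: verify Assumptions \ref{assum:L} and \ref{assum:S} for $(\Delta, G_{\Delta})$ with $p_{0}=1$ and apply \cref{cor:Weighted}; the treatments of Assumptions \ref{assum:S}(c) and (d) are essentially identical to the paper's. The one step where you genuinely diverge is Assumption \ref{assum:S}(b). The paper proves the $(1,q_{0})$ off-diagonal estimates for $\sqrt{t}\nabla e^{-t\Delta}$ directly, via the Gaussian-weighted $L^{q_{0}}$ gradient bound for the heat kernel from the proof of \cite[Prop.~1.10]{auscher2004riesz} followed by Minkowski's inequality; you instead invoke the equivalence (also from \cite{auscher2004riesz}) between $L^{q_{0}}$-boundedness of $\nabla\Delta^{-1/2}$ for $q_{0}<q_{+}$ and uniform $(q_{0},q_{0})$ Gaffney estimates for $\{\sqrt{t}\nabla e^{-t\Delta}\}$, and then compose with the $(1,q_{0})$ bounds for $e^{-t\Delta/2}$. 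Both routes rest on the same source and the same hypothesis $q_{0}<q_{+}$; yours is more modular (it reuses the composition machinery already set up for the elliptic case), while the paper's is more self-contained and makes the Gaussian decay of $\rho$ explicit, which is what feeds into the quantitative form of Definition \ref{def:OffDiagonalAllScales}.

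One slip worth correcting: the displayed identity $\norm{G_{\Delta}f}_{2}^{2}\simeq\langle\Delta f,f\rangle=\norm{\abs{\nabla f}}_{2}^{2}$ is not what the integration-by-parts argument gives (and, taken literally, would not yield $L^{2}$-boundedness of $G_{\Delta}$). The correct computation is
\begin{equation*}
  \norm{G_{\Delta}f}_{2}^{2}=\int_{0}^{\infty}\norm{\abs{\nabla e^{-t\Delta}f}}_{2}^{2}\D{t}
  =\int_{0}^{\infty}\langle\Delta e^{-2t\Delta}f,f\rangle\D{t}\le\tfrac{1}{2}\norm{f}_{2}^{2},
\end{equation*}
which is the content of the argument you cite. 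The paper instead obtains Assumption \ref{assum:S}(a) by factoring through the $L^{2}$-boundedness of $\nabla\Delta^{-1/2}$ and of $g_{\Delta}$; either way the conclusion holds, so this does not affect the validity of your proof.
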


\begin{proof}  
 Once again, let us apply Corollary \ref{cor:Weighted}. Assumption
 \ref{assum:L} will be true for the same reason as in Proposition
 \ref{prop:LBg}. As $q_{+} > 2$, Assumption \ref{assum:S}(a) is implied by the
 boundedness of $\nabla \Delta^{-\frac{1}{2}}$ on $L^{2}$ and the
 boundedness of $g_{\Delta}$ on $L^{2}$.

 Let us show that the family of operators $\mathcal{Q}_{t} = \sqrt{t}
 \nabla e^{-t \Delta}$ satisfies $(1,q_{0})$ off-diagonal
 estimates at all scales with $\rho(x) = \exp(-cx^2)$, for some $c>0$.
 Since we have imposed the uniform $\psi$-growth condition
 upon our manifold, this is equivalent to proving $(1,q_{0})$
 off-diagonal estimates at scale $\sqrt{t}$. Fix balls $B_{1}, \, B_{2} \subset M$ of radius $\sqrt{t}$.
 From the argument in the proof of {\cite[Prop~1.10]{auscher2004riesz}}, %
 $$
\br{\int_{M} \abs{\nabla_{x} k_{t}(x,y)}^{q_{0}} e^{c
    \frac{d^{2}(x,y)}{t}} \D{\mu}(x)}^{\frac{1}{q_{0}}} \lesssim
\frac{1}{\sqrt{t} \abs{B(y,\sqrt{t})}^{1 - \frac{1}{q_{0}}}}
$$
for all $t > 0$ and $y \in M$, where $c > 0$ is dependent on
$q_{0}$. This immediately implies that
\begin{align*}\begin{split}  
 \br{\dashint_{B_{2}} \abs{\nabla_{x} k_{t}(x,y)}^{q_{0}} \D{\mu}(x)}^{\frac{1}{q_{0}}} &\lesssim \frac{1}{\sqrt{t}} e^{-c
  \frac{d^{2}(B_{1},B_{2})}{t}} \frac{1}{\abs{B(y,\sqrt{t})}^{1 -
    \frac{1}{q_{0}}} \abs{B_{2}}^{\frac{1}{q_{0}}}} \\
&\simeq \frac{1}{\sqrt{t}} e^{-c  \frac{d^{2}(B_{1},B_{2})}{t}} \frac{1}{\psi(\sqrt{t})},
\end{split}\end{align*}
where the last line follows from the uniform $\psi$-growth condition
imposed upon our manifold. For $f$ supported in $B_{1}$, Minkowski's
inequality followed by the previous estimate produces
\begin{align*}\begin{split}  
 \br{\dashint_{B_{2}} \abs{\sqrt{t} \nabla e^{-t \Delta}f(x)}^{q_{0}}
   \D{\mu}(x)}^{\frac{1}{q_{0}}} &= \br{\dashint_{B_{2}}
   \abs{\int_{B_{1}} \sqrt{t} \nabla_{x} k_{t}(x,y) f(y) \D{\mu}(y)}^{q_{0}} \D{\mu}(x)}^{\frac{1}{q_{0}}} \\
 &\leq \int_{B_{1}} \br{\dashint_{B_{2}} \abs{\sqrt{t} \nabla_{x}
     k_{t}(x,y)}^{q_{0}} \D{\mu}(x)}^{\frac{1}{q_{0}}} \abs{f(y)} \D{\mu}(y) \\
 &\lesssim \frac{1}{\psi(\sqrt{t})} e^{- c\frac{d^{2}(B_{1},B_{2})}{t}} \int_{B_{1}} \abs{f(y)} \D{\mu}(y)
 \\
 &\simeq e^{-c\frac{d^{2}(B_{1},B_{2})}{t}} \dashint_{B_{1}} \abs{f(y)} \D{\mu}(y). 
 \end{split}\end{align*}

Let's now prove that Assumption \ref{assum:S}(c) is valid. Observe that
$$
\mathcal{Q}_{s} (t \Delta)^{N} e^{- t \Delta} = \frac{s^{\frac{1}{2}}
  t^{N}}{(s + t)^{N + \frac{1}{2}}} \sqrt{s + t} \nabla e^{-\frac{s +
    t}{2} \Delta} \brs{(s + t) \Delta}^{N} e^{-\frac{s + t}{2} \Delta}
=: \frac{s^{\frac{1}{2}} t^{N}}{(s + t)^{N + \frac{1}{2}}} \Theta^{(N)}_{s +
t}.
$$
Recall from the proof of Proposition \ref{prop:LBg} that the operator
family $\{(r \Delta)^{N} e^{-r \Delta}\}_{r>0}$ satisfies $(1,\infty)$ full
off-diagonal estimates. By H\"{o}lder's inequality it will then also satisfy
$(1,2)$ full off-diagonal estimates. Similarly by H\"{o}lder, the
family $\{\sqrt{r}\nabla e^{-r \Delta}\}_{r>0}$ satisfies $(2,q_{0})$ full
off-diagonal estimates. The
stability of full off-diagonal estimates under composition then
implies that the
operator family $\Theta_{r}$ satisfies $(1,q_{0})$ full off-diagonal
estimates. This proves Assumption \ref{assum:S}(c).

Finally, the validity of Assumption \ref{assum:S} (d)
can be proved in an identical manner to the argument
used to obtain \eqref{eq:AM3_for_G}.
This argument can be found in {\cite[\S 7]{AuscherMartellIII}} on page
732. This argument in the elliptic
setting follows from a combination of the off-diagonal estimates of
the constituent operators, the fact that the constituent operators are
expressible in terms of the semigroup and a variation of the
Marcinkiewicz--Zygmund theorem \cite{GrafakosClassical}*{Thm.~5.5.1}. All three of these components will
hold for our square function in this Riemannian manifold setting and thus the argument will be valid.
 \end{proof}

\section{Boundedness of the Maximal Function}
\label{sec:Maximal}
Throughout this section, fix $p_{0}, \, q_{0} \in [1,\infty]$,
$N_{0} \in \N$ and operators $L$ and $S$ satisfying Assumptions \ref{assum:L} and \ref{assum:S} for such a choice of  $p_{0}, \, q_{0} $.
For a ball $B$ we denote by $r(B)$ its radius.
Define the following maximal operator associated with the square
function,
\begin{align*}
  S^{*}f(x) &\coloneqq \sup_{\substack{B \text{ ball} \\ B \ni x}}
  \br{\dashint_{B} \abs{S^{[r(B)^{2},\infty)}f}^{q_{0}} \D{\mu}}^{1/q_{0}} \\
           &:= \sup_{\substack{B \text{ ball} \\ B \ni x}} \br{\dashint_{B}
  \br{\int^{\infty}_{r(B)^{2}} \abs{\mathcal{Q}_{t} f}^{2}
  \frac{\D t}{t}}^{\frac{q_{0}}{2}} \D{\mu}}^{1 / q_{0}}.
\end{align*}
In this section, our aim is to prove the following boundedness result
for $S^{*}$. 

\begin{thm} 
 \label{thm:Maximal} 
  The maximal function $S^{*}$ is bounded on
 $L^{2}$ and weak-type $(p_{0},p_{0})$.
\end{thm}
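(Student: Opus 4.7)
The proof will proceed by pointwise-dominating $S^*$ by a grand maximal function operator introduced in \cite{bernicot2016sharp}, whose weak-$(p_0,p_0)$ and $L^2$ boundedness in the BFP setting is already established. For a sufficiently large integer $M$, the BFP grand maximal operator is
\begin{equation*}
  \mathcal{M}^\#_S f(x) \coloneqq \sup_{B \ni x} \left( \dashint_B \big\lvert S(I - e^{-r(B)^2 L})^{M} f \big\rvert^{q_0} \D{\mu} \right)^{1/q_0}.
\end{equation*}
As indicated in \cref{rmk:Assumptions}, the cancellation hypothesis in Assumption~\ref{assum:S}(c), together with the growth property \eqref{eqtn:Growth}, recovers the cancellation condition on $S$ that is assumed in \cite{bernicot2016sharp}; this transfer is precisely the purpose of the rest of this section preceding the theorem. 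Consequently, the boundedness theorems for $\mathcal{M}^\#_S$ in \cite{bernicot2016sharp} apply verbatim.

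The main step is to establish the pointwise bound
\begin{equation*}
  S^* f(x) \lesssim \mathcal{M}^\#_S f(x) + \mathcal{M}_{p_1}(Sf)(x) + \mathcal{M}_{p_1}(f)(x).
\end{equation*}
Fix $x\in M$ and a ball $B\ni x$ of radius $r$, and split $f$ by the binomial identity
\begin{equation*}
  f = (I - e^{-r^2 L})^{M} f + \sum_{k=1}^{M} \binom{M}{k}(-1)^{k+1} e^{-kr^2 L} f.
\end{equation*}
Since $S^{[r^2,\infty)} g \le S g$ pointwise, the $L^{q_0}(B)$-average of $S^{[r^2,\infty)}$ applied to the first summand is dominated by $\mathcal{M}^\#_S f(x)$. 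For each of the $M$ semigroup summands, the same inequality $S^{[r^2,\infty)} e^{-kr^2L} f \le S e^{-kr^2L} f$ combined with the Cotlar-type estimate in Assumption~\ref{assum:S}(d), applied on the ball $B(x_B, \sqrt{k}\,r)$ and pulled back to $B$ by the doubling property \eqref{eqtn:Doubling2}, yields a bound by a constant (depending on $M$ and $\nu$) times $\mathcal{M}_{p_1}(Sf)(x) + \mathcal{M}_{p_1}(f)(x)$. Summing over the finitely many values $k=1,\dots,M$ and taking the supremum over balls $B\ni x$ produces the claimed pointwise inequality.

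From this pointwise domination the theorem follows. The $L^2$-boundedness comes from the $L^2$-boundedness of each term on the right-hand side: $\mathcal{M}^\#_S$ by \cite{bernicot2016sharp}; and $\mathcal{M}_{p_1}(Sf)$, $\mathcal{M}_{p_1}(f)$ because $p_1<2$ and $S$ is $L^2$-bounded by Assumption~\ref{assum:S}(a). The weak-type $(p_0,p_0)$ estimate follows from the corresponding weak-type estimate for $\mathcal{M}^\#_S$ from \cite{bernicot2016sharp}, together with the weak-type $(p_1,p_1)$ of $\mathcal{M}_{p_1}$ (interpolating with $L^2$ if one wishes to land exactly at the $p_0$-endpoint and working initially on the dense class $L^2 \cap L^{p_0}$).

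The most delicate ingredient is not the decomposition above, which is routine once $M$ is chosen large, but the preliminary reduction to the setting of \cite{bernicot2016sharp}, i.e.\ deriving their cancellation condition for $S$ from our more natural cancellation condition for the constituents $\mathcal{Q}_t$. This reduction, carried out in the preceding portion of \cref{sec:Maximal} and crucially relying on the growth property \eqref{eqtn:Growth}, is what makes the BFP weak-type bound for $\mathcal{M}^\#_S$ available and thus drives the whole argument.
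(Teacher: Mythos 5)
Your overall architecture --- dominate $S^{*}$ pointwise by a grand maximal operator from \cite{bernicot2016sharp} plus Hardy--Littlewood maximal functions, and note that the transfer of the cancellation hypothesis from the $\mathcal{Q}_{t}$ to $S$ is what makes the BFP machinery available --- is the same as the paper's. However, the grand maximal operator you invoke is not the one whose boundedness \cite{bernicot2016sharp} provides, and this is a genuine gap. The operator treated in {\cite[Prop.~4.6]{bernicot2016sharp}}, and the one the paper uses, is
\begin{equation*}
  S^{\#}f(x) = \sup_{\substack{B \text{ ball} \\ B \ni x}}\br{\dashint_{B}\abs{S P^{(N)}_{r(B)^{2}}f}^{q_{0}}\D{\mu}}^{1/q_{0}},
\end{equation*}
that is, $S$ composed with the \emph{regularising} operator $P^{(N)}_{r(B)^{2}}$, not with the high-frequency part $(I-e^{-r(B)^{2}L})^{M}$. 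Your $\mathcal{M}^{\#}_{S}$ applies the full, untruncated square function to $(I-e^{-r^{2}L})^{M}f$, and for $h$ merely in $L^{p_{0}}$ the local average $\dashint_{B}\abs{Sh}^{q_{0}}\D{\mu}$ need not even be finite: the local $L^{p_{0}}\to L^{q_{0}}$ gain comes precisely from composing with $P^{(N)}_{r^{2}}$, which $(I-e^{-r^{2}L})^{M}$ does not supply. The step where you discard the truncation via $S^{[r^{2},\infty)}h\le Sh$ on the first summand is exactly where the argument breaks. The paper keeps the truncation on the high-frequency term and exploits the scale mismatch --- square-function scales $t\ge r^{2}$ against cancellation at scale $r^{2}$ --- through \cref{lem:BFPOffDiag}, obtaining the decay $(r^{2}/t)^{N/2}$ that bounds this term by $\mathcal{M}_{p_{0}}f(x)$; it is the \emph{low-frequency} term $S^{[r^{2},\infty)}P^{(N)}_{r^{2}}f\le SP^{(N)}_{r^{2}}f$ that is sent to the grand maximal function. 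In your decomposition the two halves are playing the wrong roles.

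A secondary issue concerns the endpoint. Your right-hand side contains $\mathcal{M}_{p_{1}}(Sf)$ and $\mathcal{M}_{p_{1}}f$ with $p_{1}\in[p_{0},2)$ possibly strictly larger than $p_{0}$; these are weak-type $(p_{1},p_{1})$, not $(p_{0},p_{0})$, and interpolating with $L^{2}$ only yields exponents above $p_{1}$, never below. The paper sidesteps this by reducing everything to $\mathcal{M}_{p_{0}}$ (exponent exactly $p_{0}$) and to $S^{\#}$, whose weak $(p_{0},p_{0})$ bound is established in \cite{bernicot2016sharp}, where the Cotlar terms are absorbed at the correct exponent inside a Calder\'{o}n--Zygmund-type argument. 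To repair your proof: replace $\mathcal{M}^{\#}_{S}$ by $S^{\#}$, keep the truncation $S^{[r^{2},\infty)}$ on the term $(I-P^{(N)}_{r^{2}})f$ and prove the analogue of \cref{lem:BFPOffDiag} for it, and then use the verification that $SQ_{t}^{(N)}$ inherits the BFP cancellation (\cref{prop:OffDiagonal}, which is where the growth condition \eqref{eqtn:Growth} enters) to license the citation of {\cite[Prop.~4.6]{bernicot2016sharp}}.
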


 The boundedness of this maximal function constitutes an important part
 of our sparse domination argument. The reliance of our argument on an associated maximal function is a well-known
 method for obtaining sparse bounds and finds its origins in the work of Lacey \cite{MR3625108}.
 It was later streamlined by Lerner \cite{MR3484688}.
 Quite often, the issue of proving sparse domination
 for a particular operator can be reduced to determining an
 appropriate associated maximal operator, proving its (weak) boundedness and then
 applying a stopping time argument that utilises this boundedness.

\subsection{A Pointwise Estimate}
\label{subsec:pointwise_estimate}
In order to prove the boundedness of the operator $S^{*}$ we will
require a couple of preliminary lemmas.
Given a ball $B$,
we define the average of a function $f$ over the annulus $S_k(B) \coloneqq 2^{k+1} B \setminus 2^k B$ for $k\in\mathbb{N}$
as the integral over $S_k(B)$ normalised by $\lvert 2^k B\rvert$.

Recall that $A_0$ is a positive number defined in \cref{assum:S} (c).
\begin{lem}
  \label{lem:OffDiagonal}
  For any $0 < s < r^{2} < t$ and $N \in \N$,
  \begin{equation*}
    \br{\dashint_{B} \abs{\mathcal{Q}_{t} Q_{s}^{(N)}f}^{q_{0}}\D{\mu}}^{\frac{1}{q_{0}}}
    \lesssim \frac{t^{A_{0}} s^{N}}{(s +
      t)^{A_{0} + N}}\br{\frac{\sqrt{t}}{r}}^{\frac{\nu}{q_{0}}}
    \sum_{j \geq 0} 2^{-j} \br{\dashint_{S_{j}(\widetilde{B})} \abs{f}^{p_{0}} \D{\mu}}^{\frac{1}{p_{0}}}
  \end{equation*}
  for any ball $B$ of radius $r$ and $\widetilde{B} := \frac{\sqrt{t}}{r}B$.
\end{lem}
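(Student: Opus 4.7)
The plan is to first use the cancellation hypothesis (Assumption \ref{assum:S}(c)) to extract the prefactor and reduce to estimating a single operator $\Theta^{(N)}_{s+t}$ that satisfies off-diagonal bounds at all scales, and then to treat the right hand side as an annular decomposition around $\widetilde B$ which matches the natural scale of $\Theta^{(N)}_{s+t}$.

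Writing $Q_s^{(N)} = c_N^{-1}(sL)^N e^{-sL}$ and applying the cancellation identity in Assumption \ref{assum:S}(c) with the roles of $s$ and $t$ exchanged yields
\begin{equation*}
\mathcal{Q}_t Q_s^{(N)} \;=\; c_N^{-1}\,\frac{t^{A_0} s^N}{(s+t)^{A_0+N}}\,\Theta^{(N)}_{s+t},
\end{equation*}
so the desired inequality reduces, after absorbing the explicit prefactor, to proving
\begin{equation*}
\left(\dashint_B \bigl|\Theta^{(N)}_{s+t} f\bigr|^{q_0}\,\D\mu\right)^{1/q_0}
\lesssim \left(\frac{\sqrt{t}}{r}\right)^{\nu/q_0}\sum_{j\ge 0} 2^{-j}\left(\dashint_{S_j(\widetilde B)}|f|^{p_0}\,\D\mu\right)^{1/p_0}.
\end{equation*}
The constraint $s<r^2<t$ is crucial here: it forces $\sqrt{s+t}\simeq\sqrt{t}$, matching the natural off-diagonal scale of $\Theta^{(N)}_{s+t}$ to the radius of $\widetilde B$.

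Next I would decompose $f = \sum_{j\ge 0} f\mathbbm{1}_{S_j(\widetilde B)}$ (with the convention $S_0(\widetilde B):=\widetilde B$) and estimate each piece via the off-diagonal estimates at all scales from Assumption \ref{assum:S}(c). For $j=0$, the estimate is applied directly to $B_1=\widetilde B$ and $B_2=B$; the two dilations $B_{i,\sqrt{s+t}}$ have volume comparable to $|\widetilde B|$, and the resulting ratio $|\widetilde B|^{1/q_0}/|B|^{1/q_0}$ produces the $(\sqrt{t}/r)^{\nu/q_0}$ factor via the doubling bound \eqref{eqtn:Doubling2}. For $j\ge 1$ I would cover $S_j(\widetilde B)\subset 2^{j+1}\widetilde B$ by a finite-overlap family of balls $\{B_{j,k}\}$ of radius $\sqrt{t}$ lying at distance $\simeq 2^j\sqrt{t}$ from $B$. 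Applying the off-diagonal estimate with $B_1 = B_{j,k}$ and $B_2=B$ gives the per-ball bound
\begin{equation*}
\left(\dashint_B \bigl|\Theta^{(N)}_{s+t}(f\mathbbm{1}_{B_{j,k}})\bigr|^{q_0}\D\mu\right)^{1/q_0}
\lesssim \left(\frac{\sqrt{t}}{r}\right)^{\nu/q_0}\! 2^{-j(\nu+1)}\!\left(\dashint_{B_{j,k}}|f|^{p_0}\D\mu\right)^{1/p_0},
\end{equation*}
where the $2^{-j(\nu+1)}$ factor comes from $\rho(2^j)$ with $\rho(x)=(1+x^2)^{-(\nu+1)/2}$.

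To finish, summing over $k$ via \cref{lemma:counting_R_balls} (with dilation factor $m\simeq 2^{j+1}$) consolidates the sum of averages over $\{B_{j,k}\}$ into a single average over $S_j(\widetilde B)$ at a cost of $m^\nu\simeq 2^{j\nu}$. The critical cancellation $2^{-j(\nu+1)}\cdot 2^{j\nu}=2^{-j}$ then produces precisely the geometric weight required, and reassembling the pieces over $j\ge 0$ together with the prefactor from the cancellation step completes the proof. The main point to watch is exactly this balance: it works cleanly because the off-diagonal decay exponent $\nu+1$ on $\rho$ in Assumption \ref{assum:S}(c) is exactly one larger than the doubling dimension $\nu$ that controls the number of scale-$\sqrt{t}$ balls needed to tile the annulus at radius $2^j\sqrt{t}$.
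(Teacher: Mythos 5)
Your proposal is correct and follows essentially the same route as the paper: both apply the cancellation identity of Assumption \ref{assum:S}(c) (with $s$ and $t$ swapped) to reduce to $\Theta^{(N)}_{s+t}$, cover each annulus $S_j(\widetilde B)$ by finitely overlapping balls of radius $\sqrt{t}$, use the all-scales off-diagonal bounds together with $s+t\simeq t$ and doubling to extract the factor $(\sqrt{t}/r)^{\nu/q_0}$, and close via \cref{lemma:counting_R_balls} through the same balance $2^{-j(\nu+1)}\cdot 2^{j\nu}=2^{-j}$. The only (cosmetic) deviation is your convention $S_0(\widetilde B)=\widetilde B$ for the $j=0$ term, which matches the intended reading of the statement.
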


\begin{proof}
Fix $B$ a ball of radius $r$. For $j \geq 0$, let $\mathcal{R}_{j}$ denote a collection of finite
  overlapping balls of radius $\sqrt{t}$ that is a cover for the set
  $S_{j}(\widetilde{B})$. Then,
  \cref{assum:S} (c) together with the triangle inequality
  produces
  \begin{align}\begin{split}
      \label{eqtn:lem:OffDiagonal1}
      &\br{\dashint_{B} \abs{\mathcal{Q}_{t} Q_{s}^{(N)}f}^{q_{0}}\D{\mu}}^{\frac{1}{q_{0}}} = \frac{t^{A_{0}} s^{N}}{(s + t)^{A_{0} +
          N}} \br{\dashint_{B} \abs{\Theta_{s + t}^{(N)}f}^{q_{0}}\D{\mu}}^{\frac{1}{q_{0}}} \\
      & \qquad \qquad \leq \frac{t^{A_{0}} s^{N}}{(s + t)^{A_{0} + N}}
      \sum_{j \geq 0} \sum_{R \in \mathcal{R}_{j}}
      \br{\dashint_{B} \abs{\Theta_{s + t}^{(N)}
          \br{\mathbbm{1}_{R}f}}^{q_{0}} \D{\mu}}^{\frac{1}{q_{0}}} \\
      & \qquad \qquad \lesssim \frac{t^{A_{0}} s^{N}}{(s + t)^{A_{0} + N}} \sum_{j \geq 0}
      \sum_{R \in \mathcal{R}_{j}}
      \frac{\abs{B}^{-\frac{1}{q_{0}}}
        \abs{R}^{\frac{1}{p_{0}}}}{\abs{B_{\sqrt{s + t}}}^{-\frac{1}{q_{0}}} \abs{R_{\sqrt{s + t}}}^{\frac{1}{p_{0}}}}
      \br{1 + \frac{d(B,R)^{2}}{s + t}}^{- \frac{\nu + 1}{2}}
      \br{\dashint_{R} \abs{f}^{p_{0}}\D{\mu}}^{\frac{1}{p_{0}}}.
    \end{split}\end{align}
  On utilising the doubling property of our metric space and subsequently $s + t \simeq t$,
\begin{align}\begin{split}
    \label{eqtn:lem:OffDiagonal2}
    \abs{B_{\sqrt{s + t}}}     &\lesssim \br{\frac{\sqrt{s + t}}{r}}^{\nu} \abs{B} \\
    &\simeq \br{\frac{\sqrt{t}}{r}}^{\nu} \abs{B}.
  \end{split}\end{align}
This, together with the fact that $\abs{R} \leq \abs{R_{\sqrt{s + t}}}$
gives
\begin{align}\begin{split}  
    \label{eqtn:OffDiagonal21}
    &\br{\dashint_{B} \abs{\mathcal{Q}_{t} Q_{s}^{(N)}f}^{q_{0}}\D{\mu}}^{\frac{1}{q_{0}}} \\ 
 & \qquad \qquad \lesssim \frac{t^{A_{0}} s^{N}}{(s +
   t)^{A_{0} + N}} \br{\frac{\sqrt{t}}{r}}^{\frac{\nu}{q_{0}}} \sum_{j
   \geq 0} \sum_{R \in \mathcal{R}_{j}}
 \br{1 + \frac{d(B,R)^2}{s + t}}^{-\frac{\nu + 1}{2}} \br{\dashint_{R}
   \abs{f}^{p_{0}} \D{\mu}}^{\frac{1}{p_{0}}}.
\end{split}\end{align}
For $R \in \mathcal{R}_{j}$, since $d(B,R) \geq (2^{j} - 1)
\sqrt{t} \simeq (2^{j} - 1) \sqrt{s + t}$ for $j \geq 1$, we have
\begin{equation}
  \label{eqtn:OffDiagonal211}
\br{1 + \frac{d(B,R)^{2}}{s + t}}^{-\frac{\nu + 1}{2}}
\lesssim 2^{-j(\nu + 1)}.
\end{equation}

Let $\Omega = S_j(\widetilde{B})$ and $\mathcal{R}_{j}$ as defined above in this proof.
The inclusion $ \Omega \subset 2^{j + 1} \widetilde{B} \subset 2^{j + 2}R$
holds for any $R \in \mathcal{R}_{j}$ and $j \in \N$. Thus \cref{lemma:counting_R_balls} implies that

$$
\sum_{R \in \mathcal{R}_{j}} \br{\dashint_{R}
   \abs{f}^{p_{0}} \D{\mu}}^{\frac{1}{p_{0}}} \lesssim 2^{j \nu} \br{\dashint_{S_{j}(\widetilde{B})} \abs{f}^{p_{0}} \D{\mu}}^{\frac{1}{p_{0}}}.
$$
Applying this estimate and \eqref{eqtn:OffDiagonal211} to
\eqref{eqtn:OffDiagonal21} gives us our result.
\end{proof}

Using the previous lemma, the following result can then be proved
using an argument identical to the first estimate of
{\cite[Lem.~4.1]{bernicot2016sharp}}. 
 
\begin{lem}
  \label{lem:BFPOffDiag}
  Fix $N \in \N$ with $N > \mathrm{max}(3 \nu/2 + 1, N_{0})$. For
  any ball $B$ of radius $r(B) > 0$ and $t > r(B)^{2}$ we have
  \begin{equation}
    \label{eqtn:BFPOffDiag1}
\br{\dashint_{B} \abs{\mathcal{Q}_{t} (I -    P_{r(B)^{2}}^{(N)})f}^{q_{0}} \D{\mu}}^{\frac{1}{q_{0}}} \lesssim
\br{\frac{r(B)^{2}}{t}}^{\frac{N}{2}} \sum_{l \geq 0} 2^{-l}
\br{\dashint_{2^{l} B} \abs{f}^{p_{0}} \D{\mu}}^{\frac{1}{p_{0}}}.
\end{equation}
 \end{lem}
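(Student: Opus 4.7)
My plan is to reduce the estimate to \cref{lem:OffDiagonal} via the adapted Calderón reproducing formula. Since $P_{t}^{(N)} = \mathrm{Id} + \int_{0}^{t} Q_{s}^{(N)} \D s/s$ as $L^{p}$-bounded operators, I would rewrite
$$
\mathcal{Q}_{t} (I - P_{r(B)^{2}}^{(N)}) f = -\int_{0}^{r(B)^{2}} \mathcal{Q}_{t} Q_{s}^{(N)} f \, \frac{\D s}{s},
$$
and then apply Minkowski's inequality to pull the $L^{q_{0}}(B, \D\mu/|B|)$ norm inside the $s$-integral. This reduces the problem to a uniform estimate on $\br{\dashint_{B} |\mathcal{Q}_{t} Q_{s}^{(N)} f|^{q_{0}} \D\mu}^{1/q_{0}}$ in the regime $0 < s < r(B)^{2} < t$, which is exactly the scope of \cref{lem:OffDiagonal}.

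Next I would insert the bound from \cref{lem:OffDiagonal} and collect the $s$-factors. Because $s < r(B)^{2} < t$, we have $s+t \simeq t$, hence $t^{A_{0}} s^{N}/(s+t)^{A_{0}+N} \simeq s^{N}/t^{N}$, and integrating against $\D s/s$ on $(0, r(B)^{2})$ produces a factor $\simeq (r(B)^{2}/t)^{N}$. Writing $r = r(B)$ and $\widetilde B = (\sqrt t/r) B$, the estimate becomes
$$
\br{\dashint_{B} |\mathcal{Q}_{t} (I - P_{r^{2}}^{(N)}) f|^{q_{0}} \D\mu}^{1/q_{0}} \lesssim \br{\frac{r^{2}}{t}}^{N} \br{\frac{\sqrt t}{r}}^{\nu/q_{0}} \sum_{j \geq 0} 2^{-j} \br{\dashint_{S_{j}(\widetilde B)} |f|^{p_{0}} \D\mu}^{1/p_{0}}.
$$

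The remaining task is geometric: convert annular averages over $S_{j}(\widetilde B)$ into averages over the dilates $2^{l} B$ demanded by the statement. I would choose $k \in \N$ to be the smallest integer with $2^{k} \geq \sqrt t/r$, which is well-defined since $t > r^{2}$. The balls $2^{j+1}\widetilde B$ and $2^{j+k+1} B$ then share the same center and have comparable radii, so the doubling property yields $|2^{j+k+1} B| \simeq |2^{j+1}\widetilde B| \simeq |2^{j}\widetilde B|$, whence $\dashint_{S_{j}(\widetilde B)} |f|^{p_{0}} \lesssim \dashint_{2^{j+k+1} B} |f|^{p_{0}}$. Relabelling $l = j+k+1$ then gives
$$
\sum_{j \geq 0} 2^{-j} \br{\dashint_{S_{j}(\widetilde B)} |f|^{p_{0}} \D\mu}^{1/p_{0}} \lesssim 2^{k+1} \sum_{l \geq 0} 2^{-l} \br{\dashint_{2^{l} B} |f|^{p_{0}} \D\mu}^{1/p_{0}} \lesssim \frac{\sqrt t}{r} \sum_{l \geq 0} 2^{-l} \br{\dashint_{2^{l} B} |f|^{p_{0}} \D\mu}^{1/p_{0}}.
$$

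Assembling everything produces a prefactor $(r^{2}/t)^{N} (\sqrt t/r)^{\nu/q_{0} + 1} = (r^{2}/t)^{N - \nu/(2q_{0}) - 1/2}$. Since $q_{0} > 2$ and $N > 3\nu/2 + 1$ by hypothesis, this exponent exceeds $N/2$; combined with $r^{2}/t < 1$, this yields the desired decay $(r^{2}/t)^{N/2}$. The only step requiring genuine care is the annular-to-dyadic bookkeeping, which is essentially the same accounting used in the first estimate of \cite[Lem.~4.1]{bernicot2016sharp} to which the statement refers.
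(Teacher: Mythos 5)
Your proof is correct and follows exactly the route the paper intends: the paper omits the argument, deferring to the first estimate of \cite[Lem.~4.1]{bernicot2016sharp}, which is precisely the expansion $I - P_{r(B)^2}^{(N)} = -\int_0^{r(B)^2} Q_s^{(N)}\,\frac{\D{s}}{s}$ followed by Minkowski, the off-diagonal bound of \cref{lem:OffDiagonal}, integration in $s$ (using $s+t\simeq t$), and the annulus-to-dilate bookkeeping you carry out. Your exponent accounting at the end is also right: $N - \nu/(2q_0) - 1/2 \ge N/2$ follows from $N > 3\nu/2 + 1$ and $q_0 > 2$, so the stated decay $(r(B)^2/t)^{N/2}$ is recovered.
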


 Let $S^{\#}$ denote the maximal operator
 \begin{equation*}
   S^{\#}f(x) := \sup_{\substack{B \text{ ball} \\ B \ni x}}
   \br{\dashint_{B} \abs{S P^{(N)}_{r(B)^{2}}}^{q_{0}} \D{\mu}}^{\frac{1}{q_{0}}}.
 \end{equation*}
This operator was introduced in \cite{bernicot2016sharp} and
formed an important part of their sparse domination argument. 

 \begin{prop}
   \label{prop:Pointwise}
   For every $x \in M$,
\begin{equation*}\label{eq:pointwise_estimate_maximal_truncations}
  S^{*}f(x) \lesssim S^{\#}f(x) + \mathcal{M}_{p_{0}}f(x).
\end{equation*}   
   \end{prop}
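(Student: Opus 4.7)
The plan is to decompose the identity as $I = P^{(N)}_{r(B)^2} + (I - P^{(N)}_{r(B)^2})$ inside the integrand of $\mathcal{Q}_{t}f$. Fix $x \in M$, a ball $B \ni x$ with radius $r \coloneqq r(B)$, and a value of $N$ large enough that \cref{lem:BFPOffDiag} is applicable (e.g.\ $N > \max(3\nu/2 + 1, N_0)$). Triangle inequality in $L^2((r^2,\infty);\D t/t)$ yields
\begin{equation*}
S^{[r^2,\infty)}f(y) \leq \Bigl(\int_{r^2}^\infty |\mathcal{Q}_{t} P^{(N)}_{r^2}f(y)|^2 \tfrac{\D t}{t}\Bigr)^{\!1/2} + \Bigl(\int_{r^2}^\infty |\mathcal{Q}_{t}(I - P^{(N)}_{r^2})f(y)|^2 \tfrac{\D t}{t}\Bigr)^{\!1/2}.
\end{equation*}
The first term is dominated pointwise by $S P^{(N)}_{r^2}f(y)$ (since the integral runs over a subset of $(0,\infty)$), so its $L^{q_0}$-average over $B$ is at most $S^{\#}f(x)$.

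For the second term I would use the generalised Minkowski inequality, which is legal since $q_0 \geq 2$: for nonnegative $F_t$,
\begin{equation*}
\Bigl(\dashint_B \Bigl(\int_{r^2}^\infty F_t^2 \tfrac{\D t}{t}\Bigr)^{\!q_0/2} \D\mu\Bigr)^{\!2/q_0} \leq \int_{r^2}^\infty \Bigl(\dashint_B F_t^{q_0}\D\mu\Bigr)^{\!2/q_0} \tfrac{\D t}{t}.
\end{equation*}
Apply this to $F_t = |\mathcal{Q}_{t}(I - P^{(N)}_{r^2})f|$. For each fixed $t > r^2$, \cref{lem:BFPOffDiag} gives
\begin{equation*}
\Bigl(\dashint_B |\mathcal{Q}_{t}(I - P^{(N)}_{r^2})f|^{q_0}\D\mu\Bigr)^{\!1/q_0} \lesssim \Bigl(\frac{r^2}{t}\Bigr)^{\!N/2} \sum_{l \geq 0} 2^{-l} \Bigl(\dashint_{2^l B}|f|^{p_0}\D\mu\Bigr)^{\!1/p_0},
\end{equation*}
and the sum over $l$ is $\lesssim \mathcal{M}_{p_0}f(x)$ by definition of the uncentered Hardy--Littlewood maximal function (using $x \in B \subset 2^l B$). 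Squaring and integrating,
\begin{equation*}
\int_{r^2}^\infty \Bigl(\frac{r^2}{t}\Bigr)^{\!N} \tfrac{\D t}{t} = \frac{1}{N} < \infty,
\end{equation*}
so the contribution of the second term to the $L^{q_0}$-average is $\lesssim \mathcal{M}_{p_0}f(x)$.

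Combining the two estimates and taking the supremum over balls $B \ni x$ gives the desired pointwise bound $S^{*}f(x) \lesssim S^{\#}f(x) + \mathcal{M}_{p_0}f(x)$. The main (and only mildly technical) point is verifying that the generalised Minkowski exchange is valid—this requires $q_0 \geq 2$, which is guaranteed by \cref{assum:L}—and ensuring the case $q_0 = \infty$ (where the $L^{q_0}$-average is an essential supremum) is handled by the trivial pointwise bound $F_t(y)^2 \leq \|F_t\|_\infty^2$ before integrating in $t$. Everything else is bookkeeping on the decay $(r^2/t)^{N/2}$ coming from \cref{lem:BFPOffDiag}.
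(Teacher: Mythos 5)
Your argument is correct and is essentially the paper's own proof: the same decomposition $I = P^{(N)}_{r(B)^2} + (I - P^{(N)}_{r(B)^2})$, the same domination of the $P^{(N)}$ piece by $S^{\#}f(x)$ via extending the truncated $t$-integral to $(0,\infty)$, and the same Minkowski exchange plus Lemma \ref{lem:BFPOffDiag} for the $(I - P^{(N)})$ piece, with the sum over $l$ absorbed into $\mathcal{M}_{p_0}f(x)$. Your explicit remarks on why the Minkowski step requires $q_0 \ge 2$ and on the $q_0 = \infty$ case are points the paper leaves implicit, but they do not change the route.
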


\begin{proof}  
  For $x \in
M$ and ball
$B \subset M$ containing $x$, the triangle inequality implies
  \begin{align*}\begin{split}  
 \br{\dashint_{B} \br{\int^{\infty}_{r(B)^{2}}
     \abs{\mathcal{Q}_{t} f}^{2} \frac{\D{t}}{t}}^{\frac{q_{0}}{2}}
   \D{\mu}}^{\frac{1}{q_{0}}} &\leq \br{\dashint_{B}
   \br{\int^{\infty}_{r(B)^{2}} \abs{\mathcal{Q}_{t} (I -
       P_{r(B)^{2}}^{(N)})f}^{2} \, \frac{\D{t}}{t}}^{\frac{q_{0}}{2}} \D{\mu}}^{\frac{1}{q_{0}}} \\
 & \qquad + \br{\dashint_{B}
   \br{\int^{\infty}_{r(B)^{2}}\abs{\mathcal{Q}_{t}
       P_{r(B)^{2}}^{(N)}f}^{2} \, \frac{\D{t}}{t}}^{\frac{q_{0}}{2}} \D{\mu}}^{\frac{1}{q_{0}}}.
\end{split}\end{align*}
For the first term, apply Minkowski's inequality followed by Lemma
\ref{lem:BFPOffDiag} to obtain
\begin{align*}\begin{split}  
 &\br{\dashint_{B} \br{\int^{\infty}_{r(B)^{2}}
     \abs{\mathcal{Q}_{t} (I - P_{r(B)^{2}}^{(N)})f}^{2} \,
     \frac{\D{t}}{t}}^{\frac{q_{0}}{2}} \D{\mu}}^{\frac{1}{q_{0}}} \\ 
 & \qquad \qquad \le \br{\int^{\infty}_{r(B)^{2}} \br{\dashint_{B}
     \abs{\mathcal{Q}_{t} (I - P_{r(B)^{2}}^{(N)})f}^{q_{0}} \D{\mu}}^{\frac{2}{q_{0}}} \, \frac{\D{t}}{t}}^{\frac{1}{2}} \\
 & \qquad \qquad \lesssim \br{\int^{\infty}_{r(B)^{2}}
   \br{\br{\frac{r(B)^{2}}{t}}^{\frac{N}{2}} \sum_{l \geq 0} 2^{-l} \br{\dashint_{2^{l} B} \abs{f}^{p_{0}} \D{\mu}}^{\frac{1}{p_{0}}}}^{2} \, \frac{\D{t}}{t}}^{\frac{1}{2}} \\
 & \qquad \qquad = r(B)^{N} \br{\int^{\infty}_{r(B)^{2}}
   \frac{\D{t}}{t^{N + 1}}}^{\frac{1}{2}} \sum_{l \geq 0} 2^{-l} \br{\dashint_{2^{l} B} \abs{f}^{p_{0}} \D{\mu}}^{\frac{1}{p_{0}}} \\
 & \qquad \qquad \lesssim \mathcal{M}_{p_{0}}f(x).
\end{split}\end{align*}
For the second term,
\begin{align*}\begin{split}  
 \br{\dashint_{B}
   \br{\int^{\infty}_{r(B)^{2}} \abs{\mathcal{Q}_{t} P_{r(B)^{2}}^{(N)}f}^{2} \, \frac{\D{t}}{t}}^{\frac{q_{0}}{2}} \D{\mu}
 }^{\frac{1}{q_{0}}} &\leq \br{\dashint_{B} \br{\int^{\infty}_{0}
   \abs{\mathcal{Q}_{t} P^{(N)}_{r(B)^{2}}f}^{2} \,
   \frac{\D{t}}{t}}^{\frac{q_{0}}{2}} \D{\mu}}^{\frac{1}{q_{0}}} \\
&= \br{\dashint_{B} \abs{S P^{(N)}_{r(B)^{2}} f}^{q_{0}} \D{\mu}}^{\frac{1}{q_{0}}} \\
&\leq S^{\#}f(x).
\end{split}\end{align*}
We thus obtain the pointwise estimate
\eqref{eq:pointwise_estimate_maximal_truncations}.
 \end{proof}

\subsection{Cancellation of \texorpdfstring{$S$}{S} with respect to \texorpdfstring{$L$}{L}} 

As the operator $\mathcal{M}_{p_{0}}$ is $L^{2}$-bounded and weak-type
$(p_{0},p_{0})$, the pointwise bound of the previous section implies
that in order to prove Theorem \ref{thm:Maximal} it will be sufficient to show that $S^{\#}$ is $L^{2}$-bounded
and weak-type $(p_{0},p_{0})$. According to
{\cite[Prop.~4.6]{bernicot2016sharp}}, $S^{\#}$ will be
$L^{2}$-bounded and weak-type $(p_{0},p_{0})$
if $S$ satisfies the assumptions of \cite{bernicot2016sharp}. The only
assumption from \cite{bernicot2016sharp} that is not included in
our hypotheses is Assumption 1.1(b) of \cite{bernicot2016sharp}, the cancellative property of $S$
with respect to $L$. Instead, for us, the cancellation has
been imposed upon the constituent operators $\mathcal{Q}_{t}$. In this
section it will be proved that cancellation on $\mathcal{Q}_{t}$ with
respect to $L$ implies cancellation on $S$ with respect to $L$.

\begin{prop} 
 \label{prop:OffDiagonal} 
  There exists $\widetilde{N}_{0} \geq N_{0}$ such that for all integers $N
 \geq \widetilde{N}_{0}$, $t > 0$
 and balls $B_{1}, \, B_{2}$ of radius $\sqrt{t}$,
 \begin{equation}
   \label{eqtn:OffDiagonalMain}
\br{\dashint_{B_{2}} \abs{S Q_{t}^{(N)}f}^{q_{0}} \D{\mu}}^{\frac{1}{q_{0}}}
\lesssim \br{1 + \frac{d(B_{1},B_{2})^{2}}{t}}^{- \frac{\nu + 1}{2}}
\br{\dashint_{B_{1}} \abs{f}^{p_{0}} \D{\mu}}^{\frac{1}{p_{0}}}
\end{equation}
for all $f \in L^{p_{0}}(B_{1})$.
\end{prop}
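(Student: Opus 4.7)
The plan is to combine the cancellation identity from \cref{assum:S}(c) with the off-diagonal estimates at all scales for $\{\Theta^{(N)}_r\}_{r>0}$, and to handle the time integral defining $S$ by splitting at $s = t$. For any $N \geq N_0$, \cref{assum:S}(c) applied to $Q_t^{(N)} = c_N^{-1} (tL)^N e^{-tL}$ gives the factorisation
\begin{equation*}
\mathcal{Q}_s Q_t^{(N)} = c_N^{-1} \frac{s^{A_0} t^N}{(s+t)^{A_0+N}} \Theta^{(N)}_{s+t},
\end{equation*}
so Minkowski's integral inequality brings the $L^{q_0}$-average inside the time integral to produce
\begin{equation*}
\br{\dashint_{B_2} \abs{SQ_t^{(N)} f}^{q_0} \D{\mu}}^{1/q_0}
\lesssim \br{\int_0^\infty \br{\frac{s^{A_0} t^N}{(s+t)^{A_0+N}}}^2
\br{\dashint_{B_2} \abs{\Theta^{(N)}_{s+t} (f \1_{B_1})}^{q_0} \D{\mu}}^{2/q_0} \frac{\D{s}}{s}}^{1/2}.
\end{equation*}
The inner $L^{q_0}$-average is then controlled via the all-scales off-diagonal bound for $\Theta^{(N)}_{s+t}$, which yields the factor $(1 + d(B_1,B_2)^2/(s+t))^{-(\nu+1)/2}$ together with a ratio involving the measures of the dilated balls $B_{i,\sqrt{s+t}}$.

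I would then split the $s$-integral at $s = t$. On $(0,t]$, $s + t \simeq t$, so doubling gives $|B_{i,\sqrt{s+t}}| \simeq |B_i|$, the decay factor reduces to the desired $(1 + d(B_1,B_2)^2/t)^{-(\nu+1)/2}$, and the leftover prefactor of order $(s/t)^{A_0}$ is square-integrable against $\D{s}/s$ on $(0,t)$. On $(t,\infty)$, $s + t \simeq s$, and doubling yields $|B_{1,\sqrt{s+t}}| \gtrsim |B_1|$ together with $|B_{2,\sqrt{s+t}}|^{1/q_0} \lesssim (s/t)^{\nu/(2q_0)} |B_2|^{1/q_0}$, producing a polynomial growth in $s/t$ that must be absorbed by the cancellation prefactor $(t/s)^N$. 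The squared integrand thus has the effective form $(t/s)^{2N-\nu/q_0}(1+d(B_1,B_2)^2/s)^{-(\nu+1)}$ against $\D{s}/s$, and a short case analysis according to the sign of $d(B_1,B_2)^2 - s$ reduces the tail to two elementary power-law integrals. Each of them is bounded by $(t/d(B_1,B_2)^2)^{\nu+1}$ (or by a constant when $d(B_1,B_2)^2 \leq t$) provided $N$ is strictly larger than $(\nu + 1 + \nu/q_0)/2$. Taking $\widetilde N_0$ to be any integer exceeding both $N_0$ and this threshold, and extracting a square root, yields \eqref{eqtn:OffDiagonalMain}.

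The principal difficulty is the tail $s \gg t$: the scale $\sqrt{s+t}$ at which the off-diagonal decay of $\Theta^{(N)}_{s+t}$ is measured becomes much larger than $\sqrt{t}$, so both the spatial decay in $d(B_1,B_2)$ and the measures of the natural balls $B_{i,\sqrt{s+t}}$ deteriorate with $s$. It is precisely the cancellation factor $(t/s)^N$ extracted from \cref{assum:S}(c), with $N$ chosen sufficiently large, that compensates for both phenomena and delivers the required polynomial off-diagonal decay at the original scale $\sqrt{t}$.
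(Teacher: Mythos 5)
Your proposal is correct and follows essentially the same route as the paper: factorise $\mathcal{Q}_s Q_t^{(N)}$ via \cref{assum:S}(c), pull the $L^{q_0}$-average inside the $s$-integral by Minkowski, invoke the all-scales off-diagonal bounds for $\Theta^{(N)}_{s+t}$, split at $s=t$, and reduce to elementary power-law integrals with a case analysis on whether $d(B_1,B_2)^2$ exceeds the relevant time parameter.

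The one substantive difference is how you treat the ratio $\abs{B_1}^{1/p_0}\abs{B_{1,\sqrt{s+t}}}^{-1/p_0}\abs{B_{2,\sqrt{s+t}}}^{1/q_0}\abs{B_2}^{-1/q_0}$ on the tail $s>t$. The paper invokes the growth condition \eqref{eqtn:Growth} to identify this ratio with $\varphi\big(\sqrt{t}/\sqrt{s+t}\big)^{1/p_0-1/q_0}\le 1$, so that the only requirement on $N$ is $2N>\nu+1$. You instead use only doubling, accept the polynomial loss $(s/t)^{\nu/(2q_0)}$ from the $B_2$ factor, and absorb it with the cancellation prefactor by requiring $2N>\nu+1+\nu/q_0$; your case analysis on the tail then goes through exactly as you describe. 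Since $N$ is a free parameter here, this is a perfectly valid trade, and it has the mild advantage of not using \eqref{eqtn:Growth} at this step at all (the paper needs that hypothesis elsewhere in any case, to place itself within the framework of \cite{bernicot2016sharp}). No gaps.
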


\begin{proof}  
  For $I \subset [0,\infty)$, define the operator  
  \begin{equation*}
    S^{I} f(x) := \br{\int_{I} \abs{\mathcal{Q}_{s} f}^{2} \frac{\D{s}}{s}}^{\frac{1}{2}}.
  \end{equation*}
  
  In order to prove \eqref{eqtn:OffDiagonalMain}, it is sufficient to
  show that a similar estimate holds for the operators $S^{[0,t]}$ and
  $S^{[t,\infty)}$.

  For $I \subset [0,\infty)$, Minkowski's
inequality implies that
\begin{align*}\begin{split}  
    \br{\dashint_{B_{2}} \abs{S^{I} Q_{t}^{(N)}f}^{q_{0}} \D{\mu}}^{\frac{1}{q_{0}}} &= \br{\dashint_{B_{2}} \br{\int_{I}
        \abs{\mathcal{Q}_{s} Q_{t}^{(N)}f}^{2} \frac{\D s}{s}}^{\frac{q_{0}}{2}} \D \mu}^{\frac{2}{q_{0}}\frac12} \\
    &\le \brs{\int_{I} \br{\dashint_{B_{2}} \abs{\mathcal{Q}_{s}
          Q_{t}^{(N)}f}^{q_{0}} \D \mu}^{\frac{2}{q_{0}}} \frac{\D s}{s}}^{\frac{1}{2}}.
  \end{split}\end{align*}
From Assumption \ref{assum:S}(c) and the growth property \eqref{eqtn:Growth}, we have
\begin{align*}\begin{split}
 &\br{\dashint_{B_{2}} \abs{S^{I} Q_{t}^{(N)}f}^{q_{0}} \D{\mu}}^{\frac{1}{q_{0}}} \leq \brs{\int_{I} \frac{s^{2 A_{0}} t^{2
      N}}{(s + t)^{2(A_{0} + N)}} \br{\dashint_{B_{2}} \abs{\Theta_{s
        + t}^{(N)}f}^{q_{0}} \D{\mu}}^{\frac{2}{q_{0}}} \,
  \frac{\D{s}}{s}}^{\frac{1}{2}} \\
&\lesssim \brs{\int_{I} \frac{s^{2 A_{0}} t^{2 N}}{(s + t)^{2
      (A_{0} + N)}} \frac{\abs{B_{1}}^{\frac{2}{p_{0}}} \cdot
    \abs{B_{2}}^{-\frac{2}{q_{0}}}}{\abs{B_{1,\sqrt{s +t}}}^{\frac{2}{p_{0}}} \abs{B_{2,\sqrt{s + t}}}^{-\frac{2}{q_{0}}}} \br{1 + \frac{d(B_{1},B_{2})^{2}}{s +
      t}}^{-(\nu + 1)} \, \frac{\D{s}}{s}}^{\frac{1}{2}} \br{\dashint_{B_{1}} \abs{f}^{p_{0}}
  \D{\mu}}^{\frac{1}{p_{0}}} \\
&\simeq \brs{\int_{I} \frac{s^{2 A_{0}} t^{2 N}}{(s + t)^{2
      (A_{0} + N)}} \varphi\br{\frac{\sqrt{t}}{\sqrt{s + t}}}^{2 \br{\frac{1}{p_{0}} - \frac{1}{q_{0}}}} \br{1 + \frac{d(B_{1},B_{2})^{2}}{s +
      t}}^{-(\nu + 1)} \, \frac{\D{s}}{s}}^{\frac{1}{2}} \br{\dashint_{B_{1}} \abs{f}^{p_{0}}
  \D{\mu}}^{\frac{1}{p_{0}}}.
\end{split}\end{align*}
The property that $\varphi(a) \leq 1$ for $a \leq 1$ then gives
\begin{equation}
   \label{eqtn:OffDiagonal1}
   \br{\dashint_{B_{2}} \abs{S^{I} Q_{t}^{(N)}f}^{q_{0}} \D{\mu}}^{\frac{1}{q_{0}}}
   \lesssim  \brs{\int_{I} \frac{s^{2 A_{0}} t^{2 N}}{(s + t)^{2
      (A_{0} + N)}} \br{1 + \frac{d(B_{1},B_{2})^{2}}{s +
      t}}^{-(\nu + 1)} \, \frac{\D{s}}{s}}^{\frac{1}{2}} \br{\dashint_{B_{1}} \abs{f}^{p_{0}}
  \D{\mu}}^{\frac{1}{p_{0}}}.
  \end{equation}
In order to prove the desired off-diagonal estimate, it is then sufficient to prove
\begin{align}\begin{split}  
    \label{eqtn:AI}
    A_{I} &:=    \int_{I} \frac{s^{2 A_{0}}t^{2N}}{(s + t)^{2
      (A_{0} + N)}} \br{1 + \frac{d(B_{1},B_{2})^{2}}{s +
      t}}^{-(\nu + 1)} \, \frac{\D{s}}{s} \\ &\lesssim  
\br{1 + \frac{d(B_{1},B_{2})^{2}}{t}}^{- (\nu + 1)},
 \end{split}\end{align}
for both intervals $I = [0,t]$ and $I = [t,\infty)$.
Consider first the interval $I = [0,t]$. For $s$ contained in $[0,t]$ we will have $s + t \leq 2 t$ and therefore  
$$
\br{1 + \frac{d(B_{1},B_{2})^{2}}{s + t}}^{- (\nu + 1)} \lesssim \br{1
+ \frac{d(B_{1},B_{2})^{2}}{t}}^{- (\nu + 1)}.
$$
This gives
\begin{align*}\begin{split}
    A_{I} &\lesssim \br{1 + \frac{d(B_{1},B_{2})^{2}}{t}}^{- (\nu +
      1)} 
    \int_{0}^{t} \frac{s^{2 A_{0}} t^{2N}}{(s + t)^{2(A_{0} + N)}} \,
    \frac{\D{s}}{s} \\
    &\leq \br{1 + \frac{d(B_{1},B_{2})^{2}}{t}}^{-(\nu + 1)}
     \frac{1}{t}
    \int^{t}_{0} \D{s} \\
    &= \br{1 + \frac{d(B_{1},B_{2})^{2}}{t}}^{-(\nu + 1)}.
\end{split}\end{align*}
Applying this to \eqref{eqtn:OffDiagonal1} produces the desired
off-diagonal bounds for the operator $S^{[0,t]}$.

Next, let's prove off-diagonal bounds for the operator
$S^{[t,\infty)}$. Suppose first that $t >
d(B_{1},B_{2})^{2}$. When this occurs, note that
\begin{equation}
  \label{eqtn:OffDiagonal3}
\br{1 + \frac{d(B_{1},B_{2})^{2}}{t}}^{-(\nu + 1)} \simeq 1.
\end{equation}
We then have,
\begin{align*}\begin{split}  
 A_{I} &\leq 
\int^{\infty}_{t} \frac{s^{2 A_{0}} t^{2 N}}{(s + t)^{2(A_{0} + N)}}
\, \frac{\D{s}}{s} \\
&\leq t^{2 N}
\int^{\infty}_{t} \frac{1}{(s + t)^{2 N + 1}} \D{s} \\
&\simeq 1 \\
&\simeq  \br{1 +
  \frac{d(B_{1},B_{2})^{2}}{t}}^{- (\nu + 1)}.
 \end{split}\end{align*}
Applying this to \eqref{eqtn:OffDiagonal1} produces the desired
off-diagonal estimates for $S^{[t,\infty)}$.

Finally, we must prove off-diagonal decay for $S^{[t,\infty)}$ for
the case $t \leq d(B_{1},B_{2})^{2}$. We have,
\begin{align*}\begin{split}  
 A_{I} &=
 \int^{\infty}_{t} \frac{s^{2 A_{0}} t^{2 N}}{(s + t)^{2 (A_{0} + N)}}
 \br{1 + \frac{d(B_{1},B_{2})^{2}}{s + t}}^{-(\nu + 1)} \,
 \frac{\D{s}}{s} \\
 &\leq \frac{t^{2
     N}}{d(B_{1},B_{2})^{2 (\nu + 1)}} \int^{\infty}_{t} \frac{\D{s}}{(s
   + t)^{2 N + 1 - (\nu + 1)}}.
\end{split}\end{align*}
Select $\widetilde{N}_{0} \geq N_{0}$ large enough so that $N \geq \widetilde{N}_{0}$ implies $2N >
\nu + 1$. Then,
\begin{align*}\begin{split}  
 A_{I} &\lesssim \frac{t^{\nu + 1}}{d(B_{1},B_{2})^{2 (\nu + 1)}} \\
 &\lesssim  \br{1
 + \frac{d(B_{1},B_{2})^{2}}{t}}^{-(\nu + 1)},
 \end{split}\end{align*}
where the last line follows from the condition $t \leq
d(B_{1},B_{2})^{2}$. Applying this to \eqref{eqtn:OffDiagonal1}
completes our proof.
\end{proof}

The below corollary, in combination with the pointwise estimate
\cref{prop:Pointwise}, completes the proof of \cref{thm:Maximal}.

\begin{cor}[{\cite[Prop.~4.6]{bernicot2016sharp}}]
 \label{cor:Maximal} 
 The maximal function $S^{\#}$ is bounded on
 $L^{2}$, and weak-type $(p_{0},p_{0})$.
\end{cor}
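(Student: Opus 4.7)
The plan is to reduce the statement directly to {\cite[Prop.~4.6]{bernicot2016sharp}} by checking that our hypotheses on $S$ and $L$ place us strictly within the framework of that paper. As emphasised in \cref{rmk:Assumptions}, the only assumption from \cite{bernicot2016sharp} not present verbatim in our setting is their Assumption~1.1(b), which demands off-diagonal control of the composition $S Q_{t}^{(N)}$ (the cancellation of $S$ with respect to $L$). We have instead imposed such cancellation on the constituent operators $\mathcal{Q}_{s}$ via \cref{assum:S}(c). So the single task left is to upgrade the cancellation from $\mathcal{Q}_{s}$ to $S$.

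First, I would invoke \cref{prop:OffDiagonal} which was just proved: for $N \geq \widetilde{N}_{0}$, every $t > 0$, and every pair of balls $B_{1}, B_{2}$ of radius $\sqrt{t}$, one has
\begin{equation*}
\br{\dashint_{B_{2}} \abs{S Q_{t}^{(N)} f}^{q_{0}} \D{\mu}}^{\frac{1}{q_{0}}}
\lesssim \br{1 + \frac{d(B_{1},B_{2})^{2}}{t}}^{-\frac{\nu+1}{2}}
\br{\dashint_{B_{1}} \abs{f}^{p_{0}} \D{\mu}}^{\frac{1}{p_{0}}}.
\end{equation*}
This is exactly the quantitative form of the cancellation condition imposed on $S$ in \cite{bernicot2016sharp}. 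All remaining hypotheses needed there are either identical to ours or weaker: \cref{assum:L} gives the $\omega$-accretivity and $(p_{0},q_{0})$ off-diagonal estimates of the semigroup; \cref{assum:S}(a) gives $L^{2}$-boundedness of $S$; and \cref{assum:S}(d) is the Cotlar-type inequality required by \cite{bernicot2016sharp}. The metric-measure side of the framework also matches, since our growth condition \eqref{eqtn:Growth} strengthens the doubling hypothesis used there.

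Having verified that our operator $S$ fits within the hypotheses of \cite{bernicot2016sharp}, the weak-type $(p_{0},p_{0})$ and $L^{2}$ boundedness of the maximal operator $S^{\#}$ follow immediately from {\cite[Prop.~4.6]{bernicot2016sharp}} applied to our $S$. Combined with the pointwise bound $S^{*}f \lesssim S^{\#}f + \mathcal{M}_{p_{0}}f$ from \cref{prop:Pointwise} and the known $L^{2}$ and weak-$(p_{0},p_{0})$ bounds for $\mathcal{M}_{p_{0}}$, this finishes the proof of \cref{thm:Maximal} as well.

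The main (and really only) delicate point is the verification that the off-diagonal bound of \cref{prop:OffDiagonal} has precisely the decay exponent and the $(p_{0},q_{0})$ averaging structure needed by {\cite[Prop.~4.6]{bernicot2016sharp}}; once this is granted, the corollary is essentially a citation. No further computation should be required.
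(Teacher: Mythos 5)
Your proposal is correct and follows essentially the same route as the paper: the authors also reduce \cref{cor:Maximal} to {\cite[Prop.~4.6]{bernicot2016sharp}} by observing that the only missing hypothesis is the cancellation of $S$ with respect to $L$, which is exactly what \cref{prop:OffDiagonal} supplies. The corollary is then, as you say, essentially a citation once that verification is in place.
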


\section{Sparse Bounds}
\label{sec:Sparse}

In this section we prove \cref{t.main}.
Since $f$ and $g$ have compact support,
without loss of generality we can assume that
they are both supported on $5Q_0$
for a fixed dyadic cube $Q_0 \in \mathscr{D}$.
Here we show the existence of a sparse collection $\mathcal{S}_0$
inside $Q_0$ such that
\begin{equation*}
  \int_{Q_0} (Sf)^2  g \D{\mu}
  \lesssim \sum_{P\in\mathcal{S}_0} \left( \dashint_{5P} \lvert f \rvert^{p_0} \D{\mu} \right)^{2/p_0}  \left(\dashint_{5P} \lvert g\rvert^{q_0^*} \D{\mu} \right)^{1/q_0^*} |P|.
\end{equation*}

The enlarged cube $5Q_0$ is the union of finitely many
disjoint cubes $Q$, %
and the proof can be adapted for any of these $Q$.
Taking the union of finitely many sparse collections,
we obtain a sparse collection $\mathcal{S}$.

We will decompose our quantity in different terms:
  all will be controlled by the averages of $f$ and $g$ but
  one.
  This last term is where $f$ assumes a large value and it is similar to the original quantity
  but on a smaller scale.
  We can then iterate the decomposition, which terminates
  since the measure of the set we are decomposing shrinks
  geometrically at each iteration.

\subsection{Decomposition}
Denote by $\ell(P)$ the side length of the dyadic cube $P$.
Let us consider the (localised) dyadic version of the operator introduced in \cref{sec:Maximal},
\begin{gather*}
  \mathcal{M}_{Q_0,p_0}^*f(x) \coloneqq \sup_{\substack{P \in \mathscr{D} \\ P \subseteq Q_0}} \Big(\inf_{y \in P} \mathcal{M}_{p_0}f(y)\Big) \1_P(x) , \\
  S_{Q_0}^{*} f(x) \coloneqq \sup_{\substack{P \in \mathscr{D} \\ P \subseteq Q_0}} \Bigg( \fint_{P} \Bigg\lvert \int_{\ell(P)^2}^\infty \lvert \mathcal{Q}_t f\rvert^2 \frac{\D{t}}{t} \Bigg\rvert^{\frac{q_0}2} \D{\mu} \Bigg)^{1/q_0} \1_{P}(x) .
\end{gather*}

For a positive $\eta$ to be fixed later,
consider the set
\begin{equation*}
  E(Q_0) \coloneqq \left\{ x \in Q_0 \,:\, \max\Big\{\mathcal{M}^*_{Q_0,p_0} f(x),S_{Q_0}^{*} f(x)\Big\} > \eta \left(\fint_{5Q_0} \lvert f\rvert^{p_0} \D{\mu}\right)^{1/p_0} \right\} .
\end{equation*}
Since the operators $\mathcal{M}^*_{Q_0,p_0}$ and $S_{Q_0}^{*}$ are weak-type $(p_0,p_0)$, as shown in \cref{sec:Maximal},
there exists $\eta > 0$ such that $\lvert E(Q_0)\rvert \le \frac12
\lvert Q_0\rvert$. Decompose our form as
\begin{equation*}
  \int_{Q_0} (Sf)^2 g \D{\mu} =  \int_{Q_0 \setminus E(Q_0)} (Sf)^2 g \D{\mu} + \int_{E(Q_0)} (Sf)^2 g \D{\mu} \eqqcolon \roman{1} + \roman{2}
\end{equation*}

Term \roman{1} is controlled by using Lebesgue differentiation theorem as in \cite[Lem.~4.4]{bernicot2016sharp}
since $ \lvert Sf(x) \rvert^2  \le \lvert S^{*}_{Q_0}f(x) \rvert^2 $ for $\mu$-almost every $x$.
Thus, for $x \in Q_0\setminus E(Q_0)$ we have
\begin{align*}
  \int_{Q_0 \setminus E(Q_0)} (Sf)^2 g \D{\mu}
  & \lesssim \eta^2 \left(\fint_{5Q_0} \lvert f\rvert^{p_0} \D{\mu}\right)^{2/p_0}  \left(\fint_{Q_0} \lvert g\rvert^{q_0^*} \D{\mu}\right)^{1/q_0^*} |Q_0|.
\end{align*}

Consider term \roman{2}.
Let $\mathscr{E} \coloneqq \{P\}_{P\in\mathscr{D}}$ be a covering of $E(Q_0)$ with maximal dyadic cubes. 
Then
\begin{align*}
  \int_{E(Q_0)} (Sf)^2 g \D{\mu} & = \sum_{P\in\mathscr{E}} \int_{P}   (Sf)^2 g \D{\mu} \\
                              & = \sum_{P\in\mathscr{E}} \int_{P} \int_0^{\ell(P)^2} \lvert \mathcal{Q}_t f(x)\rvert^2 \frac{\D{t}}t g \D{\mu} + \sum_{P\in\mathscr{E}} \int_{P} \int_{\ell(P)^2}^\infty \lvert \mathcal{Q}_t f(x)\rvert^2 \frac{\D{t}}t g \D{\mu} \\
                              & \eqqcolon \roman{2}_{<} +  \roman{2}_{>} .
\end{align*}
For each $P$ in the covering, we write $f = f_{\mathsf{in}} + f_{\mathsf{out}}$, where $f_{\mathsf{in}} \coloneqq f \1_{5P}$ and $f_{\mathsf{out}} \coloneqq f \1_{(5P)^\complement}$.
Then each term in $\roman{2}_{<}$ is itself decomposed into three terms
\begin{align}
  \int_{P} \int_0^{\ell(P)^2} \lvert \mathcal{Q}_t f(x)\rvert^2 \frac{\D{t}}t g \D{\mu}
  = & \int_{P} \int_0^{\ell(P)^2} \lvert \mathcal{Q}_t f_{\mathsf{in}} \rvert^2 g \frac{\D{t}}{t} \D{\mu} \label{eq:in_term} \tag{$\roman{2}_{\mathsf{in}}$} \\
  & +    \int_{P} \int_0^{\ell(P)^2} \lvert \mathcal{Q}_t f_{\mathsf{out}} \rvert^2 g \frac{\D{t}}{t} \D{\mu} \label{eq:out_term} \tag{$\roman{2}_{\mathsf{out}}$} \\
  & + 2 \int_{P} \int_0^{\ell(P)^2} (\mathcal{Q}_t f_{\mathsf{in}})( \mathcal{Q}_tf_{\mathsf{out}} ) g \frac{\D{t}}{t} \D{\mu} \label{eq:cross_term} \tag{$\roman{2}_{\mathsf{cross}}$} .
\end{align}

Term \eqref{eq:in_term} goes into the iteration.
Terms \eqref{eq:out_term} and \eqref{eq:cross_term} are controlled by using Fubini
and applying off-diagonal estimates
as in the following lemma.

\begin{lem}\label{lemma:corona}  
  For a given dyadic cube $P$,  
  let $S_k(P) \coloneqq 2^{k+1}P\setminus 2^{k}P$ for $k\ge 2$.  
  Then for any $t >0$,
  \begin{align}\label{eq:lemma_corona_in}
    \left( \fint_{P} \lvert \mathcal{Q}_t f_{\mathsf{in}} \rvert^{q_0} \D{\mu} \right)^{1/q_0}
    & \lesssim  \left(\frac{\ell(P)}{\sqrt{t}}\right)^\nu \left( \fint_{5P}  \lvert f \rvert^{p_0} \D{\mu} \right)^{1/p_0} \\
    \left( \fint_{P} \lvert \mathcal{Q}_t f_{\mathsf{out}} \rvert^{q_0} \D{\mu} \right)^{1/q_0} \label{eq:lemma_corona_out}
    & \lesssim \left(\frac{\ell(P)}{\sqrt{t}}\right)^{-\nu-2} \sum_{k\ge 2}  2^{-k} \left( \fint_{S_k(P)}  \lvert f \rvert^{p_0} \D{\mu} \right)^{1/p_0} .
  \end{align}
\end{lem}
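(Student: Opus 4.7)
The plan is to reduce both estimates to Assumption \ref{assum:S}(b) via the standard covering technique: cover the support of the relevant function by finitely overlapping balls of radius $\sqrt{t}$, apply the $(p_{0},q_{0})$ off-diagonal estimate on each piece, and sum. The one complication is that $P$ lives at scale $\ell(P)$, not $\sqrt{t}$, so the off-diagonal estimates must be upgraded to a larger target scale via \cref{rmk:off-diag_larger_scale}. Since both inequalities will only be applied for $t \in (0,\ell(P)^{2}]$ inside the integrals appearing in \eqref{eq:out_term} and \eqref{eq:cross_term}, we work under the regime $\sqrt{t} \leq \ell(P)$, in which $P$ is comparable to a ball $B_{P}$ of radius $\simeq \ell(P) \geq \sqrt{t}$ to which \cref{rmk:off-diag_larger_scale} applies and satisfies $|B_{P}| \simeq |P|$.

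For \eqref{eq:lemma_corona_in}, I would cover $5P$ by a finitely overlapping collection $\mathcal{B}$ of balls of radius $\sqrt{t}$, decompose $f_{\mathsf{in}} = \sum_{B \in \mathcal{B}} f_{\mathsf{in}} \mathbbm{1}_{B}$, and apply the triangle inequality. For each piece, the distance from $B$ to $B_{P}$ is $0$ (both lie in a bounded dilate of $P$), so $\rho$ is harmless and the upgraded off-diagonal estimate produces the bound $\big(\fint_{B}|f|^{p_{0}}\big)^{1/p_{0}}$. Summing over $B \in \mathcal{B}$ via \cref{lemma:counting_R_balls} with $\Omega = 5P$ and $m \simeq \ell(P)/\sqrt{t}$ (so that $mB \supset 5P$) yields the claimed prefactor $(\ell(P)/\sqrt{t})^{\nu}$.

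For \eqref{eq:lemma_corona_out}, I would split $f_{\mathsf{out}} = \sum_{k \geq 2} f \mathbbm{1}_{S_{k}(P)}$ and, for each $k$, cover $S_{k}(P)$ by a finitely overlapping collection $\mathcal{B}_{k}$ of balls of radius $\sqrt{t}$. For $B \in \mathcal{B}_{k}$ the geometry of $S_{k}(P)$ and the constraint $\sqrt{t} \leq \ell(P) \leq 2^{k}\ell(P)$ give $d(B, B_{P}) \gtrsim 2^{k}\ell(P)$. Assumption \ref{assum:S}(b), with $\rho(x) = (1 + x^{2})^{-(\nu+1)}$, then supplies the decay $(2^{k}\ell(P)/\sqrt{t})^{-2(\nu+1)}$ on each piece. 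Summing over $\mathcal{B}_{k}$ via \cref{lemma:counting_R_balls} with $m \simeq 2^{k}\ell(P)/\sqrt{t}$ contributes a factor $(2^{k}\ell(P)/\sqrt{t})^{\nu}$, so the $k$-th term is bounded by
\[
2^{-k(\nu+2)}\!\left(\frac{\ell(P)}{\sqrt{t}}\right)^{-\nu-2}\!\left(\fint_{S_{k}(P)}|f|^{p_{0}}\,\D{\mu}\right)^{1/p_{0}}.
\]
Since $\nu + 2 \geq 1$, summing over $k \geq 2$ with $2^{-k(\nu+2)} \leq 2^{-k}$ produces \eqref{eq:lemma_corona_out}.

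The only mildly delicate point is the bookkeeping of two competing scales: the off-diagonal decay operates at scale $\sqrt{t}$, while both the source annuli and the target $P$ are measured at the larger scale $\ell(P)$ (weighted geometrically by $2^{k}$ on the source side). Once the scale mismatch is bridged by \cref{rmk:off-diag_larger_scale} and \cref{lemma:counting_R_balls}, everything closes; the exponent $-(\nu+1)$ in Assumption \ref{assum:S}(b) has been calibrated so that, after paying the doubling price $m^{\nu}$ from the source summation, one still has surplus decay in both $2^{k}$ and $\ell(P)/\sqrt{t}$.
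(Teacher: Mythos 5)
Your proposal is correct and follows essentially the same argument as the paper: cover $5P$ (respectively each annulus $S_k(P)$) by finitely overlapping balls of radius $\sqrt{t}$, apply the $(p_0,q_0)$ off-diagonal estimates of Assumption \ref{assum:S}(b) upgraded to the larger target scale via \cref{rmk:off-diag_larger_scale}, and sum using \cref{lemma:counting_R_balls}, with the same exponent arithmetic $-2(\nu+1)+\nu=-\nu-2$ and $2^{-k(\nu+2)}\le 2^{-k}$. Your explicit remark that the estimates are only needed in the regime $\sqrt{t}\lesssim\ell(P)$ is a harmless (and slightly more careful) addition to what the paper writes.
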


\begin{proof}[Proof of \cref{lemma:corona}]
  The proof follows the one in \cite[Thm.~5.7]{bernicot2016sharp}.
  For $f_{\mathsf{in}} = f \1_{5P}$,
  let $\mathcal{R}_0$ be a collection of  finite overlapping balls $R$ of radius $\sqrt{t}$
  covering $5P$.
  By linearity of the operators, the triangle inequality, off-diagonal estimates for $\mathcal{Q}_t$
  with $\rho(x)= (1 + \lvert x\rvert^2)^{-(\nu + 1)}$
  and \cref{rmk:off-diag_larger_scale} we have
  \begin{align*}
    \left( \fint_{P} \lvert \mathcal{Q}_t f_{\mathsf{in}} \rvert^{q_0} \D{\mu} \right)^{1/q_0}
    & \le \sum_{R\in\mathcal{R}_0} \left( \fint_P  \lvert \mathcal{Q}_t f \1_{R} \rvert^{q_0} \D{\mu} \right)^{1/q_0} 
    \lesssim \sum_{R\in\mathcal{R}_0} \left( \fint_R  \lvert f \rvert^{p_0} \D{\mu} \right)^{1/p_0}.
  \end{align*}
  
  Since  $5 P \subseteq \frac{15 \ell(P)}{\sqrt{t}}R$, \cref{lemma:counting_R_balls}
  implies
  \begin{equation*}
    \sum_{R\in\mathcal{R}_0} \left( \fint_R  \lvert f \rvert^{p_0} \D{\mu} \right)^{1/p_0} \lesssim \left(\frac{5 \ell(P)}{\sqrt{t}}\right)^\nu \left( \fint_{5P} \lvert f\rvert^{p_0} \D{\mu} \right)^{1/p_0} 
  \end{equation*}
  which proves \eqref{eq:lemma_corona_in}.

  For $f_{\mathsf{out}} = f\1_{(5P)^\complement}$, decompose $f$ on the squared annuli $S_k = S_k(P)$.
  Let $\mathcal{R}_k$ be the covering of $S_k$ with finite overlapping balls $R$ of radius $\sqrt{t}$.
  Linearity of the operators $\mathcal{Q}_{t}$, the triangle inequality and off-diagonal estimates for $\mathcal{Q}_t$ imply that
  \begin{align*}
    \left( \fint_P  \lvert \mathcal{Q}_t f_{\mathsf{out}} \rvert^{q_0} \D{\mu} \right)^{1/q_0}
    & \le  \sum_{k\ge 2} \sum_{R\in\mathcal{R}_k} \left( \fint_P  \lvert \mathcal{Q}_t f \1_{R} \rvert^{q_0} \D{\mu} \right)^{1/q_0}  \\
    & \lesssim  \sum_{k\ge 2} \sum_{R\in\mathcal{R}_k} \rho\Big(\frac{d(P,R)}{\sqrt{t}}\Big) \left( \fint_R  \lvert f \rvert^{p_0} \D{\mu} \right)^{1/p_0} \\
    & \lesssim  \sum_{k\ge 2} \rho\Big(\frac{d(P,S_k)}{\sqrt{t}}\Big) \sum_{R\in\mathcal{R}_k} \left( \fint_R  \lvert f \rvert^{p_0} \D{\mu} \right)^{1/p_0} \\
    & \lesssim  \sum_{k\ge 2} \rho\Big(\frac{d(P,S_k)}{\sqrt{t}}\Big) \left(\frac{2^{k+1}\ell(P)}{\sqrt{t}}\right)^\nu  \left( \fint_{S_k}  \lvert f \rvert^{p_0} \D{\mu} \right)^{1/p_0} 
  \end{align*}  
  where we used that
  the function $\rho$ is monotone decreasing
  and $d(P,R) \ge d(P,S_k)$.
  The last inequality follows by applying \cref{lemma:counting_R_balls},
  since
  $S_k(P) \subseteq 2^{k} P \subseteq \frac{2^{k+1} \ell(P)}{\sqrt{t}}R$.
  
  Finally, we have enough decay from the remaining product, since
  \begin{equation*}%
    \rho\Big(\frac{d(P,S_k)}{\sqrt{t}}\Big)  \left(\frac{2^{k+1}\ell(P)}{\sqrt{t}}\right)^\nu \lesssim \Big( \frac{2^k \ell(P)}{\sqrt{t}}\Big)^{-\nu-2}
  \end{equation*}
  This follows because $ d(P,S_k) = d(P,2^{k+1}P\setminus 2^k P)$ is comparable with $2^k \ell(P)$
  and the function $\rho(x)= (1 + \lvert x\rvert^2)^{-(\nu + 1)}$ decays faster than $x^\nu$
  for $x \gg 1$.
  This proves estimate \eqref{eq:lemma_corona_out}.
\end{proof}

  We will use \cref{lemma:corona} to control the different terms left in the decomposition.
  \begin{rmk}\label{remark:control_geometric_sum}
    The geometric sum in  \eqref{eq:lemma_corona_out} is controlled using the stopping condition:
    the integral over $S_k$ is bounded by  the integral over the ball $2^{k+1}P$, so
    \begin{align*}
      \left( \sum_{k\ge 2} 2^{-k}\left( \fint_{S_k}  \lvert f \rvert^{p_0} \D{\mu} \right)^{1/p_0} \right)^2 
      & \lesssim \left( \sup_{k\ge 2} \left( \fint_{2^{k+1}P}  \lvert f \rvert^{p_0} \D{\mu} \right)^{1/p_0} \right)^2 \\
      & \lesssim \Big( \inf_{\substack{y\in P^a \\ P^a \text{ parent of } P}} \mathcal{M}_{p_0}f(y) \Big)^2 \\
      & \lesssim \eta^2 \left( \fint_{5Q_0} \lvert f \rvert^{p_0} \D{\mu} \right)^{2/p_0},
    \end{align*}
    where we used that $P$ is a maximal cube covering $E$.
    Similarly for the average on $5P$:
    \begin{align*}
      \left( \fint_{5P}  \lvert f \rvert^{p_0} \D{\mu} \right)^{2/p_0}
      \lesssim \Big( \inf_{y\in P^a} \mathcal{M}_{p_0}f(y) \Big)^2
      \lesssim  \eta^2 \left( \fint_{5Q_0}  \lvert f \rvert^{p_0} \D{\mu} \right)^{2/p_0} .
    \end{align*}
  \end{rmk}

  \begin{rmk}[Control on the $q_0^*$-average of $g$]
    As in \cite{bernicot2016sharp}
    the average of $g$ is controlled by the maximal function
    in a  similar fashion:
    \begin{equation*}
      \left( \fint_{P} \lvert g\rvert^{q_0^*} \D{\mu} \right)^{1/q_0^*} \le \inf_{x \in P} \mathcal{M}_{q_0^*} g(x) .
    \end{equation*}
    Summing over all cubes $P$ in $\mathscr{E}$ and using Kolmogorov's lemma \cite[Lem.~5.16]{Duo}
    we obtain
    \begin{equation}\label{eq:bound_on_g}
      \sum_{P}  \left( \fint_{P} \lvert g\rvert^{q_0^*} \D{\mu} \right)^{1/q_0^*} \abs{P} \le \int_{E(Q_0)} \mathcal{M}_{q_0^*} g(x) \D{\mu}
      \lesssim \abs{Q_0} \left( \fint_{5Q_0} \lvert g\rvert^{q_0^*} \D{\mu} \right)^{1/q_0^*} .
    \end{equation}
  \end{rmk}
  
  \subsection{Out term}
  Consider  \eqref{eq:out_term}.
  Applying Fubini and Hölder's inequality, we have
  \begin{equation*}
    \int_{P} \int_0^{\ell(P)^2} \lvert \mathcal{Q}_t f_{\mathsf{out}} \rvert^2 g \frac{\D{t}}{t} \D{\mu}
    \le \int_0^{\ell(P)^2}  \left( \fint_{P} \lvert \mathcal{Q}_t f_{\mathsf{out}} \rvert^{q_0} \D{\mu} \right)^{2/q_0} \frac{\D{t}}{t} \left(\fint_P \lvert g\rvert^{q_0^*} \D{\mu} \right)^{1/q_0^*} \abs{P}.
  \end{equation*}
  The average of $g$ is controlled as in \eqref{eq:bound_on_g}.
  Apply \cref{lemma:corona}
  to the first factor:
  \begin{align*}
    \int_0^{\ell(P)^2} \left( \fint_{P} \lvert \mathcal{Q}_t f_{\mathsf{out}}\rvert^{q_0} \D{\mu} \right)^{2/q_0}  \frac{\D{t}}t 
    & \lesssim  \int_0^{\ell(P)^2} \Bigg( \sum_{k\ge 2}  \frac{\sqrt{t}}{2^k \ell(P)} \left( \fint_{S_k}  \lvert f \rvert^{p_0} \D{\mu} \right)^{1/p_0} \Bigg)^2 \frac{\D{t}}t \\
    & \lesssim \Bigg( \sum_{k\ge 2} 2^{-k} \left( \fint_{S_k}  \lvert f \rvert^{p_0} \D{\mu} \right)^{1/p_0} \Bigg)^2
  \end{align*}
  which is controlled as in \cref{remark:control_geometric_sum}. This case is concluded.

  \subsection{Cross term}
  Consider \eqref{eq:cross_term}.
  We exchange the integrals, then an application of Hölder's and Cauchy--Schwarz inequality give
  \begin{align*}
    \int_{P} \int_0^{\ell(P)^2} & (\mathcal{Q}_t f_{\mathsf{in}})( \mathcal{Q}_tf_{\mathsf{out}} ) g \frac{\D{t}}{t} \D{\mu} \\
                             & \le \int_0^{\ell(P)^2} \left( \fint_{P}  \lvert (\mathcal{Q}_t f_{\mathsf{in}})( \mathcal{Q}_tf_{\mathsf{out}} ) \rvert^{q_0/2} \D{\mu} \right)^{2/q_0} \frac{\D{t}}{t}  \left( \fint_P \lvert g\rvert^{q_0^*} \D{\mu} \right)^{1/q_0^*}  \abs{P} \\
                             & \le \int_0^{\ell(P)^2} \left( \fint_{P}  \lvert \mathcal{Q}_t f_{\mathsf{in}}\rvert^{q_0} \D{\mu} \right)^{1/q_0} \left(\fint_P \lvert \mathcal{Q}_tf_{\mathsf{out}} \rvert^{q_0} \D{\mu} \right)^{1/q_0}  \frac{\D{t}}{t}
                               \left( \fint_P \abs{g}^{q_0^{*}} \D{\mu} \right)^{1/q_0^{*}} \abs{P}.
  \end{align*}
  For the first factor with $f_{\mathsf{in}}$, off-diagonal estimates as in \cref{lemma:corona} imply
  \begin{equation}\label{eq:cross_term_in}
    \left( \fint_{P}  \lvert \mathcal{Q}_t f_{\mathsf{in}}\rvert^{q_0} \D{\mu} \right)^{1/q_0} \lesssim \left( \frac{\ell(P)}{\sqrt{t}} \right)^{\nu} \left( \fint_{5P} \lvert f\rvert^{p_0} \D{\mu} \right)^{1/p_0} .
  \end{equation}
  For the second factor with $f_{\mathsf{out}}$, \cref{lemma:corona} implies
  \begin{equation}\label{eq:cross_term_out}
    \left(\fint_P \lvert \mathcal{Q}_t f_{\mathsf{out}} \rvert^{q_0} \D{\mu} \right)^{1/q_0} \lesssim \left(\frac{\ell(P)}{\sqrt{t}}\right)^{-\nu-2} \sum_{k\ge 2}  2^{-k} \left( \fint_{S_k(P)}  \lvert f \rvert^{p_0} \D{\mu} \right)^{1/p_0} .
  \end{equation}
  
  Combining \eqref{eq:cross_term_in} and \eqref{eq:cross_term_out} gives
  \begin{align*}
    \int_0^{\ell(P)^2} & \left( \fint_{P}  \lvert \mathcal{Q}_t f_{\mathsf{in}}\rvert^{q_0} \D{\mu} \right)^{1/q_0} \left(\fint_P \lvert \mathcal{Q}_tf_{\mathsf{out}} \rvert^{q_0} \D{\mu} \right)^{1/q_0}  \frac{\D{t}}{t} \\
                     & \lesssim \int_0^{\ell(P)^2}  \left(\frac{\sqrt{t}}{\ell(P)}\right)^{2}  \frac{\D{t}}{t} \left( \fint_{5P} \lvert f\rvert^{p_0} \D{\mu} \right)^{1/p_0}  \sum_{k\ge 2}  2^{-k} \left( \fint_{S_k(P)}  \lvert f \rvert^{p_0} \D{\mu} \right)^{1/p_0} \\
                     & \lesssim \eta^2 \left( \fint_{5Q_0} \lvert f\rvert^{p_0} \D{\mu} \right)^{2/p_0}
  \end{align*}
  where the last estimate 
  follows as in \cref{remark:control_geometric_sum}.

\subsection{Large scales}
Consider $\roman{2}_{>}$. %
Let $P^a$ be the dyadic parent of $P$, so that $\ell(P^a) = 2 \ell(P)$. Then
\begin{align}\label{eq:large_splitting}
  \int_{P} & \int_{\ell(P)^2}^\infty \lvert \mathcal{Q}_t f(x)\rvert^2 \frac{\D{t}}t g \D{\mu}  \nonumber \\
  = &
  \int_{P} \int_{\ell(P)^2}^{\ell(P^a)^2} \lvert \mathcal{Q}_t f(x)\rvert^2 \frac{\D{t}}t g \D{\mu}
  + \int_{P} \int_{\ell(P^a)^2}^\infty \lvert \mathcal{Q}_t f(x)\rvert^2 \frac{\D{t}}t g \D{\mu} .
\end{align}

In the first term, we exchange the integrals
and apply Hölder's inequality %
\begin{align*}
  \int_{\ell(P)^2}^{\ell(P^a)^2} \int_{P} \lvert \mathcal{Q}_t f(x)\rvert^2  g \D{\mu} \frac{\D{t}}t  
  & \le \int_{\ell(P)^2}^{\ell(P^a)^2}  \left( \fint_{P} \lvert \mathcal{Q}_t f(x)\rvert^{q_0} \D{\mu} \right)^{2/q_0}  \frac{\D{t}}t
    \left( \fint_{P} \lvert g\rvert^{q_0^*} \D{\mu} \right)^{1/q_0^*} \abs{P} .
\end{align*}

Applying \cref{lemma:corona} and
using that $\sqrt{t}$ is comparable with $\ell(P)$, we obtain
\begin{align*}
  \int_{\ell(P)^2}^{\ell(P^a)^2} & \left( \fint_{P} \lvert \mathcal{Q}_t f\rvert^{q_0} \D{\mu} \right)^{2/q_0}  \frac{\D{t}}t \\
                              & \lesssim  \int_{\ell(P)^2}^{\ell(P^a)^2} \left( \left(\frac{\ell(P)}{\sqrt{t}}\right)^\nu \left( \fint_{5P}  \lvert f \rvert^{p_0} \D{\mu} \right)^{1/p_0}
                                + \sum_{k\ge 2}  \frac{\sqrt{t}}{2^k \ell(P)} \left( \fint_{S_k}  \lvert f \rvert^{p_0} \D{\mu} \right)^{1/p_0}\right)^2 \frac{\D{t}}t \\
                              & \lesssim \left( \left( \fint_{5P}  \lvert f \rvert^{p_0} \D{\mu} \right)^{1/p_0} + \sum_{k\ge 2} 2^{-k} \left( \fint_{S_k}  \lvert f \rvert^{p_0} \D{\mu} \right)^{1/p_0} \right)^2,
\end{align*}
which again is controlled as in \cref{remark:control_geometric_sum}.
The average of $g$ is estimated as in \eqref{eq:bound_on_g}.

The second term in \eqref{eq:large_splitting},
after applying Hölder's inequality, is controlled by the maximal truncation 
\begin{align*}
  \int_{P} \int_{\ell(P^a)^2}^\infty \lvert \mathcal{Q}_t f(x)\rvert^2 \frac{\D{t}}t g \D{\mu} & \le
  \left( \fint_{P} \Big( \int_{\ell(P^a)^2}^\infty \lvert \mathcal{Q}_t f(x)\rvert^2 \frac{\D{t}}t \Big)^{q_0/2} \D{\mu} \right)^{2/q_0}
  \left( \fint_{P} \lvert g\rvert^{q_0^*} \D{\mu} \right)^{1/q_0^*} \abs{P} \\
  & \lesssim \inf_{x \in P^a} (S_{Q_0}^{*} f)^2(x) \left( \fint_{P} \lvert g\rvert^{q_0^*} \D{\mu} \right)^{1/q_0^*} \abs{P} \\
  & \lesssim \eta^2 \left(\fint_{5Q_0}\lvert f\rvert^{p_0} \D{\mu}\right)^{2/p_0} \left( \fint_{P} \lvert g\rvert^{q_0^*} \D{\mu} \right)^{1/q_0^*} \abs{P} .
\end{align*}

We have shown that
\begin{align*}
  \int_{Q_0} \int_0^{\infty} \lvert \mathcal{Q}_t f \rvert^{2} g \frac{\D{t}}{t} \D{\mu} \lesssim \eta^2 & \left( \fint_{5Q_0} \abs{f}^{p_0} \D{\mu} \right)^{2/p_0} \left( \fint_{5Q_0} \abs{g}^{q_0^{*}} \D{\mu} \right)^{1/q_0^{*}} \abs{Q_0} \\
  & + \sum_{P} \int_{P} \int_0^{\ell(P)^2} \lvert \mathcal{Q}_t f\1_{5P}\rvert^2 \frac{\D{t}}t g \D{\mu} .
\end{align*}

Let $\mathcal{S} = \{Q_0\}$.
We add all $P$ in the sum to $\mathcal{S}$
and we repeat the argument on each term in the sum.
This iteration gives the desired bound: a sum of averages of $f$ and $g$
on cubes in the collection $\mathcal{S}$.
We can choose $\eta>0$ such that
$\lvert E(Q)\rvert \le \frac12 \lvert Q\rvert$ for each $Q\in\mathcal{S}$.
Then $\mathcal{S}$ is sparse since each $Q\in\mathcal{S}$
has a subset $F_Q \coloneqq Q \setminus E(Q)$ with the property that
$\{ F_Q \}_{Q\in\mathcal{S}}$ is a disjoint family
and $\lvert F_Q\rvert > \frac12 \lvert Q\rvert$ by construction. \qed

\section{Weighted Boundedness}
\label{sec:Weighted}

In this section, we provide the proof of \cref{thm:weight_bounds}.
We begin by recalling the notation $p^{*} := (p/2)' = \frac{p}{p - 2}$
for $p > 2$.
We will also make use of the notation
$$
  \phi(p) := \br{\frac{q_{0}}{p}}' \br{\frac{p}{p_{0}} - 1} + 1
  $$
for $1 \leq p_{0} < 2 < q_{0} \leq \infty$ and $p \in
  (p_{0},q_{0})$, which was previously introduced in \cref{subsec:weight_classes}.

  \begin{rmk}
    \label{rmk:CriticalExponent}
  Define the critical index $\mathfrak{p}$ through
  \begin{equation}
    \label{eqtn:MFrakP}
  \mathfrak{p} := 2 + p_{0} - \frac{2 p_{0}}{q_{0}} = 2 + \frac{p_{0}}{q_{0}^{*}}.
\end{equation}
The critical exponent is the unique $p \in (1,\infty)$ that satisfies the
relation $p^{*} = \phi(p)$.
  It is easy to check that $\mathfrak{p}$ is contained in the interval
  $(2,q_{0})$ and that it satisfies the relation
  \begin{equation}
    \label{eqtn:Critical}
    \frac{1}{\mathfrak{p} - p_{0}} = \br{\frac{q_{0}}{\mathfrak{p}}}'
    \frac{1}{2 q_{0}^{*}}.
  \end{equation}
 Thus, we also have that
$$
\gamma(p) = \max \br{\frac{1}{p - p_{0}},\br{\frac{q_{0}}{p}}'
  \frac{1}{2 q_{0}^{*}}} = \br{\frac{q_{0}}{p}}' \frac{1}{2 q_{0}^{*}}
$$
if and only if $p \geq \mathfrak{p}$, and $\gamma(p) = (p -
p_{0})^{-1}$ if and only if $p \leq \mathfrak{p}$.
  In \cite{bernicot2016sharp},
  the critical exponent for 
  the linear sparse domination
  is $1 + p_0/q_0'$ which is evidently analogous to
  the definition of $\mathfrak{p}$ in \eqref{eqtn:MFrakP}.
\end{rmk}

\subsection{Proof of Theorem \ref{thm:weight_bounds}}
Fix $p \in (2,q_{0})$.  Notice that by \eqref{eqtn:WeightProperty},
$$
\brs{w^{(\frac{q_{0}}{p})'}}_{A_{\phi(p)}} \leq
\br{\brs{w}_{A_{\frac{p}{p_{0}}}} \cdot \brs{w}_{RH_{(\frac{q_{0}}{p})'}}}^{(\frac{q_{0}}{p})'}.
$$
This tells us that in order to prove estimate \eqref{e.secondmain}, it
is sufficient to demonstrate the stronger estimate
\begin{equation}
  \begin{aligned}
    \label{e.strongsecondmain}
    \sum_{P \in \mathcal{S}} \left(\dashint_{5 P}
      \abs{f}^{p_0}\D{\mu}\right)^{2/p_0}  \left(\dashint_{5 P}  \abs{g}^{q_{0}^{*}} \D{\mu} \right)^{1/q_{0}^{*}} \abs{P}
    \leq C_0 \brs{w^{(\frac{q_{0}}{p})'}}_{A_{\phi(p)}}^{\frac{2 \gamma(p)}{(q_{0}/p)'}} \norm{f}_{L^{p}(w)}^{2} \norm{g}_{L^{p^{*}}(\sigma)}.
    \end{aligned}
  \end{equation}
By Theorem \ref{thm:Dyadic}, for each $P \in
\mathcal{S}$ there will exist $\bar{P} \in \mathscr{D}$ for which $5 P \subset \bar{P}$ and $\abs{\bar{P}} \lesssim \abs{5
P}$. Then $\abs{\bar{P}} \lesssim \abs{P}$ by the doubling property of
dyadic cubes. As the collection $\mathcal{S}$ is sparse, there must
exist a collection of disjoint sets $\lb E_{P} \rb_{P \in \mathcal{S}}$
such that $E_{P} \subset P$ and $\abs{P} \lesssim \abs{E_{P}}$ for all
$P \in \mathcal{S}$. We therefore have
$$
\abs{\bar{P}} \lesssim \abs{E_{P}}.
$$
Define the weight $v := w^{(q_{0}/p)'}$ and $r :=
\phi(p) =
\br{\frac{q_{0}}{p}}' \br{\frac{p}{p_{0}} - 1} +
1$. Set $u$ to be the dual weight of $v$ in $A_{r}$, $u := v^{1 -
  r'}$.
 We have
\begin{equation*}
 \br{\dashint_{\bar{P}} \abs{f}^{p_{0}} \D{\mu}}^{\frac{2}{p_{0}}}= \br{\frac{1}{u(\bar{P})} \int_{\bar{P}} \abs{f u^{-\frac{1}{p_{0}}}}^{p_{0}} u
 \D{\mu}}^{\frac{2}{p_{0}}} \br{\dashint_{\bar{P}} u \D{\mu}}^{\frac{2}{p_{0}}}
\end{equation*}
and
\begin{equation*}
 \br{
   \dashint_{\bar{P}} \abs{g}^{q_{0}^{*}} \D{\mu}}^{\frac{1}{q_{0}^{*}}} \\
 =\br{\frac{1}{v(\bar{P})} \int_{\bar{P}} \abs{g
     v^{-\frac{1}{q_{0}^{*}}}}^{q_{0}^{*}} v \D{\mu}}^{\frac{1}{q_{0}^{*}}} \br{\dashint_{\bar{P}}
   v \D{\mu}}^{\frac{1}{q_{0}^{*}}}. 
\end{equation*}
Applying these two relations to our sparse form leads to
\begin{equation}
  \begin{aligned}
    \sum_{P \in \mathcal{S}} \left(\dashint_{5 P} \abs{f}^{p_0}
      \D{\mu}\right)^{2/p_0} & \left(\dashint_{5 P}
      \abs{g}^{q_{0}^{*}} \D{\mu}\right)^{1/q_{0}^{*}} \abs{P} \\
    &\lesssim \sum_{P \in \mathcal{S}} \br{\dashint_{\bar{P}}
      \abs{f}^{p_{0}} \D{\mu}}^{\frac{2}{p_{0}}}
    \br{\dashint_{\bar{P}} \abs{g}^{q_{0}^{*}} \,
      d\mu}^{\frac{1}{q_{0}^{*}}} \abs{P} \\
    &= \sum_{P \in \mathcal{S}} \br{\frac{1}{u(\bar{P})} \int_{\bar{P}} \abs{f u^{-\frac{1}{p_{0}}}}^{p_{0}} u \D{\mu}}^{\frac{2}{p_{0}}}
    \br{\frac{1}{v(\bar{P})} \int_{\bar{P}} \abs{g v^{-\frac{1}{q_{0}^{*}}}}^{q_{0}^{*}} v \D{\mu}}^{\frac{1}{q_{0}^{*}}} \\
    & \qquad \qquad \qquad \cdot \br{\dashint_{\bar{P}} v \D{\mu}}^{\frac{1}{q_{0}^{*}}}  \br{\dashint_{\bar{P}} u \D{\mu}}^{\frac{2}{p_{0}}} \abs{P}.                              
  \end{aligned}\label{eqtn:Bounded2.5}
\end{equation}

\textit{Case 1: $p \geq \mathfrak{p}$.} Note that by Remark
\ref{rmk:CriticalExponent} this assumption is
equivalent to assuming that $\gamma(p) = \br{\frac{q_{0}}{p}}'
\frac{1}{2 q_{0}^{*}}$. If we define
$$
\kappa(p) := \frac{2}{p_{0}} - \frac{r - 1}{q_{0}^{*}},
$$
then our assumption is also equivalent to the condition $\kappa(p)
\leq 0$.
The fact that $u$ is the conjugate weight of $v$ in $A_{r}$
implies that for $P \in \mathcal{S}$,
\begin{align*}\begin{split}  
   \br{\dashint_{\bar{P}} v \D{\mu}}^{\frac{1}{q_{0}^{*}}} \br{\dashint_{\bar{P}} u \D{\mu}}^{\frac{2}{p_{0}}}
  &= \br{\dashint_{\bar{P}} v \D{\mu}}^{\frac{1}{q_{0}^{*}}}
    \br{\dashint_{\bar{P}} u \D{\mu}}^{\frac{r - 1}{q_{0}^{*}}}
    \br{\dashint_{\bar{P}} u \D{\mu}}^{\kappa(p)} \\
  &\le \brs{v}_{A_{r}}^{1/q_{0}^{*}}\br{\dashint_{\bar{P}} u \D{\mu}}^{\kappa(p)}.   
 \end{split}\end{align*}
This estimate can be applied to \eqref{eqtn:Bounded2.5} to produce
\begin{equation}\begin{aligned}
   \label{eqtn:Bounded2.51}
  \sum_{P \in \mathcal{S}}& \left(\dashint_{5 P} \abs{f}^{p_0}
    \D{\mu}\right)^{2/p_0}   \left(\dashint_{5 P} \abs{g}^{q_{0}^{*}} \D{\mu} \right)^{1/q_{0}^{*}}  \lvert P \rvert \\
  & \lesssim \brs{v}_{A_{r}}^{\frac{1}{q_{0}^{*}}} \sum_{P \in \mathcal{S}} \br{\frac{1}{u(\bar{P})} \int_{\bar{P}} \abs{f u^{-\frac{1}{p_{0}}}}^{p_{0}} u
    \D{\mu}}^{\frac{2}{p_{0}}} \br{\frac{1}{v(\bar{P})} \int_{\bar{P}} \abs{g
      v^{-\frac{1}{q_{0}^{*}}}}^{q_{0}^{*}} v
    \D{\mu}}^{\frac{1}{q_{0}^{*}}} \br{\dashint_{\bar{P}} u
    \D{\mu}}^{\kappa(p)} \abs{P}.
\end{aligned}
\end{equation}
Since $\abs{\bar{P}} \lesssim \abs{E_{P}}$ and $\kappa(p) \leq 0$,
\begin{align*}
  \br{\dashint_{\bar{P}} u \D{\mu}}^{\kappa(p)} \abs{P}
  &\lesssim \br{\dashint_{E_{P}} u \D{\mu}}^{\kappa(p)} \abs{E_{P}} \\
  &=  u(E_{P})^{\kappa(p)} \abs{E_{P}}^{1 - \kappa(p)} .
\end{align*}
For $\lambda := (1 - \kappa(p))^{-1}$ notice that
\begin{equation*}
  \frac{\lambda}{p/2} + \frac{\lambda}{p^{*}} - \lambda \kappa(p) =
  \lambda(1 - \kappa(p)) = 1.
\end{equation*}
Also, it is straightforward to check by substituting in the definition
$u \coloneqq v^{1 - r'}$ that the constant function $1$ can be decomposed as
$$
1 = u^{\frac{1}{p/2}} v^{\frac{1}{p^{*}}} u^{- \kappa(p)}.
$$
From this, H\"{o}lder's inequality implies
\begin{align*}\begin{split}  
 \abs{E_{P}} &= \int_{E_{P}} u^{\frac{\lambda}{p/2}}
 v^{\frac{\lambda}{p^{*}}} u^{- \lambda \kappa(p)} \D{\mu} \\ &\leq \br{\int_{E_{P}} u}^{\frac{\lambda}{p/2}}
 \br{\int_{E_{P}} v}^{\frac{\lambda}{p^{*}}} \br{\int_{E_{P}} u}^{-\lambda \kappa(p)},
\end{split}\end{align*}
and therefore, raising to the power $1/\lambda$ produces
\begin{equation*}
  u(E_P)^{\kappa(p)} \abs{E_P}^{1/\lambda} \le  u(E_P)^{2/p} v(E_P)^{1/p^*}.
\end{equation*}

Applying this estimate to \eqref{eqtn:Bounded2.51}
and H\"{o}lder's inequality leads to
\begin{align*}
  \sum_{P \in \mathcal{S}}& \left(\dashint_{5 P} \abs{f}^{p_0}
                            \D{\mu}\right)^{2/p_0}   \left(\dashint_{5
                            P} \abs{g}^{q_{0}^{*}} \D{\mu} \right)^{1/q_{0}^{*}}  \lvert P \rvert \\
  & \lesssim \brs{v}_{A_{r}}^{\frac{1}{q_{0}^{*}}} \sum_{P \in \mathcal{S}} \br{\frac{1}{u(\bar{P})} \int_{\bar{P}} \abs{f u^{-\frac{1}{p_{0}}}}^{p_{0}} u
    \D{\mu}}^{\frac{2}{p_{0}}} \br{\frac{1}{v(\bar{P})} \int_{\bar{P}} \abs{g
      v^{-\frac{1}{q_{0}^{*}}}}^{q_{0}^{*}} v
    \D{\mu}}^{\frac{1}{q_{0}^{*}}} u(E_{P})^{\frac{2}{p}} v(E_{P})^{\frac{1}{p^{*}}} \\
  &\leq \brs{v}_{A_{r}}^{\frac{1}{q_{0}^{*}}} \sum_{P \in \mathcal{S}}
    \br{\int_{E_{P}} \dyadicM_{p_{0},u} (f u^{-\frac{1}{p_{0}}})^{p} u \D{\mu}}^{\frac{2}{p}} \br{\int_{E_{P}} \dyadicM_{q_{0}^{*},v}(g
    v^{-\frac{1}{q_{0}^{*}}})^{p^{*}} v \D{\mu}}^{\frac{1}{p^{*}}} \\
  & \le \brs{v}_{A_{r}}^{\frac{1}{q_{0}^{*}}} \br{\sum_{P \in
    \mathcal{S}} \int_{E_{P}} \dyadicM_{p_{0},u}(f
    u^{-\frac{1}{p_{0}}})^{p} u \D{\mu}}^{\frac{2}{p}} \br{\sum_{P \in
    \mathcal{S}} \int_{E_{P}} \dyadicM_{q_{0}^{*},v}(g
    v^{-\frac{1}{q_{0}^{*}}})^{p^{*}} v \D{\mu}}^{\frac{1}{p^{*}}} \\
  &\leq \brs{v}_{A_{r}}^{\frac{1}{q_{0}^{*}}} \br{ \int \dyadicM_{p_{0},u}(f
    u^{-\frac{1}{p_{0}}})^{p} u \D{\mu}}^{\frac{2}{p}} \br{\int \dyadicM_{q_{0}^{*},v}(g
    v^{-\frac{1}{q_{0}^{*}}})^{p^{*}} v \D{\mu}}^{\frac{1}{p^{*}}}.
\end{align*}

    Since $p > p_{0}$ the operator $\dyadicM_{p_0,u}$ is bounded on
    $L^{p}(u \D{\mu})$ with constant independent of $u$. Similarly, since $p^{*} >
    q_{0}^{*}$ the operator $\dyadicM_{q_{0}^{*},v}$ is bounded on $L^{p^{*}}(v
    \D{\mu})$ with constant independent of $v$. These observations
     lead to the estimate
    \begin{equation*}
     \sum_{P \in \mathcal{S}} \left(\dashint_{5 P}
       \abs{f}^{p_0}\D{\mu}\right)^{2/p_0}  \left(\dashint_{5 P}  \abs{g}^{q_{0}^{*}}\D{\mu}\right)^{1/q_{0}^{*}} \abs{P} \lesssim \brs{v}_{A_{r}}^{\frac{1}{q_{0}^{*}}}
      \br{\int \abs{f}^{p} u^{1 -
          \frac{p}{p_{0}}} \D{\mu}}^{\frac{2}{p}}
      \br{\int \abs{g}^{p^{*}} v^{1 -
          \frac{p^{*}}{q_{0}^{*}}} \D{\mu}}.
    \end{equation*}
    From this estimate and the relation $\frac{1}{q_{0}^{*}} = \frac{2
    \gamma(p)}{(q_{0}/p)'}$ it is clear that \eqref{e.strongsecondmain} will
    follow if it can be shown that $u^{1 -
      \frac{p}{p_{0}}} = w$ and $v^{1 -
      \frac{p^{*}}{q_{0}^{*}}} = \sigma$. Let's first prove
    that $u^{1 - \frac{p}{p_{0}}} = w$. As $u$ is defined
    through $u = v^{1 - r'} = w^{\br{\frac{q_{0}}{p}}'(1 - r')}$ we have
    \begin{equation*}
      u^{1 - \frac{p}{p_{0}}} = w^{\br{\frac{q_0}{p}}'(1
        - r') \br{1 - \frac{p}{p_{0}}}}
      = w^{(1 - r')(1-r)} = w,
    \end{equation*}
    where the second equality follows from the definition $r = \phi(p)$.

It remains to show that $v^{1 -
      \frac{p^{*}}{q_{0}^{*}}} = \sigma$. The
    definitions $v = w^{\br{\frac{q_{0}}{p}}'}$ and $\sigma
    = w^{1 - p^{*}}$ transform this relation into
    $$
w^{\br{\frac{q_0}{p}}' \br{1 -
    \frac{p^{*}}{q_{0}^{*}}}} = w^{1 - p^{*}}.
$$
It must therefore be proved that
\begin{equation*}
  \br{\frac{q_0}{p}}' \br{1 -
    \frac{p^{*}}{q_{0}^{*}}} = 1 - p^{*}.
\end{equation*}
This is equivalent to showing that
$$
 \br{\frac{q_{0}}{q_{0} -
    p}} \br{1 - \frac{p(q_{0} - 2)}{q_{0}(p - 2)}} = 1 - \frac{p}{p - 2}.
$$
Through simple algebraic manipulation, it is easy to check that the
two sides of the above equality indeed coincide. This validates the relation $v^{1 -
  \frac{p^{*}}{q_{0}^{*}}} = \sigma$ and completes our
proof for $p \geq \mathfrak{p}$.

\vspace*{0.2in}

\textit{Case 2: $p \leq \mathfrak{p}$}. This assumption is equivalent to assuming
that $\gamma(p) = \frac{1}{p - p_{0}}$ or, alternatively, $\kappa(p) \geq 0$. Define
$$
\bar{\kappa}(p) := \frac{1}{q_{0}^{*}} - \frac{2}{p_{0}(r - 1)}.
$$
Then
\begin{align*}
  \br{\dashint_{\bar{P}} v \D{\mu}}^{\frac{1}{q_{0}^{*}}} \br{\dashint_{\bar{P}} u \D{\mu}}^{\frac{2}{p_{0}}}
  &= \br{\dashint_{\bar{P}} v \D{\mu}}^{\frac{2}{p_{0}(r - 1)}}
    \br{\dashint_{\bar{P}} u \D{\mu}}^{\frac{2}{p_{0}}} \br{\dashint_{\bar{P}} v \D{\mu}}^{\bar{\kappa}(p)}
  \\
  &\leq \brs{v}_{A_{r}}^{\frac{2}{p_{0}(r - 1)}} \br{\dashint_{\bar{P}} v \D{\mu}}^{\bar{\kappa}(p)}.
\end{align*}
Combining this with \eqref{eqtn:Bounded2.5} gives
\begin{equation}
  \begin{aligned}
    \label{eqtn:Bounded6}
    \sum_{P \in \mathcal{S}} &\br{\dashint_{5 P} \abs{f}^{p_{0}} \D{\mu}}^{\frac{2}{p_{0}}} \br{\dashint_{5 P} \abs{g}^{q_{0}^{*}} \D{\mu}}^{\frac{1}{q_{0}^{*}}} \abs{P} \\
    &\lesssim \brs{v}_{A_{r}}^{\frac{2}{p_{0}(r - 1)}} \sum_{P \in \mathcal{S}} \br{\frac{1}{u(\bar{P})} \int_{\bar{P}} \abs{f
        u^{-\frac{1}{p_{0}}}}^{p_{0}} u \D{\mu}}^{\frac{2}{p_{0}}}
    \br{\frac{1}{v(\bar{P})} \int_{\bar{P}} \abs{g
        v^{-\frac{1}{q_{0}^{*}}}}^{q_{0}^{*}} v \D{\mu}}^{\frac{1}{q_{0}^{*}}} \br{\dashint_{\bar{P}} v \D{\mu}}^{\bar{\kappa}(p)} \abs{P}.
    \end{aligned}
  \end{equation}
It is clear that $\bar{\kappa}(p) = - (r - 1)^{-1}\kappa(p) \leq 0$. It then
follows from this and $\abs{\bar{P}} \lesssim \abs{E_{P}}$ that
\begin{align*}
  \br{\dashint_{\bar{P}} v \D{\mu}}^{\bar{\kappa}(p)} \abs{P}
  & \lesssim \br{\dashint_{E_{P}} v \D{\mu}}^{\bar{\kappa}(p)} \abs{E_{P}} \\
  & = v(E_{P})^{\bar{\kappa}(p)} \abs{E_{P}}^{1 - \bar{\kappa}(p)} .
\end{align*}

Define $\bar{\lambda} := (1 - \bar{\kappa}(p))^{-1}$. Then we have
$$
\frac{\bar{\lambda}}{p/2} + \frac{\bar{\lambda}}{p^{*}} -
\bar{\lambda} \bar{\kappa}(p) = \bar{\lambda}(1 - \bar{\kappa}(p)) = 1.
$$
Also, it is straightforward to check by substituting in the definition
$u = v^{1 - r'}$ that the constant function $1$ can be decomposed as
$$
1 = u^{\frac{1}{p/2}} v^{\frac{1}{p^{*}}} v^{- \bar{\kappa}(p)}.
$$
H\"{o}lder's inequality then implies
\begin{align*}\begin{split}  
 \abs{E_{P}} &= \int_{E_{P}} u^{\frac{\bar{\lambda}}{p/2}}
 v^{\frac{\bar{\lambda}}{p^{*}}} v^{- \bar{\lambda} \bar{\kappa}(p)}
 \D{\mu} \\
 &\leq \br{\int_{E_{P}} u}^{\frac{\bar{\lambda}}{p/2}}
 \br{\int_{E_{P}} v}^{\frac{\bar{\lambda}}{p^{*}}} \br{\int_{E_{P}}
   v}^{- \bar{\lambda} \bar{\kappa}(p)}.
\end{split}\end{align*}
Raising to the power $1/\bar{\lambda}$ produces
\begin{equation*}
  v(E_P)^{\bar{\kappa}(p)} \abs{E_P}^{1/\bar{\lambda}} \le   u(E_P)^{2/p} v(E_P)^{1/p^*}.
\end{equation*}
Applying this to \eqref{eqtn:Bounded6}, since $1/\bar{\lambda} = 1 - \bar{\kappa}(p)$, yields
\begin{align*}\begin{split}  
    \sum_{P \in \mathcal{S}} &\br{\dashint_{5 P} \abs{f}^{p_{0}} \D{\mu}}^{\frac{2}{p_{0}}}
    \br{\dashint_{5 P} \abs{g}^{q_{0}^{*}} \D{\mu}}^{\frac{1}{q_{0}^{*}}} \abs{P} \\ & \lesssim
 \brs{v}_{A_{r}}^{\frac{2}{p_{0}(r - 1)}} \sum_{P \in \mathcal{S}}
 \br{\frac{1}{u(\bar{P})} \int_{\bar{P}} \abs{f u^{-\frac{1}{p_{0}}}}^{p_{0}} u \D{\mu}}^{\frac{2}{p_{0}}} \br{\frac{1}{v(\bar{P})} \int_{\bar{P}} \abs{g
   v^{-\frac{1}{q_{0}^{*}}}}^{q_{0}^{*}} v \D{\mu}}^{\frac{1}{q_{0}^{*}}} u(E_{P})^{\frac{2}{p}} v(E_{P})^{\frac{1}{p^{*}}}.
\end{split}\end{align*}
After noting that $\frac{2}{p_{0}(r - 1)} = \frac{2 \gamma(p)}{(q_{0}/p)'}$
the proof of \eqref{e.strongsecondmain} then proceeds in an identical manner to the case $p \geq \mathfrak{p}$.
 \qed

\vspace*{0.2in}

\subsection{Proof of Corollary \ref{cor:Weighted}}
We start by noting that
\begin{align*}\begin{split}  
 \norm{S f}_{L^{\mathfrak{p}}(w)}^{2} &= \norm{(S
   f)^{2}}_{L^{\frac{\mathfrak{p}}{2}}(w)} \\
 &= \sup_{\norm{g}_{L^{\mathfrak{p}^{*}}(\sigma)} = 1}
 \abs{\langle (Sf)^{2}, g \rangle},
\end{split}\end{align*}
where $\sigma := w^{1 - \br{\frac{\mathfrak{p}}{2}}'} = w^{1 - \mathfrak{p}^{*}}$ is the
$A_{\frac{\mathfrak{p}}{2}}$-conjugate weight of $w$. Thus, in order
to prove the desired result, it is sufficient to demonstrate the
estimate
\begin{equation}
  \label{eqtn:Bounded1}
  \abs{\langle (S f)^{2}, g \rangle} \lesssim  \brs{w^{(q_{0}/\mathfrak{p})'}}_{A_{\phi(\mathfrak{p})}}^{\frac{1}{q_{0}^{*}}}
  \norm{f}_{L^{\mathfrak{p}}(w)}^{2} \norm{g}_{L^{\mathfrak{p}}(\sigma)}.
\end{equation}
For the critical index $\mathfrak{p}$ this is an easy consequence of
Theorem \ref{t.main}, estimate \eqref{e.strongsecondmain} and a density argument.

Applying the sharp restricted range extrapolation (\cref{thm:Extrapolation})
yields that for any $p \in (p_{0},q_{0})$ and weight $w \in A_{\frac{p}{p_{0}}} \cap RH_{(\frac{q_{0}}{p})'}$,
\begin{equation}
  \label{eqtn:Cor1.8Beta}
  \norm{S f}_{L^{p}(w)} \lesssim
  \brs{w^{(\frac{q_{0}}{p})'}}^{\beta(p,\mathfrak{p})/(2 q_{0}^{*})}_{A_{\phi(p)}}
\end{equation}
where $\beta(p,\mathfrak{p}) = \max\br{1, \frac{(q_{0} - p)(\mathfrak{p} - p_{0})}{(q_{0} - \mathfrak{p})(p - p_{0})}}$.

We check that this matches the power $\gamma(p)$ in \cref{cor:Weighted}.
Let $\omega(p) \coloneqq (q_{0} - p)/(p - p_{0})$ for $p\in (p_0,q_0)$.
Then $\beta(p,\mathfrak{p}) = \max( 1, \omega(p) /\omega(\mathfrak{p}) )$.
  Since $\omega(p)$ is decreasing in $p$ and $\beta(\mathfrak{p},\mathfrak{p}) = 1$,
  then $\beta(p,\mathfrak{p}) = 1$ for $p \in [\mathfrak{p},q_0)$.
  In this range of exponent we have
  \begin{equation*}
    \norm{S f}_{L^{p}(w)} \lesssim
    \brs{w^{(\frac{q_{0}}{p})'}}^{1/(2 q_{0}^{*})}_{A_{\phi(p)}}
    \lesssim \br{\brs{w}_{A_{\frac{p}{p_{0}}}} \cdot
      \brs{w}_{RH_{(\frac{q_{0}}{p})'}}}^{(\frac{q_{0}}{p})'/(2 q_{0}^{*})},
  \end{equation*}
  where the last inequality is the bound on the weight characteristic
  given in \eqref{eqtn:WeightProperty}.

When $p < \mathfrak{p}$, instead $\beta(p,\mathfrak{p}) = \omega(p)/\omega(\mathfrak{p})$.
Using the identity \eqref{eqtn:Critical} for $\mathfrak{p}$,
one can see that $\omega(\mathfrak{p}) \cdot 2q_0^* = q_0$.
This immediately gives
\begin{equation*}
  \frac{\beta(p,\mathfrak{p})}{2q_0^*} = \frac{\omega(p)}{\omega(\mathfrak{p}) \cdot 2q_0^*} = \frac{\omega(p)}{q_0} = \frac{1}{\br{q_0/p}'} \frac{1}{p-p_0}.
\end{equation*}
Then \eqref{eqtn:Cor1.8Beta} followed by \eqref{eqtn:WeightProperty} implies that for $p \in (p_0,\mathfrak{p})$
\begin{equation*}
  \norm{S f}_{L^{p}(w)} \lesssim
  \brs{w^{(\frac{q_{0}}{p})'}}^{\frac{1}{\br{q_0/p}'} \frac{1}{p-p_0}}_{A_{\phi(p)}}
  \lesssim \br{\brs{w}_{A_{\frac{p}{p_{0}}}} \cdot
    \brs{w}_{RH_{(\frac{q_{0}}{p})'}}}^{1/(p-p_0)}
\end{equation*}
The exponent in the above inequality matches the hypothesised exponent of
\eqref{eqtn:WeightProperty}, allowing us to conclude our proof. \qed

\section{Sharpness of the sparse form for
  \texorpdfstring{$p>2$}{p>2}}
\label{sec:Sharpness}
In this section we will use the notation $\sim$ to indicate asymptotic behaviour
and we will work in $\mathbb{R}$ with the Lebesgue measure. 
The sharpness in \cref{thm:weight_bounds} is a consequence of the following proposition.
The proof, although different, follows the reasoning in \cite[\S 7]{bernicot2016sharp}.

\begin{proposition}
  For $p\in (2,q_0)$, there exists a sparse collection $\mathcal S$ and for every $0<\epsilon<1$,
  there exist sequences of functions $f_\epsilon$ and $g_\epsilon$ and weights $w_\epsilon$ such that
  \begin{equation}\label{asymp}
    \sum_{P \in \mathcal{S}} \left( \fint_P \abs{f_\epsilon}^{p_0} \D{x}\right)^{2/p_0} \left( \fint_P \abs{g_\epsilon}^{q_{0}^{*}} \D{x} \right)^{1/q_{0}^{*}} \abs{P}
    \sim \br{\brs{w_\epsilon}_{A_{\frac{p}{p_{0}}}} \cdot \brs{w_\epsilon}_{RH_{(\frac{q_{0}}{p})'}}}^{2\gamma(p)} \norm{f_\epsilon}_{L^{p}(w_\epsilon)}^{2} \norm{g_{\epsilon}}_{L^{(p/2)'}(\sigma_\epsilon)}
  \end{equation}
  as $\epsilon \to 0$, where  
  \begin{equation*}
    \gamma(p) \coloneqq \max \br{\frac{1}{p - p_{0}}, \br{\frac{q_{0}}{p}}' \frac{1}{2q_{0}^{*}}} \,\,\, \text{ and } \,\,\, \sigma_\epsilon\coloneqq w_{\epsilon}^{1-(p/2)'}.
  \end{equation*}
\end{proposition}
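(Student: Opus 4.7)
The plan is to exhibit explicit extremal sequences on $\mathbb{R}$ with Lebesgue measure, adapting the approach of \cite[\S 7]{bernicot2016sharp} to our quadratic setting. I first fix the sparse family
\begin{equation*}
  \mathcal{S} \coloneqq \{P_k : k\geq 0\}, \qquad P_k \coloneqq [0, 2^{-k}],
\end{equation*}
which is $\tfrac12$-sparse with pairwise disjoint major subsets $F_{P_k} \coloneqq P_k \setminus P_{k+1}$. For the test data I take pure powers
\begin{equation*}
  f_\epsilon(x) = x^{\alpha(\epsilon)}\,\1_{(0,1)}(x),\qquad g_\epsilon(x) = x^{\beta(\epsilon)}\,\1_{(0,1)}(x),\qquad w_\epsilon(x) = |x|^{a(\epsilon)},
\end{equation*}
with exponents $\alpha,\beta,a$ chosen depending on whether $p \ge \mathfrak{p}$ or $p \le \mathfrak{p}$, where $\mathfrak{p}$ is the critical index from \cref{rmk:CriticalExponent}.

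With these choices every quantity in \eqref{asymp} is elementary. The averages $\fint_{P_k}|f_\epsilon|^{p_0}$ and $\fint_{P_k}|g_\epsilon|^{q_0^*}$ are exact powers of $2^{-k}$, so the sparse sum reduces to a single geometric series of value
\begin{equation*}
  \frac{C}{(\alpha p_0+1)^{2/p_0}\,(\beta q_0^*+1)^{1/q_0^*}\,(2\alpha+\beta+1)},
\end{equation*}
while $\|f_\epsilon\|_{L^p(w_\epsilon)}^2$ and $\|g_\epsilon\|_{L^{p^*}(\sigma_\epsilon)}$ are beta-integrals producing singularities of the same rational type in the relevant exponents. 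The joint weight characteristic $[w_\epsilon]_{A_{p/p_0}}[w_\epsilon]_{RH_{(q_0/p)'}}$ is, as is well-known, an explicit rational function of $a$ that blows up polynomially as $a$ approaches either endpoint of the joint local-integrability interval.

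The matching then splits along the two regimes in the definition of $\gamma(p)$. For $p\geq \mathfrak{p}$ (so $\gamma(p)=(q_0/p)'/(2q_0^*)$), I push $a(\epsilon)\to (p/p_0-1)^-$, so the $A_{p/p_0}$ constant drives the blow-up, and tune $\alpha(\epsilon),\beta(\epsilon)$ so that the sparse sum exhibits a single simple pole in $\epsilon$ while $\|f_\epsilon\|_{L^p(w_\epsilon)}$ and $\|g_\epsilon\|_{L^{p^*}(\sigma_\epsilon)}$ remain of order $1$; the identity \eqref{eqtn:WeightProperty} converts the bound back into the joint $A_{p/p_0}\cap RH_{(q_0/p)'}$ form with the correct power $2\gamma(p)$. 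For $p\le \mathfrak{p}$ (so $\gamma(p)=1/(p-p_0)$), the dual construction pushes $a(\epsilon)\to -1^+$ (using \cref{lem:WeightProperties}(i) to swap the roles of $w$ and $\sigma$) and produces the analogous asymptotics driven by the reverse-Hölder factor. The main obstacle is the algebraic bookkeeping: verifying that the net $\epsilon$-powers on the two sides of \eqref{asymp} agree requires carefully tracking which rational singularity dominates, and consistency at $p = \mathfrak{p}$ follows precisely from the identity \eqref{eqtn:Critical}, ensuring that the two parameter choices glue together at the critical exponent.
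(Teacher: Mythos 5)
Your scaffolding is the same as the paper's: the lacunary sparse family $\{[0,2^{-n}]\}_{n\ge 0}$ on $\mathbb{R}$, pure power test functions and power weights, and a case split at the critical index $\mathfrak{p}$. But the assignment of which weight characteristic drives the blow-up in which regime is reversed, and this is not cosmetic --- it breaks the matching of asymptotics. In the regime $p\ge\mathfrak{p}$ one has $\gamma(p)=\br{\tfrac{q_0}{p}}'\tfrac{1}{2q_0^*}\ge\tfrac{1}{p-p_0}$, and the correct construction (as in the paper) keeps $\brs{w_\epsilon}_{A_{p/p_0}}\sim 1$ while sending $\brs{w_\epsilon}_{RH_{(q_0/p)'}}\sim\epsilon^{-1/(q_0/p)'}$, so that $\br{\brs{w_\epsilon}_{A_{p/p_0}}\brs{w_\epsilon}_{RH_{(q_0/p)'}}}^{2\gamma(p)}\sim\epsilon^{-1/q_0^*}$ exactly matches the extra $\epsilon^{-1/q_0^*}$ produced by the $g$-averages on the left of \eqref{asymp}. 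If instead you push $a(\epsilon)\to(p/p_0-1)^-$ so that the $A_{p/p_0}$ constant blows up like $\epsilon^{-(p/p_0-1)}$, the best ratio of sparse form to norms that power functions can achieve is $\epsilon^{-2/p_0}$, which certifies only the exponent $\frac{2/p_0}{p/p_0-1}=\frac{2}{p-p_0}$ on the characteristic; since $\frac{1}{p-p_0}<\gamma(p)$ strictly for $p>\mathfrak{p}$, the right-hand side of \eqref{asymp} is then asymptotically much larger than the left, and sharpness is not exhibited. The same reversal occurs in your treatment of $p\le\mathfrak{p}$: there $\gamma(p)=\tfrac{1}{p-p_0}$ and it is precisely the Muckenhoupt constant that must blow up (the paper takes $w_\epsilon=|x|^{p/p_0-1-\epsilon}$, for which $\brs{w_\epsilon}_{A_{p/p_0}}\sim\epsilon^{-(p/p_0-1)}$ and $\brs{w_\epsilon}_{RH_{(q_0/p)'}}\sim 1$); moreover, for a power weight $|x|^a$ the limit $a\to-1^+$ degrades the $A_q$ constant, not the reverse H\"{o}lder constant, so your description of that case is internally inconsistent as well.

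A secondary point: the claim that one can tune $\alpha(\epsilon),\beta(\epsilon)$ so that $\norm{f_\epsilon}_{L^p(w_\epsilon)}$ and $\norm{g_\epsilon}_{L^{p^*}(\sigma_\epsilon)}$ ``remain of order $1$'' while the sparse sum has a single simple pole is not what happens with the natural choices; in the paper's construction both norms blow up ($\epsilon^{-1/p}$ and $\epsilon^{-1/p^*}$, combining to $\epsilon^{-1}$) and the sparse sum carries both the geometric-series pole $\epsilon^{-1}$ and the normalisation singularities $\epsilon^{-2/p_0}$ or $\epsilon^{-1/q_0^*}$. This could in principle be rearranged by renormalising, but as written the bookkeeping you describe does not close. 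To repair the proof, swap the two regimes (take $w_\epsilon=|x|^{p/p_0-1-\epsilon}$ with $f_\epsilon=x^{-1/p_0+\epsilon}$, $g_\epsilon=x^{-1/p_0^*+\epsilon}$ for $2<p\le\mathfrak{p}$, and define the weight through its dual, $\sigma_\epsilon=|x|^{p^*/q_0^*-1-\epsilon}$ with $f_\epsilon=x^{-1/q_0+\epsilon}$, $g_\epsilon=x^{-1/q_0^*+\epsilon}$ for $\mathfrak{p}\le p<q_0$) and then carry out the exponent verification, which indeed hinges on the identity \eqref{eqtn:Critical} at $p=\mathfrak{p}$ as you anticipate.
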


\begin{proof}
The proof is divided into two cases, the case where $p\leq \mathfrak p$ and the case where $p\geq \mathfrak p$. In both of them, the sparse collection considered is $\mathcal S=\{I_{n}\coloneqq[0,2^{-n}]\, :\, \text{for } n\in \mathbb{N}\}$.

For $2<p\leq  \mathfrak p$ and fixed $0<\epsilon<1$, consider the functions
\begin{align*}
  f_\epsilon(x) &\coloneqq x^{-\frac{1}{p_0}+\epsilon}\chi_{[0,1]}\\
  g_\epsilon(x) &\coloneqq x^{-\frac{1}{p_0^{*}}+\epsilon}\chi_{[0,1]}\\
  w_\epsilon(x) &\coloneqq |x|^{\frac{p}{p_0}-1-\epsilon}\chi_{[0,1]}\\
  \sigma_{\epsilon}(x)&= |x|^{\left(\frac{p}{p_0}-1-\epsilon\right)(1-p^*)}\chi_{[0,1]},
\end{align*}
where $  \sigma_{\epsilon}$ is the dual weight to $  w_\epsilon$ in $A_{p/2}$.

\vspace{0.1in}

Then 
\begin{align*}
  \left( \fint_P \abs{f_\epsilon}^{p_0} \D{x} \right)^{2/p_0} & =\left(\frac{2^{\frac{n}{p_0}-n\epsilon}}{\left ( p_0\epsilon\right)^{1/p_0} }\right)^2 \sim \epsilon^{-2/p_0}2^{-2n\epsilon}2^{2n/p_0}\\
  \left( \fint_P \abs{g_\epsilon}^{q_{0}^{*}} \D{x} \right)^{1/q_{0}^{*}} & \sim \frac{2^{\frac{n}{p_0^{*}}-n\epsilon}}{\left(1-\frac{q_{0}^{*}}{p_{0}^{*}}+q_{0}^{*}\epsilon \right)^{1/q_{0}^{*}}}\sim 2^{-n\epsilon} 2^{n/p_0^{*}}.
\end{align*}
as $\epsilon\to 0$. The left hand side of \eqref{asymp} follows the asymptotic behaviour
\begin{equation*}%
 \sum_{P \in \mathcal{S}} \left( \fint_P \abs{f_\epsilon}^{p_0} \D{x} \right)^{2/p_0} \left( \fint_P \abs{g_\epsilon}^{q_{0}^{*}} \D{x} \right)^{1/q_{0}^{*}} \abs{P} \sim \epsilon^{-2/p_0}\sum_{n=0}^{\infty}2^{-3n\epsilon}\sim \epsilon^{-2/p_0} \epsilon^{-1}.
\end{equation*}

For power weights, the asymptotics of the $A_p$ and $RH_q$ are well understood,
see for instance \cite{MR1124164}. Therefore, as $\epsilon \to 0$, we have

\begin{align*}
  &\brs{w_\epsilon}_{A_{\frac{p}{p_{0}}}} \sim \epsilon^{-\left(\frac{p}{p_0}-1\right)},\\
  &\brs{w_\epsilon}_{RH_{(\frac{q_{0}}{p})'}} \sim 1,
\end{align*}

Moreover we compute

\begin{align*}
\norm{f_\epsilon}_{L^{p}(w_\epsilon)} &= \left(\int_{0}^{1} x^{\frac{-p}{p_0}+p\epsilon} x^{\frac{p}{p_0}-1-\epsilon} \D{x}\right)^{1/p}\\
  &=\left(\int_{0}^{1} x^{-1+(p-1)\epsilon} \D{x}\right)^{1/p}\sim \epsilon^{-1/p},\\
\end{align*}
as $\epsilon \to 0$. 

\begin{align*}
  \norm{g_{\epsilon}}_{L^{(p/2)'}(\sigma_\epsilon)}&= \left(\int_{0}^{1} x^{\frac{-p^{*}}{p_0^{*}}+p^{*}\epsilon} x^{\left(\frac{p}{p_0}-1-\epsilon\right) \left( 1-p^{*}\right)}  \D{x}\right)^{1/p^{*}} \\
                                               &= \left(\int_{0}^{1} x^{-1+(2p^{*}-1)\epsilon-\frac{p^{*}}{p_0^{*}}+\frac{p}{p_0}-\frac{pp^{*}}{p_0}+p^{*}}  \D{x}\right)^{1/p^{*}}.
\end{align*}
Using the definition of $p^{*}$ and $p_{0}^{*}$, we note $ -\frac{p^{*}}{p_0^{*}}+\frac{p}{p_0}-\frac{pp^{*}}{p_0}+p^{*}=0$, therefore
$$
\norm{g_{\epsilon}}_{L^{(p/2)'}(\sigma_\epsilon)}= \left(\int_{0}^{1} x^{-1+(2p^{*}-1)\epsilon} \D{x}\right)^{1/p^{*}}\sim \epsilon^{-1/p^{*}},
$$
as $\epsilon \to 0$. 

We conclude that the right hand side of \eqref{asymp}
behaves as $\epsilon^{-\left(\frac{p}{p_0}-1\right)\left(\frac{2}{p-p_0}\right)}\epsilon^{-2/p} \epsilon^{-1/p^{*}}=\epsilon^{-1}\epsilon^{-2/p_0}$ as $\epsilon \to 0$,
which is exactly the asymptotic behaviour of the left hand side of \eqref{asymp} as desired.

\vspace{0.2in}

\noindent

For $\mathfrak p\leq p<q_0 $ and fixed $0<\epsilon<1$, consider the functions
\begin{align*}
  f_\epsilon(x) &\coloneqq x^{-\frac{1}{q_0}+\epsilon}\chi_{[0,1]}\\
  g_\epsilon(x) &\coloneqq x^{-\frac{1}{q_0^{*}}+\epsilon}\chi_{[0,1]}\\
  \sigma_\epsilon(x) &\coloneqq |x|^{\frac{p^*}{q_0^*}-1-\epsilon}\chi_{[0,1]}\\
  w_\epsilon &= |x|^{\left(\frac{p^*}{q_0^*}-1-\epsilon\right)\left(1-p/2\right)},
\end{align*}
where $  \sigma_{\epsilon}$ is the dual weight to $  w_\epsilon$ in $A_{p/2}$.

\vspace{0.1in}

Then 
\begin{align*}
  \left( \fint_P \abs{g_\epsilon}^{q_{0}^{*}} \D{x} \right)^{1/q_{0}^{*}} & = \frac{2^{\frac{n}{q_0^*}-n\epsilon}}{\left ( q_0^{*}\epsilon\right)^{1/q_0^{*}} }\sim \epsilon^{-1/ q_0^{*}}2^{-n\epsilon}2^{n/ q_0^{*}}\\
  \left( \fint_P \abs{f_\epsilon}^{p_0} \D{x} \right)^{1/p_0}  & = \frac{2^{\frac{n}{q_0}-n\epsilon}}{\left(1-\frac{p_0}{q_0}+p_0 \epsilon \right)^{1/p_{0}}}\sim 2^{-n\epsilon} 2^{n/q_0}.
\end{align*}
as $\epsilon\to 0$. And the right hand side of \eqref{asymp} follows the asymptotic behaviour
\begin{align*}%
  \sum_{P \in \mathcal{S}}  \left ( \fint_{P} \abs{f_\epsilon}^{p_0}\D{x} \right )^{2/p_0}\left ( \fint_{P}\abs{g_\epsilon}^{q_0^{*}} \D{x} \right )^{1/q_0^{*}} \abs{P}
  &\sim \epsilon^{-1/q_0^{*}}\sum_{n=0}^{\infty} 2^{-n} 2^{\frac{2n}{q_0}-2\epsilon n} 2^{-n\epsilon+ \frac{n}{q_0^{*}}} \\
  &\sim\epsilon^{-1/q_0^{*}}\sum_{n=0}^{\infty}2^{-3n\epsilon}\sim \epsilon^{-1/q_0^{*}} \epsilon^{-1}.
\end{align*}

For the power weights $w_\epsilon$, as $\epsilon \to 0$, we have,
\begin{align*}
&\brs{w_\epsilon}_{A_{\frac{p}{p_{0}}}} \sim 1,\\
&\brs{w_\epsilon}_{RH_{(\frac{q_{0}}{p})'}} \sim \epsilon^{\frac{-1}{(q_0/p)'}}.
\end{align*}

Moreover we compute

$$
\norm{g_{\epsilon}}_{L^{(p/2)'}(\sigma_\epsilon)}=\left(\int_{0}^{1} x^{-1+(p^*-1)\epsilon} \D{x}\right)^{1/p^{*}}\sim \epsilon^{-1/p^{*}},
$$
as $\epsilon \to 0$, and

\begin{align*}
\norm{f_\epsilon}_{L^{p}(w_\epsilon)}& = \left(\int_{0}^{1}x^{\frac{-p}{q_0}+\epsilon p}x^{\left( \frac{p^{*}}{q_0^{*}}-1-\epsilon\right)\left(1-\frac{p}{2} \right)}\D{x}\right) ^{1/p}\\
&  = \left(\int_{0}^{1}x^{\left(\frac{3p}{2}-1\right)\epsilon-1 -\frac{p}{q_0} +\frac{p^{*}}{q_0^{*}}-\frac{pp^{*}}{2q_0^{*}} +\frac{p}{2}  }\D{x}\right) ^{1/p}.\\
\end{align*}

Using the definition of $p^{*}$ and $q_{0}^{*}$, we note $ -\frac{p}{q_0}+\frac{p^{*}}{q_0^{*}}-\frac{pp^{*}}{2q_0^{*}}+\frac{p}{2}=0$, therefore

$$
\norm{f_\epsilon}_{L^{p}(w_\epsilon)} =\left(\int_{0}^{1} x^{(3p/2-1)\epsilon-1} \D{x}\right)^{1/p}\sim \epsilon^{-1/p}.
$$

We conclude the right hand side of \eqref{asymp} behaves as $\epsilon^{-1/q_0^*}\epsilon^{-2/p} \epsilon^{-1/p^{*}}=\epsilon^{-1}\epsilon^{-1/q_0^*}$ as $\epsilon \to 0$, which is exactly the asymptotic for the left hand side of \eqref{asymp} as desired.
\end{proof}

\subsection{Upper bound on asymptotic behaviour}
\label{subsec:asymptotic_behaviour}

In this section we discuss the connection between
sharp weighted estimates for an operator $T$
and the asymptotic behaviour of its unweighted norm $\lVert T \rVert_{L^p\to L^p}$.
We recall the definition of $\gamma(q_0)$ from \cite[Definition 5.1]{FreyBas}.
Let $T$ be a bounded operator on $L^p$ for $p \in (p_0,q_0)$.
\begin{deff}
  For $q_0 < \infty$ define
  \begin{equation*}
    \gamma(q_0) \coloneqq \sup \big\{\gamma \ge 0 \,\vert\, \forall \epsilon > 0, \limsup_{p \to q_0}(q_0 - p)^{\gamma - \epsilon} \lVert T \rVert_{L^p\to L^p} = \infty \big\} ,
  \end{equation*}
  and for $q_0 = \infty$
  \begin{equation*}
    \gamma(\infty) \coloneqq \sup\big\{ \gamma \ge 0 \,\vert\, \forall \epsilon >0, \limsup_{p\to\infty} \frac{\lVert T \rVert_{L^p\to L^p}}{p^{\gamma-\epsilon}} = \infty \big\}.
  \end{equation*}
\end{deff}

We say that an operator $T$ admits
a $(p_0,q_0)$ quadratic sparse domination
if it satisfies a bound as the one in \cref{t.main}.
We have the following upper bound
on the unweighted norm of $T$.
\begin{prop}
  Let $q^* \coloneqq (q/2)'$.
  If $T$ admits a $(p_0,q_0)$ quadratic sparse domination then
  for $p>2$ we have
  \begin{equation*}
    \lVert T \rVert_{L^p\to L^p} \lesssim \left[ \left(\frac{p}{p_0}\right)'\right]^{\frac1{p_0}} \left[ \left(\frac{p^*}{q_0^*}\right)'\right]^{\frac12 \frac1{q_0^*}}
  \end{equation*}
  and in particular
  \begin{equation}\label{eq:gamma_upper_bound}
    \gamma(q_0) \le \frac1{2{q_0^*}}. %
  \end{equation}
\end{prop}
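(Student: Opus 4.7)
The plan is to dualise the $L^p$ norm squared, apply the quadratic sparse domination hypothesis, and then reduce the resulting bilinear sparse form to a product of two bounded maximal operators. Specifically, for $p>2$ we use
\[
\norm{Tf}_{L^p}^2 = \norm{(Tf)^2}_{L^{p/2}} = \sup_{\norm{g}_{L^{p^*}} = 1} \int (Tf)^2 g \D{\mu},
\]
where $p^* = (p/2)'$. By the assumed $(p_0,q_0)$ quadratic sparse domination, the last integral is controlled by
\[
c \sum_{P \in \mathcal{S}} \br{\dashint_{5P} \abs{f}^{p_0} \D{\mu}}^{2/p_0} \br{\dashint_{5P} \abs{g}^{q_0^*} \D{\mu}}^{1/q_0^*} \abs{P}.
\]

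Next I would exploit sparseness: for each $P \in \mathcal{S}$ there is a major subset $E_P \subset P$ with $\lvert P\rvert \le 2 \lvert E_P\rvert$ and the $E_P$ are pairwise disjoint. Since $E_P \subset 5P$, on each $E_P$ the two averages are bounded pointwise by $\mathcal{M}_{p_0} f$ and $\mathcal{M}_{q_0^*} g$ respectively. Therefore
\[
\sum_{P \in \mathcal{S}} \br{\dashint_{5P} \abs{f}^{p_0} \D{\mu}}^{2/p_0} \br{\dashint_{5P} \abs{g}^{q_0^*} \D{\mu}}^{1/q_0^*} \abs{P}
\lesssim \int (\mathcal{M}_{p_0} f)^2 \, (\mathcal{M}_{q_0^*} g) \D{\mu}.
\]
Hölder's inequality with exponents $p/2$ and $p^* = (p/2)'$ then gives
\[
\int (\mathcal{M}_{p_0} f)^2 \,(\mathcal{M}_{q_0^*} g) \D{\mu} \le \norm{\mathcal{M}_{p_0} f}_{L^p}^2 \, \norm{\mathcal{M}_{q_0^*} g}_{L^{p^*}}.
\]

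Now I use the sharp Hardy--Littlewood maximal inequality $\norm{\mathcal{M} h}_{L^q} \lesssim q' \norm{h}_{L^q}$ for $q>1$, applied to $h = \abs{f}^{p_0}$ at exponent $q = p/p_0$ and to $h = \abs{g}^{q_0^*}$ at $q = p^*/q_0^*$. This yields
\[
\norm{\mathcal{M}_{p_0} f}_{L^p} \lesssim \brs{(p/p_0)'}^{1/p_0} \norm{f}_{L^p}, \qquad \norm{\mathcal{M}_{q_0^*} g}_{L^{p^*}} \lesssim \brs{(p^*/q_0^*)'}^{1/q_0^*} \norm{g}_{L^{p^*}}.
\]
Substituting, taking the supremum over $g$ with $\norm{g}_{L^{p^*}} = 1$, and extracting the square root gives the stated inequality for $\norm{T}_{L^p \to L^p}$.

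For the asymptotic consequence \eqref{eq:gamma_upper_bound}, a short algebraic computation yields $p^* - q_0^* = \frac{2(q_0 - p)}{(p-2)(q_0-2)}$, so
\[
\br{\frac{p^*}{q_0^*}}' = \frac{p^*}{p^* - q_0^*} \sim \frac{C_{q_0}}{q_0 - p} \quad \text{as } p \to q_0^-,
\]
while $[(p/p_0)']^{1/p_0}$ remains bounded in this limit (for $q_0<\infty$). Thus $\norm{T}_{L^p \to L^p} \lesssim (q_0 - p)^{-1/(2q_0^*)}$, and by the definition of $\gamma(q_0)$ we conclude $\gamma(q_0) \le 1/(2q_0^*)$. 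The case $q_0 = \infty$ is analogous, using $q_0^* = 1$ and the identity $(p^*)' = p/2$ to obtain a growth of order $p^{1/2}$ as $p \to \infty$. There is no serious obstacle here: the argument is a standard dualisation plus Hölder, the only point requiring care being the sharp tracking of the maximal function constants $(p/p_0)'$ and $(p^*/q_0^*)'$ in order to match the claimed exponents exactly.
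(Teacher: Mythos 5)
Your proposal is correct and follows essentially the same route as the paper: the paper also bounds the sparse form by $\lVert \dyadicM_{p_0/2}(|f|^2)\rVert_{L^{p/2}}\lVert\dyadicM_{q_0^*}g\rVert_{L^{(p/2)'}}$ via the disjoint major subsets (citing Frey--Nieraeth for that step, which you spell out), and then invokes the sharp bound $\lVert\dyadicM h\rVert_{L^q}\lesssim q'\lVert h\rVert_{L^q}$ to extract exactly the constants $[(p/p_0)']^{2/p_0}$ and $[(p^*/q_0^*)']^{1/q_0^*}$. Your explicit dualisation and the asymptotic computation of $(p^*/q_0^*)'\sim C_{q_0}/(q_0-p)$ are left implicit in the paper but are correct.
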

\begin{proof}
  As in \cite[Remark 3.4]{FreyBas},
  let $\mathcal{S}$ be a $\eta$-sparse family.
  For $p>2$ we have
  \begin{align*}
    \sum_{P \in \mathcal{S}} \left(\fint_P \lvert f\rvert^{p_0} \D{\mu}\right)^{2/p_0} \left(\fint_P \lvert g\rvert^{q_0^*} \D{\mu} \right)^{1/q_0^*} \lvert P \rvert
    & \lesssim \frac1{\eta} \lVert \dyadicM_{\frac{p_{0}}2}( \lvert f\rvert^2) \rVert_{L^{p/2}} \lVert \dyadicM_{q_0^*} g \rVert_{L^{(p/2)'}} \\
    & \lesssim \frac1{\eta}  \left[ \left(\frac{p}{p_0}\right)'\right]^{\frac2{p_0}}
      \left[\left(\frac{p^*}{q_0^*}\right)'\right]^{\frac1{q_0^*}}
      \lVert f \rVert_{L^{p}}^2 \lVert  g \rVert_{L^{p^*}} 
  \end{align*}
  where the last inequality follows from
  the bound on the $L^p$-norm of $\dyadicM$ in \eqref{eq:bound_M_w}, since
  \begin{align*}
      \lVert \dyadicM_{\frac{p_0}2}(\lvert f\rvert^2 )\rVert_{L^{p/2}} = \lVert \dyadicM (\lvert f\rvert^{p_0}) \rVert_{L^{p/p_0}}^{2/p_0} .
  \end{align*}

\end{proof}

\begin{rmk}  
  The upper bound on $\gamma(q_0)$ in \eqref{eq:gamma_upper_bound}
  implies that, if $\gamma(q_0)$ equals $1/(2q_0^*)$
  then the weighted estimates \eqref{eqtn:Bounded} are sharp.

\end{rmk}

\begin{bibsection}
\begin{biblist}

\bib{auscher2004riesz}{article}{
  AUTHOR = {Auscher, Pascal},
  author= {Coulhon, Thierry},
  author= {Duong, Xuan Thinh},
  author ={Hofmann, Steve},
  TITLE = {Riesz transform on manifolds and heat kernel regularity},
  JOURNAL = {Ann. Sci. \'{E}cole Norm. Sup. (4)},
  FJOURNAL = {Annales Scientifiques de l'\'{E}cole Normale Sup\'{e}rieure. Quatri\`eme
    S\'{e}rie},
  VOLUME = {37},
  YEAR = {2004},
  NUMBER = {6},
  PAGES = {911--957},
  ISSN = {0012-9593},
  MRCLASS = {58J35 (35B65 35K05 42B20)},
  MRNUMBER = {2119242},
  MRREVIEWER = {Hong-Quan Li},
  DOI = {10.1016/j.ansens.2004.10.003},
  URL = {https://doi-org.virtual.anu.edu.au/10.1016/j.ansens.2004.10.003},
}

\bib{auscher2007necessary}{article}{ %
   author={Auscher, Pascal},
   title={On necessary and sufficient conditions for $L^p$-estimates of
   Riesz transforms associated to elliptic operators on $\mathbb{R}^n$ and
   related estimates},
   journal={Mem. Amer. Math. Soc.},
   volume={186},
   date={2007},
   number={871},
   pages={xviii+75},
}

\bib{AuscherMartellIGeneral}{article}{ %
   author={Auscher, Pascal},
   author={Martell, Jos\'{e} Mar\'{\i}a},
   title={Weighted norm inequalities, off-diagonal estimates and elliptic
   operators. I. General operator theory and weights},
   journal={Adv. Math.},
   volume={212},
   date={2007},
   number={1},
   pages={225--276},
}	

\bib{AuscherMartellII}{article}{ 
   author={Auscher, Pascal},
   author={Martell, Jos\'{e} Mar\'{\i}a},
   title={Weighted norm inequalities, off-diagonal estimates and elliptic
   operators. {II}. {O}ff-diagonal estimates on spaces of homogeneous type},
   journal={J. Evol. Equ.},
   volume={7},
   date={2007},
   number={2},
   pages={265--316},
}

\bib{AuscherMartellIII}{article}{ %
   author={Auscher, Pascal},
   author={Martell, Jos\'{e} Mar\'{\i}a},
   title={Weighted norm inequalities, off-diagonal estimates and elliptic
   operators. III. Harmonic analysis of elliptic operators},
   journal={J. Funct. Anal.},
   volume={241},
   date={2006},
   number={2},
   pages={703--746},
 }

 \bib{AuscherMartellIV}{article}{
   AUTHOR = {Auscher, Pascal},
   author= {Martell, Jos\'{e} Mar\'{\i}a},
   TITLE = {Weighted norm inequalities, off-diagonal estimates and
              elliptic operators. {IV}. {R}iesz transforms on manifolds and
              weights},
   JOURNAL = {Math. Z.},
  FJOURNAL = {Mathematische Zeitschrift},
    VOLUME = {260},
      YEAR = {2008},
    NUMBER = {3},
     PAGES = {527--539},
      ISSN = {0025-5874},
   MRCLASS = {58J35 (35J15 42B20)},
  MRNUMBER = {2434468},
MRREVIEWER = {Caroline P. Sweezy},
       DOI = {10.1007/s00209-007-0286-1},
       URL = {https://doi-org.virtual.anu.edu.au/10.1007/s00209-007-0286-1},
}

\bib{MR4094458}{article}{
   author={Beltran, David},
   author={Cladek, Laura},
   title={Sparse bounds for pseudodifferential operators},
   journal={J. Anal. Math.},
   volume={140},
   date={2020},
   number={1},
   pages={89--116},
}

\bib{bernicot2016sharp}{article}{ %
   author={Bernicot, Fr\'{e}d\'{e}ric},
   author={Frey, Dorothee},
   author={Petermichl, Stefanie},
   title={Sharp weighted norm estimates beyond Calder\'{o}n--Zygmund theory},
   journal={Anal. PDE},
   volume={9},
   date={2016},
   number={5},
   pages={1079--1113},
 }

\bib{Brocchi}{article}{
   author={Brocchi, Gianmarco},
   title={A sparse quadratic ${T}1$ theorem},
   journal={arXiv preprint arXiv:2004.05365},
   year={2020}
}

\bib{MR1124164}{article}{
   author={Buckley, Stephen M.},
   title={Estimates for operator norms on weighted spaces and reverse Jensen
   inequalities},
   journal={Trans. Amer. Math. Soc.},
   volume={340},
   date={1993},
   number={1},
   pages={253--272},

}

\bib{MR4058541}{article}{
   author={Bui, The Anh},
   author={Duong, Xuan Thinh},
   title={Sharp weighted estimates for square functions associated to
   operators on spaces of homogeneous type},
   journal={J. Geom. Anal.},
   volume={30},
   date={2020},
   number={1},
   pages={874--900},
}

\bib{coulhon1999riesz}{article}{
  AUTHOR = {Coulhon, Thierry},
  author= {Duong, Xuan Thinh},
  TITLE = {Riesz transforms for {$1\leq p\leq 2$}},
  JOURNAL = {Trans. Amer. Math. Soc.},
  FJOURNAL = {Transactions of the American Mathematical Society},
  VOLUME = {351},
  YEAR = {1999},
  NUMBER = {3},
  PAGES = {1151--1169},
  ISSN = {0002-9947},
  MRCLASS = {58G11 (31C12 53C21 60J35)},
  MRNUMBER = {1458299},
  MRREVIEWER = {Zhongmin Qian},
  DOI = {10.1090/S0002-9947-99-02090-5},
  URL = {https://doi-org.virtual.anu.edu.au/10.1090/S0002-9947-99-02090-5},
}

\bib{MR3668591}{article}{
   author={Conde-Alonso, Jos\'{e} M.},
   author={Culiuc, Amalia},
   author={Di Plinio, Francesco},
   author={Ou, Yumeng},
   title={A sparse domination principle for rough singular integrals},
   journal={Anal. PDE},
   volume={10},
   date={2017},
   number={5},
   pages={1255--1284},
}

\bib{MR3521084}{article}{
   author={Conde-Alonso, Jos\'{e} M.},
   author={Rey, Guillermo},
   title={A pointwise estimate for positive dyadic shifts and some
   applications},
   journal={Math. Ann.},
   volume={365},
   date={2016},
   number={3-4},
   pages={1111--1135},
}

 \bib{CoifWeiss}{book}{ %
   AUTHOR = {Coifman, Ronald R.},
   author = {Weiss, Guido},
   TITLE = {Analyse harmonique non-commutative sur certains espaces
     homog\`enes},
   SERIES = {Lecture Notes in Mathematics, Vol. 242},
   NOTE = {\'{E}tude de certaines int\'{e}grales singuli\`eres},
   PUBLISHER = {Springer-Verlag, Berlin-New York},
   YEAR = {1971},
   PAGES = {v+160},
   MRCLASS = {43A85 (22E30)},
   MRNUMBER = {0499948},
 }

 \bib{chen2018endpoint}{article}{
   title={Endpoint sparse bounds for Walsh-Fourier multipliers of Marcinkiewicz type},
   author={Chen, Wei},
   author={Culiuc, Amalia},
   author={Di Plinio, Francesco},
   author={Lacey, Michael T.},
   author={Ou, Yumeng},
   journal={arXiv preprint arXiv:1805.06060},
   year={2018}
 }

 \bib{LPSmanifold}{article}{ %
   AUTHOR = {Coulhon, Thierry},
   author = {Duong, Xuan Thinh},
   author = {Li, Xiang Dong},
   TITLE = {Littlewood-{P}aley-{S}tein functions on complete {R}iemannian
     manifolds for {$1\leq p\leq 2$}},
   JOURNAL = {Studia Math.},
   FJOURNAL = {Studia Mathematica},
   VOLUME = {154},
   YEAR = {2003},
   NUMBER = {1},
   PAGES = {37--57},
   ISSN = {0039-3223},
   MRCLASS = {42B25 (58J35)},
   MRNUMBER = {1949048},
   MRREVIEWER = {Bianca Stroffolini},
   DOI = {10.4064/sm154-1-4},
 }

 \bib{MR3625161}{article}{ %
   AUTHOR = {Chen, Li},
   author = {Coulhon, Thierry},
   author = {Feneuil, Joseph},
   author = {Russ, Emmanuel},
   TITLE = {Riesz transform for {$1\le p\le 2$} without {G}aussian heat
     kernel bound},
   JOURNAL = {J. Geom. Anal.},
   FJOURNAL = {Journal of Geometric Analysis},
   VOLUME = {27},
   YEAR = {2017},
   NUMBER = {2},
   PAGES = {1489--1514},
   ISSN = {1050-6926},
   MRCLASS = {58J35 (42B20)},
   MRNUMBER = {3625161},
   MRREVIEWER = {Meng Wang},
   DOI = {10.1007/s12220-016-9728-5},
 }

 \bib{cruz2004extrapolation}{article}{
   title={Extrapolation from $A_\infty$ weights and applications},
   author={Cruz Uribe, David},
   author={Martell, Jos{\'e} Mar{\'\i}a},
   author={P{\'e}rez, Carlos},
   journal={Journal of Functional Analysis, 213 (2), 412-439.},
   year={2004},
   publisher={Elsevier}
 }

\bib{MR3291794}{article}{
   author={Di Plinio, Francesco},
   author={Lerner, Andrei K.},
   title={On weighted norm inequalities for the Carleson and Walsh-Carleson
   operator},
   journal={J. Lond. Math. Soc. (2)},
   volume={90},
   date={2014},
   number={3},
   pages={654--674},
}

\bib{DHL}{article}{
  author={Di Plinio, F.},
  author={Hyt\"onen, T.},
  author= {Li, Kangwei},
  title={Sparse bounds for maximal rough singular integrals via the Fourier transform},
  date={2017},
  eprint = { https://arxiv.org/abs/1706.09064},
  journal = {Submitted},
}

 \bib{Duo}{book}{ %
   AUTHOR = {Duoandikoetxea, Javier},
   TITLE = {Fourier analysis},
   SERIES = {Graduate Studies in Mathematics},
   VOLUME = {29},
   NOTE = {Translated and revised from the 1995 Spanish original by David
     Cruz-Uribe},
   PUBLISHER = {American Mathematical Society, Providence, RI},
   YEAR = {2001},
   PAGES = {xviii+222},
   ISBN = {0-8218-2172-5},
   MRCLASS = {42-01},
   MRNUMBER = {1800316},
   MRREVIEWER = {Loukas Grafakos},
 }

\bib{FreyBas}{article}{         %
   author={Frey, Dorothee},
   author={Nieraeth, Bas},
   title={Weak and strong type $A_1-A_\infty$ estimates for sparsely
   dominated operators},
   journal={J. Geom. Anal.},
   volume={29},
   date={2019},
   number={1},
   pages={247--282},
}

 \bib{GrafakosClassical}{book}{ %
   AUTHOR = {Grafakos, Loukas},
   TITLE = {Classical {F}ourier analysis},
   SERIES = {Graduate Texts in Mathematics},
   VOLUME = {249},
   EDITION = {Third},
   PUBLISHER = {Springer, New York},
   YEAR = {2014},
   PAGES = {xviii+638},
   MRCLASS = {42-01 (42Bxx)},
   MRNUMBER = {3243734},
   MRREVIEWER = {Atanas G. Stefanov},
   DOI = {10.1007/978-1-4939-1194-3},
 }

\bib{MR2912709}{article}{
   author={Hyt{\"o}nen, T. P.},
   title={The sharp weighted bound for general Calder\'on--Zygmund operators},
   journal={Ann. of Math. (2)},
   volume={175},
   date={2012},
   number={3},
   pages={1473--1506},
}

 \bib{HytKairema}{article}{%
   AUTHOR = {Hyt\"{o}nen, Tuomas},
   author= {Kairema, Anna},
   TITLE = {Systems of dyadic cubes in a doubling metric space},
   JOURNAL = {Colloq. Math.},
   FJOURNAL = {Colloquium Mathematicum},
   VOLUME = {126},
   YEAR = {2012},
   NUMBER = {1},
   PAGES = {1--33},
   ISSN = {0010-1354},
   MRCLASS = {42B25 (60D05)},
   MRNUMBER = {2901199},
   MRREVIEWER = {Raymond H. Cox},
 }

\bib{MR3625128}{article}{
   author={Hyt\"{o}nen, Tuomas P.},
   author={Roncal, Luz},
   author={Tapiola, Olli},
   title={Quantitative weighted estimates for rough homogeneous singular integrals},
   journal={Israel J. Math.},
   volume={218},
   date={2017},
   number={1},
   pages={133--164},
}

\bib{Johnson1991}{article}{ %
   author={Johnson, R.},
   author={Neugebauer, C. J.},
   title={Change of variable results for $A_p$- and reverse H\"{o}lder ${\rm
   RH}_r$-classes},
   journal={Trans. Amer. Math. Soc.},
   volume={328},
   date={1991},
   number={2},
   pages={639--666},
}

\bib{MR3285858}{article}{
   author={Lacey, M. T.},
   title={Two-weight inequality for the Hilbert transform: a real variable characterization, II},
   journal={Duke Math. J.},
   volume={163},
   date={2014},
   number={15},
   pages={2821--2840},
}

\bib{MR3625108}{article}{
   author={Lacey, Michael T.},
   title={An elementary proof of the $A_2$ bound},
   journal={Israel J. Math.},
   volume={217},
   date={2017},
   number={1},
   pages={181--195},
}

\bib{LMR}{article}{
  author={Lacey, M. T.},
  author={Mena Arias, D.},
  author= {Reguera, M.C.},
  title={Sparse bounds for Bochner-Riesz multipliers},
  journal={Journal of Fourier Analysis and Applications},
  volume={25},
  number={2},
  pages={523--537},
  year={2019},
  publisher={Springer}
}
		
\bib{MR3285857}{article}{  
   author={Lacey, M. T.},
   author={Sawyer, E. T.},
   author={Shen, C-Y.},
   author={Uriarte-Tuero, I.},
   title={Two-weight inequality for the Hilbert transform: a real variable
   characterization, I},
   journal={Duke Math. J.},
   volume={163},
   date={2014},
   number={15},
   pages={2795--2820},  
}

\bib{MR2721744}{article}{
   author={Lerner, Andrei K.},
   title={A pointwise estimate for the local sharp maximal function with
   applications to singular integrals},
   journal={Bull. Lond. Math. Soc.},
   volume={42},
   date={2010},
   number={5},
   pages={843--856},
}

\bib{MR2770437}{article}{
   author={Lerner, Andrei K.},
   title={Sharp weighted norm inequalities for Littlewood-Paley operators
   and singular integrals},
   journal={Adv. Math.},
   volume={226},
   date={2011},
   number={5},
   pages={3912--3926},
}

\bib{MR3085756}{article}{
   author={Lerner, Andrei K.},
   title={A simple proof of the $A_2$ conjecture},
   journal={Int. Math. Res. Not. IMRN},
   date={2013},
   number={14},
   pages={3159--3170},
}

\bib{MR3484688}{article}{
   author={Lerner, Andrei K.},
   title={On pointwise estimates involving sparse operators},
   journal={New York J. Math.},
   volume={22},
   date={2016},
   pages={341--349},
}

\bib{MR4007575}{article}{
   author={Lerner, Andrei K.},
   author={Nazarov, Fedor},
   title={Intuitive dyadic calculus: the basics},
   journal={Expo. Math.},
   volume={37},
   date={2019},
   number={3},
   pages={225--265},
}

\bib{Lorist}{article}{
  author={Lorist, Emiel},
  title={The pointwise $\ell^r$-sparse domination ina space of homogeneous type },
  date={2020},
  eprint = { https://arxiv.org/abs/1907.00690},
}

 \bib{zbMATH07215908}{collection}{
   Author = {{Pereyra}, Mar\'{\i}a Cristina},
   Title = {{Dyadic harmonic analysis and weighted inequalities: the sparse revolution.}},
   BookTitle = {{New trends in applied harmonic analysis. Volume 2. Harmonic analysis, geometric measure theory, and applications. Collected papers based on courses given at the 2017 CIMPA school, Buenos Aires, Argentina, July 31 -- August 11, 2017}},
   ISBN = {978-3-030-32352-3/hbk; 978-3-030-32353-0/ebook},
   Pages = {159--239},
   Year = {2019},
   Publisher = {Cham: Birkh\"auser},
   Language = {English},
   MSC2010 = {42B 42C 28C}
 }

\bib{MR2799801}{article}{
   author={Reguera, Maria Carmen},
   title={On Muckenhoupt-Wheeden conjecture},
   journal={Adv. Math.},
   volume={227},
   date={2011},
   number={4},
   pages={1436--1450},
 }

\end{biblist}
\end{bibsection}

\end{document}